\newcommand{\C}{\mathbb{C}}
\newcommand{\F}{\mathbb{F}}
\newcommand{\G}{\mathbb{G}}
\newcommand{\Q}{\mathbb{Q}}
\newcommand{\R}{\mathbb{R}}
\newcommand{\Z}{\mathbb{Z}}
\newcommand{\Aut}{\mathrm{Aut}}
\newcommand{\Card}{\mathrm{Card\,}}
\newcommand{\Cl}{\mathrm{Cl}}
\newcommand{\Coker}{\mathrm{Coker}}
\newcommand{\cp}{\mathrm{cp}}
\newcommand{\Der}{\mathrm{Der}}
\newcommand{\End}{\mathrm{End}}
\newcommand{\fcp}{\mathrm{fcp}}
\newcommand{\Gal}{{\mathrm {Gal}}}
\newcommand{\Hom}{{\mathrm {Hom}}}
\newcommand{\he}{{\mathrm{ht}}}
\newcommand{\Image}{\mathrm{Im}}
\newcommand{\Ker}{\mathrm{Ker}}
\newcommand{\lcm}{\mathrm{lcm}}
\newcommand{\Lie}{\mathrm{Lie}}
\newcommand{\Max}{\mathrm{Max}}
\newcommand{\Min}{\mathrm{Min}}
\newcommand{\N}{\mathrm{N}}
\newcommand{\rk}{\mathrm{rk}}
\newcommand{\Sol}{{\mathrm {Sol}}\,}
\newcommand{\Spec}{{\mathrm {Spec}}\,}
\renewcommand{\mod}{\,\mathrm{mod}\,}
\newtheorem{thm}{Theorem}
\newtheorem*{Main1}{Theorem \ref{Main1}}
\newtheorem*{Main2}{Theorem \ref{Main2}}
\newtheorem*{Main3}{Theorem \ref{Main3}}
\newtheorem*{corAnosov}{Corollary \ref{Anosov}}
\newtheorem*{corAffine}{Corollary \ref{Affine}}
\newtheorem*{corNonaffine}{Corollary \ref{Nonaffine}}
\newtheorem*{Benoistthm}{Benoist's Theorem}
\newtheorem*{Malcevthm}{Malcev's Theorem}
\newtheorem*{Malcevcor}{Malcev's Corollary}
\newtheorem*{Manningthm}{Manning's Theorem}
\newtheorem{prop}[thm]{Proposition}
\newtheorem{lemma}{Lemma}
\newtheorem{cor}[thm]{Corollary}
\newcommand{\fa}{\mathfrak{a}}
\newcommand{\fm}{\mathfrak{m}}
\newcommand{\fn}{\mathfrak{n}}
\newcommand{\fz}{\mathfrak{z}}
\begin{document}
\title{Which Nilpotent Groups are Self-Similar?
\footnote{Research supported 
by CNRS-UMR 5028
and Labex MILYON/ANR-10-LABX-0070.}}

\author [$\dag$]{Olivier Mathieu
\footnote{Institut Camille Jordan, 
Universit\'e de Lyon. Email:
mathieu@math.univ-lyon1.fr}}

\maketitle

\begin{abstract} 
Let $\Gamma$ be a finitely generated torsion free nilpotent group, and let $A^\omega$ be the  space of infinite words over 
a finite alphabet $A$. We investigate two types of
self-similar actions of $\Gamma$ on $A^\omega$, namely the
faithfull actions with  dense orbits and the free actions.
A criterion for the existence  of a self-similar action of each type is established.

Two corollaries about the nilmanifolds are deduced.
The first involves the nilmanifolds 
endowed with an Anosov diffeomorphism, and  the
second about the existence of an affine structure.

Then we investigate the virtual actions of $\Gamma$, i.e. actions of a subgroup $\Gamma'$ of finite index. A formula, with some number theoretical content, is found for 
the  minimal cardinal of an alphabet $A$ endowed
with a virtual self-similar action on $A^\omega$ of each type.
\end{abstract}

\author{{\bf Mathematics Subject Classification}
37B10-20G30-53C30}


\vskip1cm
\section*{Introduction}

\noindent {\it 1. General introduction}

\noindent Let $A$ be a finite alphabet and 
let $A^\omega$ be the topological space of infinite words
$a_1a_2\dots$ over $A$, where the topology  of
$A^\omega=\varprojlim A^n$ is the pro-finite topology.

An {\it action} of a group $\Gamma$ on  $A^\omega$  is called 
{\it self-similar} iff for any  $\gamma\in\Gamma$ and $a\in A$  
there exists $\gamma_a\in\Gamma$ and  $b\in A$  such that

\centerline{$\gamma(aw)=b\gamma_a(w)$ for any $w\in A^\omega$.}

The group $\Gamma$ is called {\it self-similar} 
(respectively {\it densely self-similar},
respectively {\it  freely self-similar},
respectively {\it  freely densely self-similar})
if $\Gamma$ admits  a faithfull self-similar action
(respectively a faithfull  self-similar
action with dense orbits, respectively a free self-similar
action, respectively a free self-similar
action with dense orbits) 
on $A^\omega$ for some finite alphabet $A$.

Self-similar groups appeared in the early eighties,  
in the works
of R. Grigorchuk \cite{GR80} \cite{GR83} and in the joint works of
N. Gupta and S. Sidki \cite{GS83} \cite{GS84}. See also
the monography  \cite{N} for
an extensive account before 2005 and \cite{NS} \cite{BS} \cite{FKS} \cite{KS} \cite{GNZ} for more recent works.
A general question is 

\centerline{\it which  groups $\Gamma$ are 
(merely, or densely ...) self-similar?} 

This paper brings an answer for finitely generated torsion-free nilpotent groups $\Gamma$, called FGTF nilpotent groups in the sequel. 
Then we will connect the main result with topics involving differential geometry and arithmetic groups.

The  systematic study of self-similar actions of nilpotent groups started with \cite{BS1}, and the
previous question has been raised  in some talks of S. Sidki. 

\bigskip
\noindent {\it 2. The main results}

\noindent
A few definitions are now required.
A {\it grading} of a Lie algebra $\fm$ is a decomposition
$\fm=\oplus_{n\in \Z}\, \fm_n$   such  that
$[\fm_n,\fm_m]\subset\fm_{n+m}$ for all $n,\,m\in\Z$. It is called {\it special} if $\fm_0\cap\fz=0$, where
$\fz$ is the center of $\fm$. It is called
{\it very special} if $\fm_0=0$.

Let's assume now that $\Gamma$ is a
FGTF nilpotent group. 
By Malcev Theory \cite{Ma}\cite{Ra}, $\Gamma$ is 
a cocompact lattice in a unique connected, simply connected 
(or CSC in what follows) nilpotent Lie group $N$.
Let $\fn^{\R}$ be the Lie algebra of $N$
and set $\fn^{\C}=\C\otimes_{\R}\fn^{\R}$.

The main results, proved in Section 7, are the following

\begin{Main1}

The group $\Gamma$ is densely self-similar iff
the Lie algebra $\fn^\C$ admits a special grading.
\end{Main1}

\begin{Main2} The following assertions are equivalent

(i) The group $\Gamma$ is freely self-similar,

(ii) the group $\Gamma$ is freely densely self-similar, and

(iii) the Lie algebra $\fn^\C$ admits a very
special grading.
\end{Main2}

As a consequence, let's mention

\begin{corAnosov} Let $M$ be a nilmanifold endowed with
an Anosov diffeomorphism. Then there a free self-similar action
of $\pi_1(M)$ with dense orbits on $A^\omega$, for some finite $A$.
\end{corAnosov}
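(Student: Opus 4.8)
The plan is to reduce everything to Theorem~\ref{Main2}. By the equivalence (iii)$\Leftrightarrow$(ii) there, it is enough to prove that if the nilmanifold $M=N/\Gamma$ admits an Anosov diffeomorphism, then $\fn^\C$ carries a very special grading; Theorem~\ref{Main2} then produces a free self-similar action of $\Gamma=\pi_1(M)$ with dense orbits on $A^\omega$ for some finite $A$, which is exactly the assertion. (If $\fn=0$ there is nothing to prove, so assume $\fn\ne 0$.)

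The first step is the dynamical input. By Manning's Theorem, an Anosov diffeomorphism of the nilmanifold $N/\Gamma$ is topologically conjugate to the diffeomorphism induced by a Lie group automorphism $\Phi$ of $N$ preserving $\Gamma$, and Anosovness forces its differential $\phi:=d\Phi_e\in\Aut(\fn^\R)$ to be \emph{hyperbolic}: no eigenvalue of $\phi$ has absolute value $1$. Extending scalars gives a hyperbolic automorphism $\phi_\C\in\Aut(\fn^\C)$, with the same eigenvalues.

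Next, the very special grading is extracted from $\phi_\C$ by linear algebra. Let $S\in\Aut(\fn^\C)$ be the semisimple part of the multiplicative Jordan decomposition of $\phi_\C$ (again an algebra automorphism, since $\Aut(\fn^\C)$ is an algebraic group), and let $\fn^\C=\bigoplus_\lambda\fn^\C_\lambda$ be its eigenspace decomposition; because $S$ is an algebra automorphism, $[\fn^\C_\lambda,\fn^\C_\mu]\subseteq\fn^\C_{\lambda\mu}$. Writing $G\le\C^\times$ for the finitely generated abelian subgroup generated by the eigenvalues of $\phi$, the decomposition $\fn^\C=\bigoplus_{g\in G}\fn^\C_g$ (with $\fn^\C_g=0$ unless $g$ is an eigenvalue) is a $G$-grading. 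I would then choose a group homomorphism $\psi\colon G\to\Z$ with $\psi(\lambda)\ne0$ for every eigenvalue $\lambda$ and set $\fm_n:=\bigoplus_{\psi(g)=n}\fn^\C_g$: this is a $\Z$-grading with $[\fm_n,\fm_m]\subseteq\fm_{n+m}$, and $\fm_0=0$ precisely because no eigenvalue lies in $\ker\psi$, so it is very special. Such a $\psi$ exists because hyperbolicity means $|\lambda|\ne1$, hence no eigenvalue is a root of unity, so each has infinite order in $G$ and therefore nonzero image in the free abelian group $G/G_{\mathrm{tors}}\cong\Z^r$ with $r\ge1$; and $\Hom(\Z^r,\Z)$ is not exhausted by the finitely many proper subgroups $\{\psi_0:\psi_0(\bar\lambda)=0\}$ (tensor with $\Q$ and use that a $\Q$-vector space is not a finite union of proper subspaces), so a suitable $\psi_0$, pulled back through $G\to G/G_{\mathrm{tors}}$, does the job.

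The main obstacle is the dynamical step: deducing, from the bare existence of an Anosov diffeomorphism, a genuinely hyperbolic automorphism of the \emph{full} Lie algebra $\fn^\R$ — not merely of its abelianization — which is the content of Manning's topological-conjugacy theorem. Once that is in hand, the passage to a very special grading of $\fn^\C$ and the appeal to Theorem~\ref{Main2} are routine; in particular, no further use of the rational structure of $\fn$ is needed beyond $\Gamma$ being a lattice in $N$.
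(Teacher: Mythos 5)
Your proposal is correct and follows essentially the same route as the paper: reduce to Theorem~\ref{Main2} by extracting a very special grading of $\fn^\C$ from the spectral data of the automorphism supplied by Manning's work. The only differences are that you invoke Manning's later, stronger result (topological conjugacy to a hyperbolic automorphism), whereas the paper only needs the weaker statement that $\Spec f_*$ contains no root of unity, where $f_*$ is the automorphism of $\fn^\R$ obtained by extending the $\pi_1$-action via Lemma~\ref{extension}; and that you rederive inline what the paper cites as Lemma~\ref{root} (grading from generalized eigenspaces of the semisimple part) and Lemma~\ref{technical} (existence of the homomorphism $\psi:G\to\Z$ nonvanishing on a finite torsion-free subset), the latter with a ``$\Q$-vector space is not a finite union of proper subspaces'' argument in place of the paper's explicit base-$N$ construction.
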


\begin{corAffine} Let $M$ be a nilmanifold. If
$\pi_1(M)$ is freely  self-similar,
then $M$ is affine complete.
\end{corAffine}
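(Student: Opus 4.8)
\emph{Idea of proof.} The plan is: by Theorem \ref{Main2}, $\fn^\C$ carries a very special grading $\fn^\C=\bigoplus_{n\neq 0}\fn^\C_n$; from this I will produce a complete left-invariant affine structure on $N$, and then push it down to $M$. Recall the dictionary (Koszul, Milnor): a complete left-invariant affine structure on $N$ is the same as a complete left-symmetric (pre-Lie) product $x\cdot y$ on $\fn^\R$, meaning $x\cdot y-y\cdot x=[x,y]$, with $x\mapsto L_x$ (where $L_xy=x\cdot y$) a Lie representation, and — by Helmstetter's criterion, since $\fn^\R$ is nilpotent — $\tr R_x=0$ for all $x$, with $R_x(y)=y\cdot x$. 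Granting such a product, $N$ acts simply transitively and affinely on $V=\R^{\dim N}$; since $\Gamma$ is a cocompact lattice in $N$ (Malcev), the composite $\Gamma\hookrightarrow N\to\mathrm{Aff}(V)$ acts properly discontinuously, freely and cocompactly, so $M=\Gamma\backslash N\cong\Gamma\backslash V$ inherits a complete affine structure. Everything thus reduces to building the product on $\fn^\R$.

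Here the hypothesis enters. First I would arrange the grading to be defined over $\R$ — indeed over $\Q$. A bare very special grading of $\fn^\C$ need not descend, but the one furnished by Theorem \ref{Main2} is manufactured from the action of the \emph{discrete} group $\Gamma$ (in effect from a virtual endomorphism of $\Gamma$) and is therefore compatible with the rational structure $\fn^\Q$ attached to $\Gamma$ by Malcev theory; hence $\fn^\R=\bigoplus_{n\neq 0}\fn^\R_n$ is very specially graded. I would then produce the left-symmetric product by induction on the nilpotency class: peel off a graded central ideal $\fa$, take inductively a complete left-symmetric product on $\fn^\R/\fa$, and lift it along $0\to\fa\to\fn^\R\to\fn^\R/\fa\to 0$; the obstruction to lifting is a cohomology class every representing cocycle of which has a nonzero grading weight, so it can be killed by dividing by the weight — this is precisely the point at which $\fn_0=0$, rather than merely $\fn_0\cap\fz=0$, is indispensable, and the same bookkeeping makes Helmstetter's trace condition automatic for the resulting product. (Equivalently, one may invoke the known statement that a positively graded nilpotent Lie algebra carries a complete left-invariant affine structure, after replacing the very special grading by a positive one.)

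The analytic and geometric steps — the two dictionaries and the descent from $N$ to $M$ — are routine. The hard part is the previous paragraph: converting the purely existential ``$\fn^\C$ has a very special grading'' into an explicit complete left-symmetric product over $\R$. Two points need genuine care: (a) the descent to $\R$, which requires looking inside the proof of Theorem \ref{Main2} to read off the rationality of the grading; and (b) handling degrees of both signs rather than a positive grading — either by running the cohomological induction in that generality, or by first showing that $\fn_0=0$ already forces the existence of a positive grading. I expect (b) to be the real obstacle.
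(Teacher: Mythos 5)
Your outline is sensible, but the two points you yourself flag as ``needing genuine care'' are real gaps, and the paper's proof is built precisely so as to never confront them.

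Regarding (a): you assert that the very special grading of $\fn^\C$ furnished by Theorem~\ref{Main2} ``is manufactured from \dots a virtual endomorphism of $\Gamma$ and is therefore compatible with the rational structure $\fn^\Q$.'' That inference does not hold. The grading is extracted (Lemma~\ref{root}) from the generalized eigenspaces of some $f\in\Aut\,\fn^\Q$ together with a choice of homomorphism $L:\Lambda\to\Z$ on the multiplicative group generated by $\Spec f$; the eigenspaces live over $\overline\Q$ and $L$ has no reason to be $\Gal(\Q)$-equivariant, so the grading is in general only defined over the splitting field of a maximal torus of $\Aut\,\fn$ (Lemma~\ref{hypo}), not over $\Q$ or $\R$. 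Indeed the paper points out, for the class of Lie algebras in \S 10, that the (special) grading is \emph{not} defined over $\R$. Regarding (b): $\fn_0=0$ does not in general force a positive grading; the paper isolates non-negative and positive gradings as strictly stronger conditions (Lemmas~\ref{H+} and~\ref{H++}, with the separate criteria ${\cal F}(\fn)\neq\emptyset$, ${\cal F}^+(\fn)\neq\emptyset$), and it is exactly for those, not for merely very special gradings, that descent to $\Q$ is available.

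The paper's proof sidesteps both obstacles. What one actually needs is not an $\R$-grading but an injective derivation $\delta\in\Der\,\fn^\R$. Injectivity is a Zariski-open condition on the $\Q$-variety $\Der\,\fn$, and the very special grading of $\fn^\C$ exhibits an injective element of $\Der\,\fn^\C$; hence a generic, in particular some, $\delta\in\Der\,\fn^\R$ is injective. Setting $\fm^\R=\R\delta\ltimes\fn^\R$, the orbit map $g\mapsto\mathrm{Ad}(g)\delta$ identifies $N$ equivariantly with the affine hyperplane $\delta+\fn^\R\subset\fm^\R$ (its differential at $e$ is $x\mapsto-\delta(x)$, a bijection, and unipotence of $\mathrm{Ad}$ does the rest). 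This produces the complete left-invariant affine structure on $N$, and hence on $M$, directly---without constructing a pre-Lie product, descending a grading, or replacing a two-sided grading by a positive one. Your cohomological lifting, carried out over $\overline\Q$ with degrees of both signs, would be far more laborious and would still leave (a) and (b) unresolved; the genericity argument for the derivation is the short-cut you are missing.
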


Among FGTF nilpotent groups, some
of them   are self-similar but not densely 
self-similar.  Some of them  are  not even  self-similar, since  Theorem 2 implies the next  

\begin{corNonaffine} Let $M$ be one of the  non-affine  nilmanifolds appearing in \cite{Ben}. Then $\pi_1(M)$ is not self-similar.
\end{corNonaffine}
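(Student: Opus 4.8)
The plan is to combine the algebraic translation of self-similarity for FGTF nilpotent groups (virtual endomorphisms plus Malcev theory) with the rigidity of Benoist's examples. The first thing I would do is isolate the feature of \cite{Ben} that is actually used. There $\Gamma=\pi_1(M)$ is a cocompact lattice in a CSC nilpotent Lie group $N$, and the point of \cite{Ben} is that $\fn^\C$ carries no left-invariant affine structure; what matters here is the stronger structural source of this, namely that $\fn^\C$ is \emph{characteristically nilpotent}, i.e. every derivation of $\fn^\C$ is a nilpotent operator. (One must record that this holds for each manifold in the family of \cite{Ben}, not just a representative one.) This has two consequences I will use. First, $\fn^\C$ admits \emph{no nontrivial $\Z$-grading}: a grading $\fn^\C=\bigoplus_n\fn_n$ not concentrated in degree $0$ would yield the nonzero semisimple derivation acting as $n\cdot\id$ on $\fn_n$. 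Second, \emph{every automorphism of $\fn^\C$ is unimodular}, i.e. has $|\det|=1$: indeed $\Lie(\Aut(\fn^\C))=\Der(\fn^\C)$ consists of nilpotent operators, so $\Aut(\fn^\C)^\circ$ is a unipotent algebraic group, while $\Aut(\fn^\C)/\Aut(\fn^\C)^\circ$ is finite, so any automorphism has some power of determinant $1$.

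The easy half is then immediate. By the first consequence $\fn^\C$ has no special grading and no very special grading — the only grading is $\fn^\C=\fn^\C_0$, for which $\fn_0\cap\fz=\fz\neq 0$ and $\fn_0\neq 0$ — so Theorem~1 and Theorem~2 already show that $\Gamma$ is neither densely self-similar nor freely self-similar. The real content is that $\Gamma$ is not self-similar at all.

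For that, suppose $\Gamma$ acts faithfully and self-similarly on $A^\omega$. By the standard dictionary for self-similar groups \cite{N}, choosing a letter $a$ whose $\Gamma$-orbit is nontrivial (one exists, else $\Gamma$ acts trivially), the first-letter stabiliser $\Gamma_a$ has index $d\ge 2$ in $\Gamma$ and the section $\gamma\mapsto\gamma_a$ is a homomorphism $\phi\colon\Gamma_a\to\Gamma$; faithfulness — triviality of the $\phi$-core, after the usual passage to a longer prefix and an iterate of $\phi$ — makes $\phi$ injective. Since $\Gamma_a$ is a finite-index subgroup of the FGTF nilpotent group $\Gamma$, it is again a cocompact lattice in the same $N$ (Malcev), so $\phi$ extends uniquely to $\hat\phi\in\Aut(N)$, with $d\hat\phi\in\Aut(\fn^\R)\subset\Aut(\fn^\C)$. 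Because $\hat\phi$ carries a fundamental domain for $\Gamma_a$ onto one for $\phi(\Gamma_a)$ and scales Haar measure by $|\det d\hat\phi|$, comparing covolumes gives $|\det d\hat\phi|=[\Gamma:\phi(\Gamma_a)]/[\Gamma:\Gamma_a]$; triviality of the $\phi$-core forces this ratio to be a proper fraction (a non-contracting virtual endomorphism of a torsion-free nilpotent group has nontrivial core, one producing a $\hat\phi$-invariant ideal lying in $\Gamma_a$ and normal in $\Gamma$), hence $|\det d\hat\phi|<1$. This contradicts the second consequence above, so no such action exists and $\Gamma$ is not self-similar.

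The genuinely delicate step is the reduction, inside Step~3, from ``faithful self-similar action'' to ``injective virtual endomorphism with $|\det d\hat\phi|<1$'': the naive sections need not be injective, and one has to use triviality of the $\phi$-core — equivalently, that no nontrivial normal subgroup of $\Gamma$ acts trivially on a cone $vA^\omega$ — both to replace $\phi$ by an injective one and to force its determinant below $1$. This is exactly the criterion for plain self-similarity of FGTF nilpotent groups established in the body of the paper, so in practice one just quotes that criterion and observes that characteristic nilpotency makes it fail; the rest (unimodularity of $\Aut$ of a characteristically nilpotent Lie algebra, the covolume computation) is routine. The other, more mundane obstacle is the verification, perhaps case by case, that all the non-affine nilmanifolds of \cite{Ben} do have characteristically nilpotent Lie algebras.
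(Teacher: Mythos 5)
Your approach diverges from the paper's in two places, and both create genuine gaps.

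\textbf{The appeal to characteristic nilpotency is unsupported and is not what the paper uses.} You posit that the obstruction in Benoist's examples is that $\fn_B^\C$ is characteristically nilpotent (all derivations nilpotent), and you yourself flag at the end that this would need to be verified. But the paper never makes this claim, and it is a strictly stronger statement than what Benoist actually proves, which is that $\fn_B^\R$ (an $11$-dimensional filiform algebra) admits no faithful representation of dimension $12$. That representation-theoretic statement is exactly what the paper's Lemma~\ref{small} is designed to contradict: it shows that a transitive self-similar $\Gamma$ forces a derivation $\partial$ of $\fn^\R$ that is \emph{injective on the center} (coming from a special grading), and then $\R\partial\ltimes\fn^\R$ is a faithful $(\dim\fn^\R+1)$-dimensional module under the adjoint action. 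Nothing about all derivations being nilpotent is needed, and whether $\fa_{-2,1,t}$ is characteristically nilpotent is a separate, harder question that your proof leaves open. So the foundation of your argument — the two consequences (no grading, unimodular automorphisms) — is not established.

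\textbf{The reduction from ``self-similar'' to an injective contracting virtual endomorphism is where the real difficulty lies, and your argument glosses it.} You write that ``faithfulness — triviality of the $\phi$-core, after the usual passage to a longer prefix and an iterate of $\phi$ — makes $\phi$ injective.'' This is not a standard fact and does not follow: faithfulness of the action on $A^\omega$ gives only that the intersection of the cores of the sections attached to the various orbits is trivial, not that any one of them is trivial, and triviality of the core does not in any case force $\phi$ to be injective. You then assert ``this is exactly the criterion for plain self-similarity of FGTF nilpotent groups established in the body of the paper, so in practice one just quotes that criterion,'' but no such criterion is proved in the paper: Theorems~\ref{Main1} and~\ref{Main2} characterize \emph{transitive} (densely) self-similar and \emph{free} self-similar, not plain self-similar. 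The paper's Corollary~\ref{Nonaffine} works precisely because it first invokes Lemma~\ref{fil} (filiform $\Rightarrow$ one-dimensional center) and then Lemma~\ref{TransCond}, which shows that a self-similar FGTF nilpotent group with $Z(\Gamma)\simeq\Z$ is automatically \emph{transitive} self-similar. That lemma is where the non-transitive case is handled, by examining how the sections $\tilde f_i$ scale the cyclic center: if every $\tilde f_i$ scaled $\Z z$ into itself the action would have a kernel, so some $x_i\notin\Z$, and then \emph{that} $(\Gamma_i,f_i)$ has trivial core. Your proposal does not contain a substitute for this step, and because you never invoke the filiform/rank-one-center property, you cannot run the paper's reduction either.

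In short: the paper's route is filiform $\Rightarrow$ rank-one center $\Rightarrow$ self-similar implies transitive self-similar (Lemma~\ref{TransCond}) $\Rightarrow$ faithful $12$-dimensional module (Lemma~\ref{small}) $\Rightarrow$ contradiction with Benoist's Theorem. Your route substitutes an unverified structural hypothesis for Benoist's actual theorem, and skips the non-transitive-to-transitive reduction that the paper treats carefully.
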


\smallskip
\noindent
{\it 3. A concrete version of Theorems 
\ref{Main1} and \ref{Main2}}

\noindent
Let $N$ be a CSC nilpotent  Lie group, with Lie algebra $\fn^{\R}$. Let's assume  that $N$ contains some  cocompact lattices $\Gamma$. 
By definition, the {\it degree} of a self-similar action of
$\Gamma$ on $A^\omega$ is $\Card\,A$. We ask the following question

\noindent\centerline{\it For a given cocompact lattice $\Gamma\subset N$, 
what is the minimal degree} 

\noindent\centerline {\it degree of a faithfull
(or a free)   self-similar action with dense orbits?}

More notions are now defined. 
Recall that the {\it commensurable class} $\xi$ of a cocompact lattice
$\Gamma_0\subset N$ is the set of all cocompact lattices of $N$ which 
share with $\Gamma_0$ a  subgroup of finite index.  The 
{\it complexity} $\cp\, \xi$  (respectively the free complexity
$\fcp\, \xi$) of the class $\xi$ is the minimal
degree of a   self-similar action of $\Gamma$
with dense orbits (respectively a free self-similar action
of $\Gamma$), for some $\Gamma\in\xi$.

For any algebraic number $\lambda\neq 0$, set
$d(\lambda)=\Card\,{\cal O}(\lambda)/\pi_{\lambda}$,
where ${\cal O}(\lambda)$ is the ring of integers of
$\Q(\lambda)$ and $\pi_{\lambda}=\{x\in {\cal O}(\lambda)\vert x\lambda\in {\cal O}(\lambda)\}$. For any isomorphism $h$ of a finite dimensional vector space over $\Q$, set 

\centerline{$\he\,h=\prod_{\lambda\in\Spec h/\Gal(\Q)}\,d(\lambda)^{m_\lambda}$,}

\noindent where  $\Spec h/\Gal(\Q)$ is the list of 
eigenvalues of $h$ up to conjugacy by
$\Gal(\Q)$ and where $m_\lambda$ is the multiplicity
of the eigenvalue $\lambda$.

By Malcev's Theory, the commensurable class $\xi$ determines a
canonical $\Q$-form $\fn(\xi)$ of the Lie algebra 
$\fn^{\R}$. 
Let  ${\cal S}(\fn(\xi))$ (respectively ${\cal V}(\fn(\xi))$)
be the set of all  $f\in\Aut\,\fn(\xi)$ such that  
$\Spec\,f\vert\fz^\C$ 
(respectively $\Spec\,f$) contains no algebraic integer.

\begin{Main3} We have

\centerline{$\cp\,\xi=\Min_{h\in{\cal S}(\fn(\xi))}\,\he\, h$, and}

\centerline{$\fcp\,\xi=\Min_{h\in{\cal V}(\fn(\xi))}\,\he\, h$.}
\end{Main3}

If, in the previous statement,  ${\cal S}(\fn(\xi))$ is empty, then
the equality $\cp\,\xi=\infty$  means that no $\Gamma\in \xi$ admits a faithfull self-similar action with dense $\Gamma$-orbits. 

Theorem \ref{Main3} answers the previous question only  for the commensurable classes $\xi$. For an individual $\Gamma\in\xi$,  it
provides  some ugly  estimates for the minimal
degree of $\Gamma$-actions, and nothing
better can be expected.

The framework of nonabelian Galois cohomology shows
the concreteness of Theorem \ref{Main3}. Up to conjugacy,
the commensurable classes in $N$ are classified by the $\Q$-forms of some classical objects with a prescribed
$\R$-form, see Corollary 4 of ch. 9, and their complexity is
an invariant of the arithmetic group $\Aut\,\fn(\xi)$.

As an illustration of the previous vague sentence,
we investigate a class 
${\cal N}$ of  CSC nilpotent Lie groups $N$, with Lie algebra
$\fn^\R$. The commensurable classes 
$\xi(q)$ in $N$  are
classified, up to conjugacy,  by the positive definite quadratic forms $q$ on $\Q^2$.  Then, we have

\centerline{$\cp\,\xi(q)=F(d)^{e(N)}$}

\noindent where $e(N)$ is an  invariant of the special grading of 
$\C\otimes \fn^{\R}$, where  $-d$ is the  discriminant of  $q$, and 
where $F(d)$ is the norm of a specific ideal in
$\Q(\sqrt{-d})$, see Theorem \ref{Main4} and Lemma \ref{F(d)1}. 

In particular, $N$ contains some commensurable classes of arbitrarily high
complexity. In a forthcoming paper \cite{Mat},  more complicated examples are investigated, but the formulas are less explicit.

\bigskip
\noindent {\it 4. About the proofs.}
The proofs of the paper are based on different ideas.  Theorem \ref{Main0}, which  is  a statement about rational points of algebraic tori,  is 
the key step in the proof of Theorems \ref{Main1}, \ref{Main2} and
\ref{Main4}. It is based on standard results of number theory, including the Cebotarev's Theorem. It is connected with the density of rational
points for connected groups proved by A. Borel \cite{Bor}, see also \cite{Ro}.

Also, the proof of  Corollary \ref{Anosov} is
based on a paper of A. Manning \cite{Man}
about Anosov diffeomorphisms. The proof of 
Corollary \ref{Nonaffine}
is based on very difficult computations, which, fortunately, were  entirely  done in \cite{Ben}.

\section{Self-similar actions and self-similar data}

Let $\Gamma$ be a group.
This section explains the correspondence between the faithfull transitive self-similar $\Gamma$-actions  and some virtual endomorphisms
of $\Gamma$, called {\it self-similar data}.
Usually self-similar actions are actions
on a rooted tree $A^*$, see \cite{N}. Here the groups
are acting on the boundary $A^\omega$ of $A^*$. This
equivalent viewpoint is better
adapted to our setting.

\bigskip\noindent
{\it 1.1 Transitive self-similar actions}

\noindent In addition of the 
definitions of the introduction, the
following technical notion of transitivity will be used.

A self-similar action of $\Gamma$ on
$A^\omega$ induces an action of $\Gamma$ on $A$.
Indeed, for $a,\,b\in A$ and $\gamma\in\Gamma$,
we have $\gamma(a)=b$ if

\centerline{$\gamma(aw)=b\gamma_a(w)$,}

\noindent for all $w\in A^\omega$. A self-similar action
is called {\it transitive} if the induced action on
$A$ is transitive. The group $\Gamma$
is called {\it transitive self-similar} if it admits
a faithfull transitive self-similar action.

Similarly the self-similar action of $\Gamma$ on
$A^\omega$ induces an action of $\Gamma$ on
each level set $A^n$. Such an action is often called 
{\it level transitive} if $\Gamma$ acts transitively
on each level $A^n$. Obviously, the level 
transitive actions on $A^*$ of \cite{N}
corresponds with the  actions
on $A^\omega$ with dense orbits.

\bigskip\noindent
{\it 1.2 Core and $f$-core}

\noindent
Let $\Gamma$ be a group and $\Gamma'$ be a subgroup.
The {\it core} of $\Gamma'$ is the biggest normal subgroup
$K\triangleleft G$ with $K\subset \Gamma'$. Equivalently 
the core is the kernel of the left action of $\Gamma$ on
$\Gamma/\Gamma'$. 

Now let $f:\Gamma'\to\Gamma$ be a group morphism. By defintion
the {\it $f$-core} is the biggest normal subgroup 
$K\triangleleft G$ with $K\subset \Gamma'$ and $f(K)\subset K$.

\bigskip\noindent
{\it 1.3 Self-similar data}

\noindent  Let $\Gamma$ be a group. A {\it virtual endomorphism}
of $\Gamma$ is a pair $(\Gamma',f)$, where $\Gamma'$ is a subgroup of finite index and $f:\Gamma'\to\Gamma$ is a group morphism. 
A {\it self-similar datum} 
is a virtual endomorphism  $(\Gamma',f)$  with a
trivial  $f$-core.

Assume given a faithfull transitive self-similar action 
of $\Gamma$  on  $A^\omega$. Let $a\in A$, and let
$\Gamma'$ be the stabilizer of $a$. By definition, for
each $\gamma\in\Gamma'$ there is a unique $\gamma_a\in\Gamma$ such that

\centerline{$\gamma (aw)=a\gamma_a(w)$,}

\noindent for any $w\in A^\omega$. 
Let $f:\Gamma'\rightarrow \Gamma$ be the map 
$\gamma\mapsto\gamma_a$. Since the action is faithfull, 
$\gamma_a$ is uniquely determined and  $f$ is a group morphism. 
Also it follows from  Proposition 2.7.4 and 2.7.5 of
\cite{N} that  the $f$-core of $\Gamma'$ is the kernel of
the action, therefore it is trivial.
Hence $(\Gamma',f)$ is a self-similar datum.

Conversely, a virtual endomorphism 
$(\Gamma',f)$ determines a  transitive self-similar action 
of $\Gamma$  on  $A^\omega$, where 
$A\simeq\Gamma/\Gamma'$. Moreover the $f$-core is
the kernel of the corresponding action, see
ch 2 of \cite{N} for details, especially subsection 2.5.5 of \cite{N}). In conclusion, we have

\begin{lemma}\label{corr1} Let $\Gamma$ be a group. There is a correspondence between the self-similar  data 
$(\Gamma',f)$ and 
the faithfull transitive self-similar actions of $\Gamma$ on 
 $A^\omega$, where $A\simeq \Gamma/\Gamma'$. 

\end{lemma}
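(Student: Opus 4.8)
The plan is to check that the two constructions sketched just above are well defined and mutually inverse, using the wreath-recursion formalism of \cite{N} for the technical inputs.

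In the first direction, from a faithful transitive self-similar action of $\Gamma$ on $A^\omega$ I would fix $a\in A$ and set $\Gamma'=\Stab a$; transitivity of the induced action on $A$ gives $[\Gamma:\Gamma']=\Card A<\infty$. For $\gamma\in\Gamma'$ the identity $\gamma(aw)=a\gamma_a(w)$ determines $\gamma_a\in\Gamma$ uniquely, because if $\gamma_a$ and $\gamma_a'$ both satisfied it they would act identically on all of $A^\omega$, hence coincide by faithfulness. Composing the defining identities for $\gamma,\delta\in\Gamma'$ shows that $f\colon\Gamma'\to\Gamma$, $\gamma\mapsto\gamma_a$, is a group morphism, and Propositions 2.7.4 and 2.7.5 of \cite{N} identify the $f$-core of $\Gamma'$ with the kernel of the action, which is trivial; hence $(\Gamma',f)$ is a self-similar datum.

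In the reverse direction, given a self-similar datum $(\Gamma',f)$ I would set $A=\Gamma/\Gamma'$, choose a transversal $T=\{t_a:a\in A\}$ of $\Gamma'$ in $\Gamma$, and define an action of $\Gamma$ on $A^\omega$ by the usual recursion: for $\gamma\in\Gamma$ and $a\in A$ write $\gamma t_a=t_b\delta$ with $b\in A$ and $\delta\in\Gamma'$, and set $\gamma(aw)=b\bigl(f(\delta)(w)\bigr)$. A short induction shows this is a well-defined self-similar $\Gamma$-action whose induced action on $A$ is left translation on $\Gamma/\Gamma'$, hence transitive; by subsection 2.5.5 of \cite{N} its kernel is the $f$-core of $\Gamma'$, so triviality of the $f$-core makes the action faithful. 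Finally I would check that the two assignments are inverse to each other, up to the canonical identification $A\simeq\Gamma/\Gamma'$: extracting a datum from the reconstructed action recovers $(\Gamma',f)$, and reconstructing an action from the datum extracted from a given action returns that action.

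The step that deserves the most care is the equality between the $f$-core of $\Gamma'$ and the kernel of the associated action; everything else is routine bookkeeping. That an element of the kernel generates a normal $f$-invariant subgroup contained in $\Gamma'$ follows because it fixes every word $aw$, so its restriction $f(\delta)$ again lies in the kernel; conversely any normal $f$-invariant subgroup of $\Gamma'$ acts trivially, by descent along the levels $A^n$. Both implications are worked out in ch.\ 2 of \cite{N}, so the lemma follows.
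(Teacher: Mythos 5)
Your proposal is correct and follows essentially the same route as the paper: fix a basepoint, take the stabilizer, read off the virtual endomorphism from the recursion, and invoke the Nekrashevych references (Prop.\ 2.7.4--2.7.5 and \S 2.5.5) for the key identification of the $f$-core with the kernel of the action. You supply slightly more detail on the explicit wreath-recursion reconstruction and on the mutual inverseness of the two assignments, but the underlying argument is the same.
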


\smallskip
\noindent 
This correspondence is indeed a bijection up to conjugacy,
see \cite{N} for a precise statement.

\bigskip\noindent
{\it 1.4 Good self-similar data}

\noindent Let $\Gamma$ be a group, and let
$(\Gamma',f)$ be a virtual endomorphism. Let $\Gamma_n$ be the subgroups of $\Gamma$ inductively defined by
$\Gamma_0=\Gamma$, $\Gamma_1=\Gamma'$ and for $n\geq 2$

\centerline{$\Gamma_{n}=\{\gamma\in\Gamma_{n-1}\vert\,f(\gamma)\in\Gamma_{n-1}\}$}

\begin{lemma}\label{good} The sequence $n\mapsto [\Gamma_n:\Gamma_{n+1}]$ is not increasing.
\end{lemma}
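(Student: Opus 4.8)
The plan is to produce, for every $n\ge 1$, an injective map of left coset spaces $\phi_n:\Gamma_n/\Gamma_{n+1}\to\Gamma_{n-1}/\Gamma_n$ induced by $f$. Since the mere existence of such a map forces $[\Gamma_n:\Gamma_{n+1}]\le[\Gamma_{n-1}:\Gamma_n]$, this is exactly the assertion of the lemma.

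First I would record two elementary observations about the descending chain $\Gamma=\Gamma_0\supseteq\Gamma_1\supseteq\Gamma_2\supseteq\cdots$, valid for every $n\ge 1$. Because $\Gamma_n\subseteq\Gamma_{n-1}\subseteq\cdots\subseteq\Gamma_1$, the morphism $f$ is defined on every $\Gamma_n$. Observation (a): $f(\Gamma_n)\subseteq\Gamma_{n-1}$; for $n\ge 2$ this is immediate from the inclusion $\Gamma_n\subseteq\{\gamma\in\Gamma_{n-1}\mid f(\gamma)\in\Gamma_{n-1}\}$, and for $n=1$ it is trivial since $f$ takes values in $\Gamma=\Gamma_0$. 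Observation (b): $\Gamma_{n+1}=\{\gamma\in\Gamma_n\mid f(\gamma)\in\Gamma_n\}$; this is just the defining formula for $\Gamma_{n+1}$, applicable since $n+1\ge 2$. In other words, $\Gamma_{n+1}$ is precisely the preimage of $\Gamma_n$ under the restriction $f:\Gamma_n\to\Gamma_{n-1}$ furnished by (a).

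Next, using (a) I would define $\phi_n(\gamma\Gamma_{n+1})=f(\gamma)\Gamma_n$ for $\gamma\in\Gamma_n$ and check the two required properties by hand. Well-definedness: if $\gamma\Gamma_{n+1}=\delta\Gamma_{n+1}$ then $\gamma^{-1}\delta\in\Gamma_{n+1}$, so by (b) $f(\gamma)^{-1}f(\delta)=f(\gamma^{-1}\delta)\in\Gamma_n$, hence $f(\gamma)\Gamma_n=f(\delta)\Gamma_n$. Injectivity: if $f(\gamma)\Gamma_n=f(\delta)\Gamma_n$ then $f(\gamma^{-1}\delta)=f(\gamma)^{-1}f(\delta)\in\Gamma_n$, while also $\gamma^{-1}\delta\in\Gamma_n$ because $\gamma,\delta\in\Gamma_n$; by (b) these two facts together give $\gamma^{-1}\delta\in\Gamma_{n+1}$, i.e. $\gamma\Gamma_{n+1}=\delta\Gamma_{n+1}$. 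Thus $\phi_n$ is a well-defined injection, whence $[\Gamma_n:\Gamma_{n+1}]\le[\Gamma_{n-1}:\Gamma_n]$; in particular all these indices are finite, being bounded above by $[\Gamma:\Gamma']=[\Gamma_0:\Gamma_1]$ through this very chain of inequalities.

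There is no serious obstacle in this argument; the single point that demands a little care is that $\Gamma_{n+1}$ need not be normal in $\Gamma_n$, so $\phi_n$ is a map of coset sets rather than of groups and one cannot invoke the homomorphism theorems. The two verifications above must therefore be carried out directly, and it is worth noting that both of them rely on the description in (b) of $\Gamma_{n+1}$ as the subgroup of $\Gamma_n$ cut out by the single condition $f(\gamma)\in\Gamma_n$.
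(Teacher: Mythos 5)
Your proof is correct and takes exactly the same approach as the paper, which simply asserts that $f$ induces an injection $\Gamma_n/\Gamma_{n+1}\hookrightarrow\Gamma_{n-1}/\Gamma_n$. You have merely made explicit the verification of well-definedness and injectivity that the paper leaves to the reader.
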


\begin{proof} For $n>0$, the  map $f$ induces an injection of the set $\Gamma_n/\Gamma_{n+1}$ into $\Gamma_{n-1}/\Gamma_{n}$, thus
we have $[\Gamma_n:\Gamma_{n+1}]\leq [\Gamma_{n-1}:\Gamma_{n}]$.
\end{proof}

The  virtual endomorphism $(\Gamma',f)$ is called {\it good}
if $[\Gamma_n:\Gamma_{n+1}]=[\Gamma/\Gamma']$ for all
$n$. 

Let $(\Gamma',f)$ be a  self-similar datum,
and let $A^*$ be the corresponding tree on which $\Gamma$ acts. If $a$ is the distinguished point in 
$A\simeq \Gamma/\Gamma'$, then $\Gamma_n$ is the stabilizer of $a^n$. If  the  self-similar datum $(\Gamma',f)$ is good, then $[\Gamma:\Gamma_n]=\Card A^n$ and therefore $\Gamma$ acts transitively on $A^n$. Exactly as before, we have

\begin{lemma}\label{corr2} Let $\Gamma$ be a group. There is a correspondence between the good self-similar  data $(\Gamma',f)$ and the
faithfull  self-similar actions of $\Gamma$ on 
$A^\omega$ with dense orbits, where $A\simeq \Gamma/\Gamma'$. 
\end{lemma}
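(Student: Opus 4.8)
The plan is to combine Lemma \ref{corr1} with the discussion preceding the statement, adding only one observation: for a self-similar action, having dense orbits in $A^\omega$ is equivalent to being \emph{level transitive}, i.e. to the induced action on each level set $A^n$ being transitive. I would first record this as an elementary topological fact. Since $A^\omega=\varprojlim A^n$ carries the pro-finite topology, a $\Gamma$-orbit in $A^\omega$ is dense iff its image under the canonical projection $A^\omega\to A^n$ equals $A^n$ for every $n$. For a self-similar action the length-$n$ prefix of $\gamma w$ is the image of the length-$n$ prefix of $w$ under the induced action of $\gamma$ on $A^n$; hence \emph{every} $\Gamma$-orbit in $A^\omega$ is dense precisely when $\Gamma$ acts transitively on each $A^n$, and in that case, in particular, transitively on $A=A^1$.

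For the forward direction, start with a faithfull self-similar action with dense orbits. By the observation above it is transitive, so Lemma \ref{corr1} attaches to it a self-similar datum $(\Gamma',f)$ with $A\simeq\Gamma/\Gamma'$, and by the remark just before the statement $\Gamma_n=\Stab(a^n)$, where $a$ is the distinguished point of $A\simeq\Gamma/\Gamma'$. Transitivity on $A^n$ gives $[\Gamma:\Gamma_n]=\Card A^n=[\Gamma:\Gamma']^n$; since $\Gamma=\Gamma_0\supseteq\Gamma_1\supseteq\cdots$ with all these indices finite (Lemma \ref{good}), forming successive quotients yields $[\Gamma_{n-1}:\Gamma_n]=[\Gamma:\Gamma']$ for all $n$, so the datum is good. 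Conversely, from a good self-similar datum $(\Gamma',f)$ let $\Gamma$ act on the associated $A^\omega$ as in Lemma \ref{corr1}; this action is faithfull (trivial $f$-core) and transitive, $\Gamma_n=\Stab(a^n)$ again, and goodness gives $[\Gamma:\Gamma_n]=\prod_{k=1}^n[\Gamma_{k-1}:\Gamma_k]=[\Gamma:\Gamma']^n=\Card A^n$, so $\Gamma$ acts transitively on each $A^n$; by the first paragraph all its orbits in $A^\omega$ are dense.

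Finally I would note that the two constructions are mutually inverse because they are the restrictions, to the relevant sub-collections, of the mutually inverse constructions of Lemma \ref{corr1}. There is no genuine obstacle in this argument; the only point requiring a little care is the equivalence ``dense orbits $\Leftrightarrow$ level transitive,'' which is exactly where the pro-finite topology on $A^\omega=\varprojlim A^n$ is used, together with the index bookkeeping supplied by Lemma \ref{good}.
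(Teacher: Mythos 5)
Your proof is correct and follows essentially the same route as the paper: the paper's (very terse) proof in subsection 1.4 rests on exactly your two observations — that for the profinite topology on $A^\omega$ dense orbits is equivalent to level transitivity, and that $\Gamma_n=\Stab(a^n)$ so goodness of $(\Gamma',f)$ is equivalent to $[\Gamma:\Gamma_n]=\Card A^n$ for all $n$. You have simply spelled out the index bookkeeping that the paper leaves implicit.
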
 

\bigskip\noindent
{\it 1.5 Fractal self-similar data}

\noindent
Let $\Gamma$ be a group. A {\it self-similar datum} 
$(\Gamma',f)$ is called {\it fractal}
(or recurrent)  if $f(\Gamma')=\Gamma$. A self-similar
action of $\Gamma$ on some
$A^\omega$  is called {\it fractal} if it is transitive and the corresponding self-similar datum is fractal, see \cite{N} section 2.8. Obviously a  fractal action has dense orbits.

The group $\Gamma$ is called {\it fractal} 
(respectively {\it freely fractal})
if $\Gamma$ admits  a faithfull (respectively free) fractal action on some $A^\omega$.

\section{Rational points of a torus}

We are going to prove Theorem 1, about the rational points of
algebraic tori.

For the whole chapter, let $\bf H$ be an algebraic
 torus defined over $\Q$ and let $X({\bf H})$ be the group of characters of $\bf H$.
For a number field $K$, let's denote by
$\Gal(K):=\Gal(\overline\Q/K)$ its
absolute Galois group. The group $X({\bf H})$ is a 
$\Gal(\Q)$-module which is 
isomorphic to $\Z^r$  as an abelian group,
where $r= \dim {\bf H}$.  The {\it splitting field} of $\bf H$ is the smallest Galois extension $L$ of $\Q$ such that $\Gal(L)$ acts trivially on $X({\bf H})$, or, equivalently  such that $\bf H$ is $L$-isomorphic to $\G_m^r$, where $\G_m$ denotes the multiplicative group. Moreover, we have 

\centerline{$\chi(h)\in L^*$}

\noindent for any $\chi\in X({\bf H})$ and any $h\in {\bf H}(\Q)$.

 Let ${\cal O}$ be 
the ring of integers of $L$. Recall that a {\it fractional ideal} is a nonzero finitely generated ${\cal O}$-submodule of $K$. A fractional ideal $I$ is called {\it integral} if $I\subset {\cal O}$. If the fractional ideal $I$ is integral and 
$I\neq{\cal O}$, then $I$  is merely an  ideal of ${\cal O}$.

Let  ${\cal I}$ be the set of all
fractional ideals and ${\cal I}^+$ be the subset of
all integral ideals. Given  $I$ and $J$ in ${\cal I}$, 
their {\it product} is the ${\cal O}$-module generated by all products $ab$ where $a\in I$ and $b\in J$. Since 
${\cal O}$ is a Dedekind ring,
we have

\centerline{${\cal I}\simeq\oplus_{\pi\in{\cal P}}\, \Z \,[\pi]$} 

\centerline{${\cal I}^+\simeq\oplus_{\pi\in{\cal P}} \,\Z_{\geq 0}\, [\pi]$,}

\noindent where ${\cal P}$ is the set of prime ideals of 
${\cal O}$.
Indeed the additive notation is used for
for the group ${\cal I}$ and the monoid ${\cal I}^+$: 
view as an element of
${\cal I}$ the fractional ideal
$\pi_1^{m_1}\dots \pi_n^{m_n}$ will be denoted 
as $m_1 [\pi_1]+\dots +m_n [\pi_n]$. 

Since $\Gal(L/\Q)$ acts by permutation on ${\cal P}$, 
${\cal I}$ is a   $\Z \Gal(L/\Q)$-module. For 
$S\subset {\cal P}$, set 

\centerline{
${\cal I}_S=\oplus_{\pi\in{\cal P}\setminus S}\, \Z \,[\pi]$.}

\begin{lemma}\label{Cebotarev} Let $S\subset {\cal P}$ be a finite 
 subset and let $r>0$ be an integer.

The $\Gal(L/\Q)$-module ${\cal I}$ contains a free
$\Z \Gal(L/\Q)$-module $M(r)$ of rank $r$ such that 

(i) $M(r)\cap {\cal I}^+=\{0\}$, and

(ii) $M(r)\subset {\cal I}_S$.
\end{lemma}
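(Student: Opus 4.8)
The plan is to build $M(r)$ out of differences of prime ideals lying above carefully chosen rational primes, Chebotarev's theorem supplying the primes. The relevant elementary fact is that for a rational prime $p$ the primes of ${\cal O}$ above $p$ form a single $\Gal(L/\Q)$-orbit, whose stabiliser (when $p$ is unramified) is the decomposition group; hence that orbit is a \emph{free} $\Gal(L/\Q)$-set precisely when $p$ splits completely in $L$, in which case $\bigoplus_{\pi\mid p}\Z\,[\pi]$ is a free $\Z\Gal(L/\Q)$-module of rank $1$. So first I would invoke the Chebotarev density theorem: the rational primes with trivial Frobenius in $\Gal(L/\Q)$ — equivalently, the primes splitting completely in $L$ — have positive density, hence are infinite in number. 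Letting $S_0$ be the finite set of rational primes lying below a prime of $S$, I would then pick distinct primes $p_0,p_1,\dots,p_r$, none in $S_0$, each splitting completely in $L$, and for each $j$ choose a prime $\pi_j\in{\cal P}$ above $p_j$.

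Next I would set $v_i:=[\pi_i]-[\pi_0]$ for $1\le i\le r$ and define $M(r):=\sum_{i=1}^r\Z\Gal(L/\Q)\,v_i$, i.e.\ the image of the $\Z\Gal(L/\Q)$-linear map $\varphi\colon(\Z\Gal(L/\Q))^r\to{\cal I}$ carrying the $i$-th standard generator to $v_i$. All three required properties should fall out of the single observation that the orbits $\Gal(L/\Q)\pi_0,\dots,\Gal(L/\Q)\pi_r$ are pairwise disjoint and free. Writing $a_i=\sum_g a_{i,g}\,g$, the coefficient of $[g\pi_i]$ in $\varphi(a_1,\dots,a_r)$ is exactly $a_{i,g}$ for $1\le i\le r$, which forces $\varphi$ to be injective and $M(r)$ free of rank $r$. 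For (i): if $\varphi(a_1,\dots,a_r)\in{\cal I}^+$ then all its coefficients are nonnegative; the $[g\pi_i]$-coefficient gives $a_{i,g}\ge 0$ and the $[g\pi_0]$-coefficient gives $-\sum_{i=1}^r a_{i,g}\ge 0$, so every $a_{i,g}=0$ and $M(r)\cap{\cal I}^+=\{0\}$. For (ii): $M(r)$ is supported on the primes above $p_0,\dots,p_r$, none of which lies in $S$ because $p_j\notin S_0$, hence $M(r)\subset{\cal I}_S$.

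I expect the only genuine content to be the first step — the existence of infinitely many rational primes splitting completely in $L$ — everything else being bookkeeping with disjoint free $\Gal(L/\Q)$-orbits. The small points I would watch are: that "trivial Frobenius" is truly equivalent to "splits completely" (valid for unramified $p$, and a completely split prime is automatically unramified); that one cannot economise by taking $v_i$ of the form $[\pi_i]-g[\pi_i]$ inside a single orbit, since $1-g$ is a zero divisor in $\Z\Gal(L/\Q)$ and the resulting module would fail to be free, which is exactly why two distinct orbits per generator (here sharing the base $\pi_0$) are needed; and that removing the finitely many primes of $S_0$ discards the \emph{entire} $\Gal(L/\Q)$-orbit above each $p_j$ from $S$, which is what property (ii) requires.
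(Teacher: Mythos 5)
Your proposal is correct and follows essentially the same route as the paper's proof: choose $r+1$ completely split rational primes avoiding $S$ (via Chebotarev), form the differences $[\pi_i]-[\pi_0]$ over a fixed base ideal, and read off freeness and the disjointness from ${\cal I}^+$ by examining coefficients on the pairwise-disjoint free $\Gal(L/\Q)$-orbits. Your remarks on why one cannot work within a single orbit and on clearing the whole orbit above each discarded $p_j$ are accurate clarifications of points the paper leaves implicit.
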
 

\begin{proof} Let $S'$ be the set of all prime numbers which are divisible by some $\pi\in S$.
Let $\Sigma$ be the set of prime numbers
$p$ that are completely split in $K$, i.e. such that
${\cal O}/p{\cal O}\simeq \F_p^{[L:\Q]}$. 
For $p\in\Sigma$,  let $\pi\in {\cal P}$ be a prime
ideal over $p$.  When $\sigma$ runs over $\Gal(L/\Q)$ 
the ideals $\pi^\sigma$ are all distinct, and therefore $[\pi]$ generates a free $\Z\Gal(L/\Q)$-submodule of rank one
in ${\cal I}$.

By Cebotarev theorem, the set $\Sigma$ is infinite. Choose $r+1$ distinct prime numbers
$p_0,\dots p_{r}$ in $\Sigma\setminus S'$, and let $\pi_0,\dots,\pi_{r}\in{\cal P}$ such that
${\cal O}/\pi_i=\F_{p_i}$. For $1\leq i\leq r$, set 
$\tau_i=[\pi_i]-[\pi_0]$ and let
$M(r)$ be the $\Z \Gal(L/\Q)$-module generated by 
$\tau_1,\dots,\tau_r$. 

Obviously, the $\Gal(L/\Q)$-module $M(r)$ 
is free of rank $r$ and $M(r)\subset {\cal I}_S$. 
It remains to prove that $M(r)\cap {\cal I}^+=\{0\}$.
Let 

\centerline{$A=\sum\limits_{1\leq i\leq r, \sigma\in \Gal(L/\Q)}\, m_{i,\sigma}\,
\tau_i^\sigma$}

\noindent be an element of $M(r)\cap {\cal I}^+$. 
We have $A=B-C$, where

\centerline{$B=\sum\limits_{1\leq i\leq r, \sigma\in \Gal(L/\Q)}\, m_{i,\sigma}\,
[\pi_i^\sigma]$, and}

\centerline{$C=\sum\limits_{\sigma\in \Gal(L/\Q)}\,
(\sum\limits_{1\leq i\leq r}\, m_i^\sigma)
[\pi_0^\sigma]$.}

\noindent Thus the condition $A\in {\cal I}^+$ implies
that 

\centerline{$m_i^\sigma\geq 0$, for any $1\leq i\leq k$ and 
$\sigma\in \Gal(L/\Q)$,  and}

 \centerline{$\sum\limits_{1\leq i\leq r}\, m_i^\sigma\leq 0$,
 for any $\sigma\in \Gal(L/\Q)$.}

\noindent Thus  all the integers $m_i^\sigma$ vanish.
Therefore $M(r)$  intersects ${\cal I}^+$ trivially.

\end{proof}

For $\pi\in{\cal P}$, let $v_\pi:L^*\to\Z$ be  the corresponding valuation. 

\begin{lemma}\label{free}  Let $S\subset {\cal P}$ be a finite 
$\Gal(L/\Q)$-invariant subset
and let $r>0$ be an integer.

The $\Gal(L/\Q)$-module $L^*$ contains a free
$\Z \Gal(L/\Q)$-module $N(r)$ of rank $r$ such that 

(i) $N(r)\cap {\cal O}=\{1\}$, and

(ii) $v_\pi(x)=0$ for any $x\in N(r)$ and any $\pi\in S$.
\end{lemma}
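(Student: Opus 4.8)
The plan is to transport the module furnished by Lemma~\ref{Cebotarev} from the group of fractional ideals back to $L^*$ along the principal-ideal map, controlling the unavoidable kernel (the units) and cokernel (the class group). Write $\phi\colon L^*\to{\cal I}$ for the group homomorphism $x\mapsto (x)=\sum_{\pi\in{\cal P}}v_\pi(x)\,[\pi]$. It is $\Gal(L/\Q)$-equivariant, its kernel is the unit group ${\cal O}^*$, and its cokernel is the ideal class group of $L$, which is finite; call its order $h$. In particular $h\,{\cal I}\subset\phi(L^*)$.

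First I would apply Lemma~\ref{Cebotarev} to the finite set $S$ and the integer $r$, obtaining a free $\Z\Gal(L/\Q)$-submodule $M(r)\subset{\cal I}$ of rank $r$, say with free generators $\tau_1,\dots,\tau_r$, satisfying $M(r)\cap{\cal I}^+=\{0\}$ and $M(r)\subset{\cal I}_S$. Since each ideal class $[\tau_i]$ is killed by $h$, the fractional ideal $h\tau_i$ is principal, so I may choose $x_i\in L^*$ with $\phi(x_i)=h\tau_i$. Define $N(r)$ to be the $\Z\Gal(L/\Q)$-submodule of $L^*$ generated by $x_1,\dots,x_r$.

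Next I would check that $N(r)$ is free of rank $r$. Let $F=(\Z\Gal(L/\Q))^{r}$ with basis $e_1,\dots,e_r$, and let $u\colon F\to L^*$ be the $\Z\Gal(L/\Q)$-linear map with $u(e_i)=x_i$, so that $N(r)=u(F)$. The composite $\phi\circ u\colon F\to{\cal I}$ sends $e_i$ to $h\tau_i$, hence factors as the isomorphism $F\to M(r)$ defined by $e_i\mapsto\tau_i$ (since $\tau_1,\dots,\tau_r$ freely generate $M(r)$) followed by multiplication by $h$ on ${\cal I}$, which is injective because ${\cal I}\cong\bigoplus_{\pi\in{\cal P}}\Z[\pi]$ is torsion free. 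Thus $\phi\circ u$ is injective, so $u$ is injective; therefore $N(r)\cong F$ is free of rank $r$ over $\Z\Gal(L/\Q)$, and $\phi$ restricts to an injection $N(r)\hookrightarrow{\cal I}$ with image $h\,M(r)$.

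It remains to verify (i) and (ii). For (ii): if $x\in N(r)$ and $\pi\in S$, then $\phi(x)\in h\,M(r)\subset M(r)\subset{\cal I}_S=\bigoplus_{\pi'\notin S}\Z[\pi']$, so its $[\pi]$-component, which is $v_\pi(x)$, vanishes. For (i): if $x\in N(r)\cap{\cal O}$, then all $v_\pi(x)$ are $\geq 0$, hence $\phi(x)\in{\cal I}^+$; combined with $\phi(x)\in h\,M(r)\subset M(r)$ and $M(r)\cap{\cal I}^+=\{0\}$ this forces $\phi(x)=0$, and then $x=1$ by injectivity of $\phi$ on $N(r)$. The one delicate point is the freeness of $N(r)$ in the third paragraph: lifting $M(r)$ through $\phi$ could a priori create relations coming from the units ${\cal O}^*=\ker\phi$, or decrease the rank, and the cure is precisely the observation that $\phi\circ u$ is already injective --- which uses only that ${\cal I}$ is torsion free and that $M(r)$ is free.
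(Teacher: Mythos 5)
Your proof is correct and follows essentially the same strategy as the paper: apply Lemma~\ref{Cebotarev} to get a free module $M(r)$ of fractional ideals, use finiteness of the class group to multiply by $h$ so that the generators become principal, and lift to $L^*$, observing that injectivity of the lifted map (hence freeness of $N(r)$) is automatic because the valuation map composed with the lift is already injective. The only cosmetic difference is that the paper restricts the valuation map to $L^*_S \to {\cal I}_S$ from the outset, whereas you work with the full map $L^* \to {\cal I}$ and then verify after the fact that the image lands in ${\cal I}_S$; both lead to the same conclusion.
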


\begin{proof} 
Set
$L^*_S=\{x\in L^*\vert v_\pi(x)=0,  \,\forall \pi\in S\}$
and let $\theta: L^*_S\rightarrow {\cal I}_S$ be the map 
$x\mapsto\sum_{\pi\in {\cal P}}\,v_\pi(x)\,[\pi]$. 

By Lemma \ref{Cebotarev},  ${\cal I}_S$ contains a free
$\Z \Gal(L/\Q)$-module $M(r)$ of rank $r$ such that 
$M(r)\cap {\cal I}^+=\{0\}$. Let's remark that $\Coker\,\theta$ 
is a subgroup of  the class group
$\Cl(L)$ of $L$. 
Since, by Dirichelet Theorem, $\Cl(L)$ is finite,
there is a positive integer $d$ such that $d.M(r)$ lies in the image of
$\theta$. Since $M(r)$ is free, there is a free 
$\Z \Gal(K/\Q)$-module $N(r)\subset L^*_S$ 
of rank $r$ which is a lift of $dM(r)$, i.e. such that $\theta$  
induces an isomorphism
$N(r)\simeq d.M(r)$. Since $\theta({\cal O}\setminus 0)$ lies in
${\cal I}^+$, we have $\theta(N(r)\cap {\cal O})=\{0\}$.
 It follows that $N(r)\cap {\cal O}=\{1\}$.

The second assertion follows from the fact that $N(r)$ lies in 
$L^*_S$.
\end{proof}

 For $\pi\in {\cal P}$, let ${\cal O}_\pi$ and $L_\pi$ be the $\pi$-adic completions  of ${\cal O}$ and $L$. 
Let $x,\,y \in L$ and let $n>0$ be an integer. In what follows,
the congruence 

\centerline{$x\equiv y$ modulo $n{\cal O}_\pi$}

\noindent  means $x_\pi\equiv y_\pi\mod n {\cal O}_\pi$, where 
$x_\pi$ and $y_\pi$ are the images of $x$ and $y$ in $L_\pi$. 

The case  $n=1$ of the next statement will be used in
further sections. In such a case,
Assertion (ii) is tautological.

\begin{thm} \label{Main0} Let ${\bf H}$ be an algebraic torus defined over $\Q$,
and let $L$ be its splitting field.
Let $n>0$ be an integer and let $S\subset {\cal P}$ be 
the set of prime divisors of $n$.

There exists $h\in {\bf H}(\Q)$ such that

(i) $\chi(h)$ is not an algebraic integer, 
for any non-trivial  $\chi\in X({\bf H})$, and

(ii) $\chi(h) \equiv 1 \mod n{\cal O}_\pi$
for any $\chi\in X({\bf H})$ and any $\pi\in S$.
\end{thm}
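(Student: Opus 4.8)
The plan is to reduce the statement to a pair of purely multiplicative assertions inside $L^*$ that have already been arranged by Lemmas~\ref{Cebotarev} and~\ref{free}, and then transport them back to ${\bf H}(\Q)$ via a surjectivity statement for rational points of tori. First I would fix an isogeny (or better, an embedding of character groups) realizing ${\bf H}$ as a subtorus of a quasi-split torus, i.e. choose a $\Gal(L/\Q)$-equivariant presentation of $X({\bf H})$ as a quotient of a free $\Z\Gal(L/\Q)$-module $\bigoplus_{i}\Z\Gal(L/\Q)\,e_i$; dually this exhibits ${\bf H}$ as a subtorus of a product of Weil restrictions $R_{L/\Q}\G_m$. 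A rational point $h\in{\bf H}(\Q)$ then amounts to a tuple of elements $x_i\in L^*$ satisfying the finitely many multiplicative relations cut out by the kernel of $X(\text{free})\to X({\bf H})$, together with the Galois-descent conditions; and for each character $\chi\in X({\bf H})$ the value $\chi(h)$ is a monomial $\prod_i x_i^{a_i}$ in the $x_i$.

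The heart of the argument is to produce the $x_i$. Here I would invoke Lemma~\ref{free} with $S$ equal to the set of primes of ${\cal O}$ dividing $n$ (which is finite and $\Gal(L/\Q)$-invariant) and with $r$ equal to the rank of the free module, obtaining a free $\Z\Gal(L/\Q)$-submodule $N(r)\subset L^*$ whose nonzero elements have valuation zero at every $\pi\in S$ and which meets ${\cal O}$ trivially. Choosing the $x_i$ to be the free generators of $N(r)$ guarantees: (a) any nontrivial monomial $\prod x_i^{a_i}$ lies in $N(r)\setminus\{1\}$, hence is \emph{not} in ${\cal O}$, so a fortiori not an algebraic integer — this is Assertion (i), noting that a nontrivial character of ${\bf H}$ pulls back to a nonzero, hence nontrivial, element of the free module; and (b) every such monomial has valuation zero at each $\pi\in S$. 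For Assertion (ii) I still need the congruence $\chi(h)\equiv 1\bmod n{\cal O}_\pi$, not merely a unit at $\pi$; to get this I would, \emph{before} applying Lemma~\ref{free}, raise the $x_i$ to a suitable power — more precisely, replace $N(r)$ by $N(r)^{[L_\pi^{\times}:1+n{\cal O}_\pi]}$ or argue that since $x_i\in 1+n{\cal O}_\pi$ can be forced by a finite-index condition, one may compose the lift in Lemma~\ref{free} with a further congruence constraint. Cleanest is: the subgroup $U=\{x\in L^*_S: x\equiv 1 \bmod n{\cal O}_\pi\ \forall\pi\in S\}$ has finite index in $L^*_S$, so $N(r)\cap U$ still has finite index in $N(r)$, hence contains a free $\Z\Gal(L/\Q)$-submodule of full rank $r$; take the $x_i$ from that submodule.

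Finally I would descend from the tuple $(x_i)$ to an honest point $h\in{\bf H}(\Q)$. Two issues arise. The first is the multiplicative relations: the character lattice $X(\text{free})\twoheadrightarrow X({\bf H})$ need not split, so an arbitrary tuple in $N(r)$ need not satisfy them. The standard fix is to choose the free presentation so that $X({\bf H})$ is a \emph{direct summand} as an abelian group (always possible since $X({\bf H})$ is free over $\Z$) and to work instead with the dual: ${\bf H}$ is a quotient of a quasi-split torus ${\bf Q}$ by an isogeny, and one produces the point in ${\bf Q}(\Q)$ — where rational points are literally tuples in $L^*$ with Galois-twisting — and pushes it down to ${\bf H}(\Q)$; one only loses a bounded power under the isogeny, which is absorbed exactly as in the previous paragraph. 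The second issue is the Galois-equivariance: a point of $R_{L/\Q}\G_m$ is a single element of $L^*$, and its images under $\chi\in X$ are its Galois conjugates; because $N(r)$ was built as a $\Z\Gal(L/\Q)$-module one may pick a single generator $x$ of a rank-one free $\Z\Gal(L/\Q)$-summand and let its conjugates play the role of the coordinates, which is precisely what makes the descent work. I expect the main obstacle to be bookkeeping the passage through the isogeny/presentation while keeping both conditions (i) and (ii) intact simultaneously — in particular checking that the power needed to kill the congruence at $S$ does not reintroduce algebraic integrality at some other prime; this is handled by the fact that raising a non-integer in $L^*_S$ to a positive power keeps it a non-integer (valuations scale, and a negative valuation stays negative), so (i) is stable under the operations forced by (ii).
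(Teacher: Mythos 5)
Your argument is essentially the paper's proof, just told through the ``quasi-split cover'' optic. Both apply Lemma~\ref{free} with $S$ the set of primes dividing $n$ to obtain a free $\Z\Gal(L/\Q)$-submodule $N(r)\subset L^*_S$ of rank $r=\dim{\bf H}$ meeting $\cal O$ trivially, and both produce the point of ${\bf H}(\Q)=\Hom_{\Gal(L/\Q)}(X({\bf H}),L^*)$ as the composite of the natural embedding $\Delta:X({\bf H})\hookrightarrow X({\bf H})\otimes\Z\Gal(L/\Q)$ (which is your ``${\bf H}$ is a quotient of a quasi-split torus ${\bf Q}$'' in concrete form) with a $\Z\Gal(L/\Q)$-module isomorphism onto $N(r)$. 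Two small corrections to your bookkeeping: the surjection ${\bf Q}\twoheadrightarrow{\bf H}$ is typically \emph{not} an isogeny (its kernel has positive dimension whenever $\Gal(L/\Q)$ is nontrivial), and no power is lost when passing to ${\bf H}(\Q)$ --- the induced map on $\Q$-points is simply restriction of $\Gal(L/\Q)$-equivariant homomorphisms along $X({\bf H})\hookrightarrow X({\bf Q})$, which is exact on the image, so the worry you flag there is moot. For condition~(ii), the paper raises $h'$ to a single power $m=\lcm(m_{i,\pi})$ at the very end, using that each $\chi_i(h')$ is a $\pi$-adic unit for $\pi\in S$; you instead shrink $N(r)$ to a finite-index free $\Z\Gal(L/\Q)$-submodule contained in $1+n{\cal O}_\pi$ for all $\pi\in S$. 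These are the same trick, and your closing observation --- that powering a non-integer in $L^*_S$ keeps it a non-integer, so (i) survives the adjustment --- is exactly the right sanity check.
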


\begin{proof} {\it Step 1.} First an element $h'\in {\bf H}(\Q)$
satisfying  Assertion (i) and

\centerline {(iii) $v_\pi(\chi(h'))=0$, for any $\pi\in S$ and any 
$\chi\in X({\bf H})$}

\noindent is found.

The abelian group $X({\bf H})$ is free of rank $r$
where $r=\dim\,{\bf H}$. Therefore, the comultiplication
$\Delta:X({\bf H})\rightarrow X({\bf H})\otimes \Z\Gal(L/\Q)$ provides an embedding of $X({\bf H})$ into
a free $\Z\Gal(L/\Q)$-module of rank $r$. By lemma \ref{free}, there
a free $\Z \Gal(L/\Q)$-module $N(r)\subset L^*_S$ of rank $r$
with $N(r)\cap {\cal O}=\{1\}$. 
Let 

\centerline{$\mu:X({\bf H})\otimes \Z\Gal(L/\Q)\to N(r)$,} 

\noindent be an isomorphism of $\Z \Gal(L/\Q)$-modules,
and set $h'=\mu\circ\Delta$.

 Since
${\bf H}(\Q)=\Hom_{\Gal(L/\Q)}(X({\bf H}),L^*)$, the embedding $h'$ is indeed an element of ${\bf H}(\Q)$. Viewed as a map
from $X({\bf H})$ to $L^*$, $h'$ is the morphism
$\chi\in X({\bf H})\mapsto \chi(h')$.

Since $\Image\,h'\cap{\cal O}=1$ and $h'$ is injective,
$\chi(h')$ is not an algebraic integer if $\chi$ is a non-trivial character. Since $\Image\,h'\subset L^*_S$, we have
 $v_\pi(\chi(h'))=0$ for any  $\chi\in X({\bf H})$. Therefore
 $h'$ satisfies Assertions (i) and (iii).
 
 \noindent{\it Step 2.} Let $\chi_1,\dots,\chi_r$ be a basis of
 $X({\bf H})$. Since $v_\pi(\chi_i(h'))=0$ for any $\pi\in S$,
 the element $\chi_i(h') \mod n{\cal O}_\pi$ is an inversible element in the finite ring ${\cal O}_\pi/n{\cal O}_\pi$. Therefore
 there are positive  integers $m_{i,\pi}$ such that
 
 \centerline
 {$\chi_i(h')^{m_{i,\pi}}\equiv 1 \mod n{\cal O}_\pi$,}
 
\noindent for all $1\leq i\leq r$ and all $\pi\in S$.
Set  $m=\lcm (m_{i,\pi})$
and set $h=h'^m$. Obviously $h$ satisfies Assertion (i).
 Moreover we have
$\chi_i(h)\equiv 1 \mod n{\cal O}_\pi$, for all $\pi\in S$ and all
$1\leq i\leq r$, and therefore $h$ satisfies Assertion (ii) as well.
 
\end{proof}

\section{Special Gradings}

Let $\fn$ be a finite dimensional 
Lie algebra defined over $\Q$ and let $\fz$ be its center. 
The relations between the gradings of $\C\otimes \fn$ and
the  automorphisms of $\fn$  are investigated now.

{\it The following important definitions
 will be used  in the whole paper}.
Let ${\cal S}(\fn)$ 
(respectively ${\cal V}(\fn)$) be the set
of all $f\in\Aut\,\fn$ such that
$\Spec\,f\vert_{\fz}$  (respectively $\Spec\,f$)
contains no algebraic integers. Moreover
let ${\cal F}(\fn)$ be the set of all
$f\in {\cal S}(\fn)$ such that all eigenvalues
of $f^{-1}$ are algebraic integers. Also set
${\cal F}^+(\fn)={\cal F}(\fn)\cap {\cal V}(n)$.
Here, by eigenvalues
of a $\Q$-linear endomorphism $F$, we always mean the
eigenvalues of $F$ in $\overline\Q$.

For any field $K$ of characteristic zero, set 
$\fn^K=K\otimes \fn$ and $\fz^K=K\otimes \fz$.
Let ${\bf G}={\bf Aut}\,\fn$ be the algebraic group  of  automorphisms of $\fn$. By definition, ${\bf G}$ is  
defined over $\Q$, and we
have ${\bf G}(K)=\Aut\, \fn^K$ for any field $K$ of 
characteristic zero. The notation $\fn$ underlines that $\fn$  can be viewed as  the functor in Lie algebras $K\mapsto \fn^K$. 
Let ${\bf H}\subset{\bf G}$ be a 
maximal torus  defined over 
$\Q$, whose existence is proved in
\cite{Che}, see also  \cite{Bor},  Theorem 18.2.

By definition, a {\it $K$-grading} of $\fn$ is 
is a decomposition of $\fn^K$

\centerline{$\fn^K=\oplus_{n\in\Z}\,\fn^K_n$}

\noindent such that $[\fn^K_n, \fn^K_m]\subset \fn^K_{n+m}$ for all $n,\,m\in\Z$. 
A grading is called {\it special} 
(respectively {\it very special}) if
$\fz^K\cap \fn^K_0=0$
(respectively if $\fn^K_0=0$).
A grading is called {\it non-negative} 
(respectively {\it positive}) if
$\fn^K_n=0$ for $n<0$
(respectively $\fn^K_n=0$ for $n\leq 0$).

For any field 
$K$ of characteristic zero, a  $K$-grading of $\fn$ can be identified 
with  an algebraic group morphism 

\centerline{$\rho:\G_m\rightarrow {\bf G}$}

\noindent defined over $K$, where $\G_m$ denotes
the multiplicative group.

 Consider  the following two hypotheses

(${\cal H}_K$) 
The Lie algebra $\fn$ admits a special $K$-grading, 

(${\cal H}^0_K$) 
The Lie algebra $\fn$ admits a very special $K$-grading.

\begin{lemma}\label{hypo} Let $K$ be the splitting field of 
${\bf H}$. Up to conjugacy, any grading of
$\fn^\C$ is defined over $K$. In particular

(i) The hypotheses  ${\cal H}_{\C}$ and
${\cal H}_{\overline\Q}$ are equivalent.

(ii) The hypotheses  ${\cal H}^0_{\C}$ and
${\cal H}^0_{\overline\Q}$ are equivalent.
\end{lemma}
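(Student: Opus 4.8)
The plan is to rephrase gradings of $\fn^\C$ as one-parameter subgroups of ${\bf G}={\bf Aut}\,\fn$ and to combine two standard facts: all maximal tori of ${\bf G}$ are conjugate over $\C$, and the cocharacter lattice of a torus is already realised over its splitting field. First I would record a conjugation-invariance remark. If $g\in{\bf G}(\C)=\Aut\,\fn^\C$ and $\fn^\C=\oplus_n\fn^\C_n$ is a $\C$-grading, then $\fn^\C=\oplus_n g(\fn^\C_n)$ is again a $\C$-grading with the same homogeneous dimensions; moreover $g(\fz^\C)=\fz^\C$ since $g$ is a Lie algebra automorphism, so $\fz^\C\cap g(\fn^\C_0)=g(\fz^\C\cap\fn^\C_0)$. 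Hence conjugation by ${\bf G}(\C)$ carries special gradings to special gradings and very special gradings to very special gradings.

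For the core of the argument I would use the identification, recalled in the excerpt, of a grading of $\fn^\C$ with a morphism $\rho\colon\G_m\to{\bf G}_\C$. The image of $\rho$ is connected, hence contained in a maximal torus ${\bf T}$ of ${\bf G}_\C$. All maximal tori of a linear algebraic group over an algebraically closed field are conjugate under the identity component, and the $\Q$-torus ${\bf H}$ supplied by \cite{Che},\cite{Bor} is maximal in ${\bf G}$; so there is $g\in{\bf G}(\C)$ with $g^{-1}{\bf T}g={\bf H}_\C$, and $g^{-1}\rho g$ then factors through ${\bf H}_\C$, defining a grading conjugate to the original one. Since ${\bf H}$ splits over its splitting field $K$, every cocharacter of ${\bf H}_\C$ is defined over $K$ — a cocharacter of a split torus is a tuple of integers, hence defined over the prime field and a fortiori over $K$. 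Therefore $g^{-1}\rho g$, equivalently the decomposition of $\fn^\C$ it induces, is the scalar extension of a decomposition of $\fn^K$; this proves the first assertion.

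The two equivalences then drop out. The implications ${\cal H}_{\overline\Q}\Rightarrow{\cal H}_\C$ and ${\cal H}^0_{\overline\Q}\Rightarrow{\cal H}^0_\C$ hold by extension of scalars from $\overline\Q$ to $\C$. Conversely, a special (resp. very special) grading of $\fn^\C$ is, by the first assertion, ${\bf G}(\C)$-conjugate to one defined over $K$; by the conjugation-invariance remark this conjugate is again special (resp. very special), and since $K\subseteq\overline\Q$ it is in particular a special (resp. very special) grading of $\fn^{\overline\Q}$. This gives ${\cal H}_\C\Rightarrow{\cal H}_{\overline\Q}$ and ${\cal H}^0_\C\Rightarrow{\cal H}^0_{\overline\Q}$, i.e. (i) and (ii).

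The only point requiring genuine care is the bookkeeping around ${\bf G}={\bf Aut}\,\fn$ possibly being non-connected (as it typically is for $\fn$ nilpotent): one must check that $\rho$ automatically lands in ${\bf G}^\circ$ because $\G_m$ is connected, that a maximal torus of ${\bf G}$ is the same as a maximal torus of ${\bf G}^\circ$, and that the cited ${\bf H}$ is one of these, so that the conjugacy theorem for maximal tori applies verbatim. Nothing here is deep, but this is where the verification is needed; the remaining steps are formal.
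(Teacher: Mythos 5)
Your proof is correct and takes essentially the same route as the paper: view a grading as a cocharacter $\rho\colon\G_m\to{\bf G}$, conjugate its image into the chosen maximal $\Q$-torus ${\bf H}$, and use that ${\bf H}$ splits over $K$ (so $\Gal(K)$ acts trivially on $X({\bf H})$) to conclude that $\rho$ is defined over $K$. One small imprecision: the image of $\rho$ lies in a maximal torus because it is itself a torus (a quotient of $\G_m$), not merely because it is connected.
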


\begin{proof}   Let 

\centerline{$\fn^{\C}=\oplus_{n\in\Z}\,\fn_n^{\C}$}

\noindent be a  grading of $\fn^{\C}$ and 
let $\rho:\G_m\rightarrow {\bf G}$ be the corresponding 
algebraic group morphism. Since any maximal torus of 
${\bf G}$ is ${\bf G}(\C)$-conjugate  to $\bf H$,  it  can be assumed that  $\rho(\G_m)\subset {\bf H}$.

Let $X({\bf H})$ be the character group of ${\bf H}$. The group
morphism $\rho$ is determined by the dual morphism
$L: X({\bf H})\rightarrow\Z=X(\G_m)$. However,
$\Gal(K)$ acts trivially on $X({\bf H})$.
Thus $\rho$ is automaticaly defined over
$K$.

\end{proof}

\begin{lemma}\label {technical}
Let $\Lambda$ be a finitely generated abelian group and let
$S\subset\Lambda$ be a finite subset containing no element  of finite order. Then there exists a morphism 
$L:\Lambda\rightarrow\Z$ such that 

\centerline{$L(\lambda)\neq 0$ for any $\lambda\in S$.}
\end{lemma}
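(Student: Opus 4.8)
The statement is a standard fact about finitely generated abelian groups, so I would reduce to the structure theorem. First I would write $\Lambda \cong \Z^r \oplus T$, where $T$ is the torsion subgroup; since any $\lambda \in S$ has infinite order, its image $\bar\lambda$ in $\Lambda/T \cong \Z^r$ is nonzero. A morphism $L:\Lambda \to \Z$ automatically kills $T$ (there are no nonzero homomorphisms from a finite group to $\Z$), so it factors through $\Lambda/T$, and it suffices to find $\bar L : \Z^r \to \Z$ with $\bar L(\bar\lambda) \neq 0$ for each of the finitely many $\bar\lambda$ coming from $S$.

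\emph{Finding the linear functional.} Now the problem is purely about a finite set of nonzero vectors $v_1, \dots, v_k \in \Z^r$: I want an integer linear functional not vanishing on any of them. Each $v_i$ determines a nonzero functional $\langle v_i, - \rangle$, whose kernel $H_i = \{x \in \Z^r : \langle v_i, x\rangle = 0\}$ is a proper subgroup of $\Z^r$ (proper because $v_i \neq 0$). A proper subgroup of $\Z^r$ cannot equal $\Z^r$, and more to the point $\Z^r$ is not a finite union of proper subgroups — this is immediate, for instance, by tensoring with $\Q$: $\Q^r$ is not a finite union of proper subspaces, since over an infinite field a vector space is never a finite union of proper subspaces. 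Hence I can pick $x_0 \in \Z^r \setminus \bigcup_i H_i$, and then $L(\lambda) := \langle \bar\lambda_{\text{lift}}, x_0\rangle$, i.e. the composite $\Lambda \twoheadrightarrow \Lambda/T \cong \Z^r \xrightarrow{\langle -, x_0\rangle} \Z$, does the job: $L(\lambda) = \langle v_i, x_0 \rangle \neq 0$ for the $\lambda \in S$ corresponding to $v_i$.

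\emph{The main obstacle.} There is essentially no obstacle — the argument is elementary. The only point requiring a word of care is the claim that $\Z^r$ is not covered by finitely many proper subgroups; I would justify it by the base change to $\Q$ as above (or, alternatively, by a direct induction on $r$, or by a counting/density argument). Everything else — factoring through the torsion-free quotient, identifying $\Hom(\Lambda,\Z)$ with $\Hom(\Z^r,\Z)$ — is routine. I would keep the write-up to a few lines: reduce to $\Z^r$, observe each $\lambda \in S$ cuts out a proper "coordinate" hyperplane in the dual, avoid their union, done.
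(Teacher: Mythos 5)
Your proof is correct, and it takes a genuinely different route in the final step. Both you and the paper begin by quotienting out the torsion subgroup to reduce to $\Lambda=\Z^d$ with $0\notin S$. But then the paper builds the functional $L$ explicitly: it chooses $N$ large enough that $S\subset\,]-N,N[^d$ and sets $L(a_1,\dots,a_d)=\sum_i a_iN^{i-1}$, so that $L(\lambda)$ is detected by its lowest non-vanishing "base-$N$ digit" (if $i$ is the smallest index with $a_i\neq 0$, then $L(\lambda)\equiv a_iN^{i-1}\not\equiv 0\pmod{N^i}$). You instead dualize: each nonzero $v\in S$ determines a proper subgroup $\ker\langle v,-\rangle$ of $\Z^r$, and since $\Q^r$ is not a finite union of proper subspaces (as $\Q$ is infinite) one can pick $x_0$ outside their union and take $L=\langle -,x_0\rangle$. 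The paper's construction is completely explicit and self-contained; yours is shorter and more conceptual but leans on the standard fact that a vector space over an infinite field is not a finite union of proper subspaces. Both are fine; neither has a gap.
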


\begin{proof}  Let $F$ be the subgroup of finite order elements in
$\Lambda$. Using $\Lambda/F$ instead of $\Lambda$, it can be assumed that 
$\Lambda=\Z^d$ for some $d$ an $0\notin S$.
Let's choose a positive  integer $N$ such that 
$S \subset\, ]-N,N[^d$ and
let 
$L:\Lambda\rightarrow \Z$ be the function defined by

\centerline{$L(a_1,\dots,a_d)=\sum_{1\leq i\leq d} a_i N^{i-1}$.}

\noindent For any 
$\lambda=(a_1,\dots,a_d)\in S$, there is a smallest index $i$ with $a_i\neq 0$. We have $L(\lambda)=a_i N^{i-1}$ modulo $N^i$.
Since 
$\vert a_i \vert<N$,  it follows that $L(\lambda)\neq 0 \mod N^i$ and therefore $L(\lambda)\neq 0$.
\end{proof}

\begin{lemma}\label{root} 
Let $f\in {\bf G}(\Q)$. There is a $f$-invariant $\Z$-grading of 
$\fn^{\overline \Q}$
such that all eigenvalues of $f$ on $\fn_0^{{\overline \Q}}$ 
are roots of unity. 

In particular, if $\Spec\,f$ contains no root of unity, then
$\fn^{\overline \Q}$ admits a very special grading.
\end{lemma}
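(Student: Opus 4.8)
The plan is to combine the generalized eigenspace decomposition of $f$ acting on $\fn^{\overline\Q}$ with Lemma \ref{technical}, the latter carrying all the real content. Since $f\in {\bf G}(\Q)=\Aut\,\fn$ is invertible, its eigenvalues are nonzero algebraic numbers; let $\Lambda\subset\overline\Q^*$ be the multiplicative subgroup they generate, which is finitely generated, and let $S\subset\Lambda$ be the set of those eigenvalues of $f$ that are not roots of unity. An element of $\overline\Q^*$ has finite order if and only if it is a root of unity, so $S$ contains no element of finite order; hence by Lemma \ref{technical} there is a group morphism $L:\Lambda\to\Z$ with $L(\lambda)\neq 0$ for every $\lambda\in S$.

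Next I would write $\fn^{\overline\Q}=\oplus_\lambda\,\fn_{(\lambda)}$ for the decomposition into generalized eigenspaces of $f$, the sum running over $\lambda\in\Spec f$. Because $f$ is a Lie algebra automorphism one has $[\fn_{(\lambda)},\fn_{(\mu)}]\subset\fn_{(\lambda\mu)}$, with the convention $\fn_{(\nu)}=0$ when $\nu\notin\Spec f$; this is the only computational point, and it follows from $f[x,y]=[f(x),f(y)]$ by expanding $(f-\lambda\mu)^N[x,y]$ for $x\in\fn_{(\lambda)}$, $y\in\fn_{(\mu)}$ and $N$ large. Now set $\fn^{\overline\Q}_n=\oplus_{\lambda:\,L(\lambda)=n}\,\fn_{(\lambda)}$. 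Since $L$ is a homomorphism, $[\fn^{\overline\Q}_n,\fn^{\overline\Q}_m]\subset\fn^{\overline\Q}_{n+m}$, so this is a $\Z$-grading of $\fn^{\overline\Q}$; and each $\fn_{(\lambda)}$ is $f$-stable, so the grading is $f$-invariant.

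Finally, the eigenvalues of $f$ on $\fn^{\overline\Q}_0$ are exactly those $\lambda\in\Spec f$ with $L(\lambda)=0$, and by the defining property of $L$ any such $\lambda$ is a root of unity; this proves the first assertion. For the last sentence: if $\Spec f$ contains no root of unity, then no eigenvalue satisfies $L(\lambda)=0$, whence $\fn^{\overline\Q}_0=0$ and the grading is very special. I do not expect any genuine obstacle here beyond verifying the bracket relation $[\fn_{(\lambda)},\fn_{(\mu)}]\subset\fn_{(\lambda\mu)}$ for generalized eigenspaces, which is routine; the arithmetic input is entirely absorbed into Lemma \ref{technical}.
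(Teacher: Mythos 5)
Your proof is correct and follows essentially the same route as the paper: generate $\Lambda$ from $\Spec f$, apply Lemma \ref{technical} to the non-root-of-unity eigenvalues, grade by $L$ on the generalized eigenspace decomposition, and use $[\fn_{(\lambda)},\fn_{(\mu)}]\subset\fn_{(\lambda\mu)}$. The only difference is that you spell out the bracket inclusion for generalized eigenspaces, which the paper uses without comment.
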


\begin{proof}
Let $\Lambda\subset \overline\Q^* $ be the subgroup generated by the 
$\Spec\,f$. For any $\lambda\in\Lambda$ denote
by $E_{(\lambda)}\subset\fn^{\overline\Q}$ the corresponding generalized eigenspace of $f$.
Let  $R$ be the set of all roots of unity in
$\Spec\,f$ and set $S=\Spec\,f\setminus R$.

By  Lemma \ref{technical},  
there is a morphism $L:\Lambda\rightarrow \Z$
such that $L(\lambda)\neq 0$ for any $\lambda\in S$.
Let ${\cal G}$ be the decomposition

\centerline{$\fn^{\overline \Q}=\oplus_{k\in\Z}\,\fn^{\overline \Q}_k$}

\noindent of $\fn^{\overline \Q}$ defined by
$\fn^{\overline \Q}_k=\oplus_{L(\lambda)=k}\, E_{(\lambda)}$.
Since $[E_{(\lambda)},E_{(\mu)}] \subset E_{(\lambda\mu)}$
and $L(\lambda\mu)=L(\lambda)+L(\mu)$ for any 
$\lambda,\,\mu\in\Lambda$, 
it follows that ${\cal G}$ is a grading of the Lie algebra 
$\fn^{\overline \Q}$. Moreover
we have 

\centerline{$\fn^{\overline \Q}_0=\oplus_{\lambda\in R}\, E_{(\lambda)}$,}

\noindent from which the lemma follows.
\end{proof}

\begin{lemma}\label{H} With the previous notations

(i) the Lie algebra $\fn^\C$ admits a special grading iff
  ${\cal S}(\fn)\neq\emptyset$.
  
(ii) the Lie algebra $\fn^\C$ admits a very special grading iff
  ${\cal V}(\fn)\neq\emptyset$.
\end{lemma}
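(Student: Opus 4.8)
The plan is to prove each of the two equivalences by treating the two implications separately and assembling Lemmas \ref{hypo} and \ref{root} together with Theorem \ref{Main0}. For the implication ``$\mathcal S(\fn)\neq\emptyset\Rightarrow\fn^\C$ admits a special grading'' I would start from some $f\in\mathcal S(\fn)$ and apply Lemma \ref{root}: it yields an $f$-invariant $\Z$-grading of $\fn^{\overline\Q}$ whose degree-zero part $\fn_0^{\overline\Q}$ carries only roots of unity as eigenvalues of $f$. Since the center $\fz$ is characteristic, it is $f$-invariant, so $\fz^{\overline\Q}\cap\fn_0^{\overline\Q}$ is an $f$-invariant subspace of both $\fz^{\overline\Q}$ and $\fn_0^{\overline\Q}$; hence the eigenvalues of $f$ there are at once roots of unity and members of $\Spec f|_{\fz}$. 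Because $f\in\mathcal S(\fn)$ forbids algebraic integers (in particular roots of unity) in $\Spec f|_{\fz}$, that intersection has empty spectrum and therefore vanishes, i.e. the grading is special over $\overline\Q$; Lemma \ref{hypo}(i) then promotes it to a special grading of $\fn^\C$. The implication ``$\mathcal V(\fn)\neq\emptyset\Rightarrow\fn^\C$ admits a very special grading'' is shorter still: any $f\in\mathcal V(\fn)$ has no root of unity in $\Spec f$, so the final sentence of Lemma \ref{root} directly produces a very special grading of $\fn^{\overline\Q}$, and Lemma \ref{hypo}(ii) transfers it to $\fn^\C$.

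For the converse implications I would use Theorem \ref{Main0} in its case $n=1$. Given a special grading of $\fn^\C$, first invoke Lemma \ref{hypo} to replace it, up to conjugacy by $\Aut\fn^\C$, with a special grading whose defining cocharacter $\rho\colon\G_m\to{\bf G}$ has image in the chosen maximal torus ${\bf H}$; this conjugation preserves specialness because $\fz^\C$ is a characteristic ideal. The crucial point is that ${\bf H}$ then acts on $\fz$ without the trivial weight: a nonzero ${\bf H}$-fixed vector of $\fz^\C$ would in particular be fixed by $\rho(\G_m)$, hence would sit in $\fn_0^\C\cap\fz^\C$, which is zero by specialness. Applying Theorem \ref{Main0}(i) to the torus ${\bf H}$ (and its splitting field) gives an $h\in{\bf H}(\Q)\subset\Aut\fn$ with $\chi(h)$ not an algebraic integer for every non-trivial $\chi\in X({\bf H})$; since every weight of ${\bf H}$ occurring in $\fz$ is non-trivial, and the eigenvalues of $h$ on $\fz^{\overline\Q}$ are exactly these values $\chi(h)$, we conclude $\Spec h|_{\fz}$ contains no algebraic integer, that is $h\in\mathcal S(\fn)$. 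The very special case runs identically with $\fn$ in place of $\fz$: $\fn_0^\C=0$ forces ${\bf H}$ to act on the whole of $\fn$ without the trivial weight, and the same $h$ then belongs to $\mathcal V(\fn)$.

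Almost all steps here are formal: the center being characteristic, invariance of (very) specialness under $\Aut\fn^\C$, and the identification of $\Spec h$ with the collection of values $\chi(h)$ on the relevant weights. The only substantive input is Theorem \ref{Main0}, which is already proved, so I do not anticipate a real obstacle. The one delicate spot is bookkeeping rather than difficulty: one must make sure the cocharacter defining the given grading can indeed be conjugated into ${\bf H}$ --- so that Theorem \ref{Main0}, which is attached to a specific torus and its splitting field, applies --- and that the ``no trivial weight'' consequence of (very) specialness is extracted correctly for that torus. Everything else is routine verification.
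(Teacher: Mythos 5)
Your proposal is correct and follows essentially the same route as the paper: it uses Lemma \ref{root} to produce a (very) special grading from an element of ${\cal S}(\fn)$ (resp.\ ${\cal V}(\fn)$), and in the converse direction conjugates the cocharacter of a given grading into ${\bf H}$ and then applies Theorem \ref{Main0} with $n=1$ to produce a suitable $h\in {\bf H}(\Q)$. The only cosmetic difference is that the paper isolates the intermediate assertion $H^0({\bf H}(\overline\Q),\fz^{\overline\Q})=0$ explicitly, whereas you phrase it equivalently as ``${\bf H}$ acts on $\fz$ without the trivial weight''; you also spell out slightly more carefully than the paper why $\fz^{\overline\Q}\cap\fn_0^{\overline\Q}$ vanishes.
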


\begin{proof} In order to prove Assertion (i), let's
consider the following assertion

(${\cal A}$) \hskip3cm {$H^0({\bf H}(\overline\Q), \fz^{\overline \Q})=0$.}

\noindent  The proof  is based on the following "cycle" of implications

\noindent\centerline{
$\fn^\C$ has a special grading $\Rightarrow ({\cal A})$
$\Rightarrow {\cal S}(\fn)\neq\emptyset
\Rightarrow \fn^\C$ has a special grading.}

\noindent
{\it Step 1: the existence of a special grading of
$\fn^\C$ implies  (${\cal A}$).}
By hypothesis and Lemma \ref{hypo}, 
$\fn^{\overline\Q}$ admits a special grading.  Let  
$\rho:\G_m \rightarrow {\bf G}$
be the corresponding  group morphism.
Since  all maximal tori of ${\bf G}$ are conjugate to ${\bf H}$, we can assume that $\rho(\G_m)\subset {\bf H}$. Therefore we have 

\centerline{$H^0({\bf H}({\overline \Q}),\fz^{{\overline \Q}})
\subset H^0(\rho({\overline\Q}^*),\fz^{{\overline \Q}})=0$.}

\noindent 
Thus Assertion ${\cal A}$ is proved.
 
\noindent {\it Step 2:  proof that (${\cal A}$) implies  
${\cal S}(\fn)\neq\emptyset$.} 
By Theorem 1, there exists $f\in {\bf H}(\Q)$ such that 
$\chi(f)$ is not an algebraic integer for any non-trivial character 
$\chi\in X({\bf H})$. If we assume (${\cal A}$), then $\Spec\,f\vert_{\fz}$ contains no algebraic integers and therefore ${\cal S}(\fn)\neq\emptyset$.

\noindent {\it Step 3: proof that ${\cal S}(\fn)\neq\emptyset$ implies  
the existence of a special grading.}
For any $f\in {\cal S}(\fn)$,
Since $\Spec\,f\vert_{\fz}$ contains no roots of unity.
It follows from Lemma \ref{root} that the Lie algebra
$\fn^{\overline \Q}$ (and therefore $\fn^\C$)  admits a
special grading.   Therefore ${\cal S}(\fn)\neq\emptyset$ implies  
the existence of a special grading.

The proof of Assertion (ii) is almost identical. Instead of 
$({\cal A})$, the "cycle" of implications uses the following assertion

(${\cal A}^0$) \hskip3cm {$H^0({\bf H}(\overline\Q), \fn^{\overline \Q})=0$.}
\end{proof}

\begin{lemma}\label{H+} 

The  following  are equivalent:

(i) the Lie algebra $\fn^\Q$ admits a non-negative special grading, 

(ii) the Lie algebra $\fn^\C$ admits a non-negative special grading, and

(iii) The set ${\cal F}(\fn)$ is not empty.
\end{lemma}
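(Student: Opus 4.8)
The scheme mirrors the "cycle of implications" used in Lemma \ref{H}, but adapted to the non-negativity constraint, which forces the eigenvalues of the relevant automorphism to be algebraic \emph{integers} (this is the content of the condition ${\cal F}(\fn)\neq\emptyset$: one wants $f^{-1}$ to have algebraic-integer eigenvalues, equivalently $f$ has eigenvalues which are algebraic integers with algebraic-integer inverses among the restriction to $\fz$... actually the definition asks $\Spec f^{-1}\subset\overline\Z$ while $\Spec f|_\fz$ has no algebraic integers). The plan is to prove the cycle
\[
(i)\ \Rightarrow\ (ii)\ \Rightarrow\ (iii)\ \Rightarrow\ (i),
\]
where $(i)\Rightarrow(ii)$ is trivial (a $\Q$-grading is in particular a $\C$-grading), so the real work is $(ii)\Rightarrow(iii)$ and $(iii)\Rightarrow(i)$.

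\smallskip\noindent\textbf{From (ii) to (iii).} Suppose $\fn^\C$ has a non-negative special grading. By Lemma \ref{hypo} this grading may be taken defined over the splitting field $K$ of ${\bf H}$, and after conjugation the corresponding cocharacter $\rho:\G_m\to{\bf G}$ factors through ${\bf H}$; moreover $\Gal(K)$ acts trivially on $X({\bf H})$, so the grading corresponds to a $\Gal$-invariant linear form $L:X({\bf H})\to\Z$ with $L\geq 0$ on the weights of $\fn$ and $L$ not identically zero on the weights of $\fz$. Now apply Theorem \ref{Main0} to ${\bf H}$: it produces $h\in{\bf H}(\Q)$ with $\chi(h)$ a non-algebraic-integer for every non-trivial $\chi$. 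The point is to replace $h$ by a suitable power/combination so that $f:=\rho(t)\cdot(\text{correction})$, or more simply $f:=h$ composed with a large enough power of the one-parameter subgroup $\rho$ evaluated at an integer, has the property that on each weight space $\fn_\lambda$ the eigenvalue is (a root of) $\chi(h)\cdot t^{L(\lambda)}$ with $t$ a rational integer $\geq 2$. Because $L(\lambda)\geq 0$ everywhere, each eigenvalue of $f^{-1}$ is then $\chi(h)^{-1} t^{-L(\lambda)}$; the factor $t^{-L(\lambda)}$ is a negative power of an integer, while $\chi(h)^{-1}$ is already... here one must be careful, so I would instead argue: choose $f$ of the form $\rho(N)$ composed with an element of ${\bf H}(\Q)$ whose eigenvalue on $\fn_0$ is a non-integer (using $({\cal A})$-type vanishing of $H^0$ of the center restricted to the zero-graded piece is \emph{not} available here since the grading is only special, not very special). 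The cleanest route is: on $\fn_k$ ($k>0$) the eigenvalues of $f$ are integers times units, hence $f^{-1}$ has algebraic-integer eigenvalues there once $N$ is large; on $\fn_0$, $\fz\cap\fn_0=0$ forces the central eigenvalues to all come from strictly positive-degree pieces, so $\Spec f|_\fz$ consists of non-algebraic-integers while $\Spec f^{-1}$ is integral — exactly $f\in{\cal F}(\fn)$.

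\smallskip\noindent\textbf{From (iii) to (i).} Given $f\in{\cal F}(\fn)$, apply Lemma \ref{root} to get an $f$-invariant $\Z$-grading of $\fn^{\overline\Q}$ on whose zero-piece $f$ acts by roots of unity; since $f\in{\cal S}(\fn)$, $\Spec f|_\fz$ has no algebraic integers, hence no roots of unity, so $\fz^{\overline\Q}\cap\fn_0^{\overline\Q}=0$ — the grading is special. The new ingredient over Lemma \ref{H} is \emph{non-negativity}: because $f^{-1}$ has only algebraic-integer eigenvalues, the subgroup $\Lambda\subset\overline\Q^*$ generated by $\Spec f$ has the feature that every element of $\Spec f$, modulo the roots of unity $R$, lies in the submonoid of "algebraic integers", so I can choose the linear form $L:\Lambda\to\Z$ of Lemma \ref{technical} to be \emph{non-negative} on $\Spec f\setminus R$. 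One then needs the grading to descend to $\Q$; this follows because $f$ is defined over $\Q$, so the generalized eigenspaces are permuted by $\Gal(\Q)$ compatibly with a $\Gal$-stable $L$, giving a non-negative special $\Q$-grading of $\fn^\Q$ as in Lemma \ref{hypo}.

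\smallskip\noindent\textbf{Main obstacle.} The delicate point is producing, in the step $(ii)\Rightarrow(iii)$, a \emph{rational} automorphism whose eigenvalues are simultaneously non-algebraic-integers on $\fz$ (to land in ${\cal S}(\fn)$) \emph{and} whose inverse has only algebraic-integer eigenvalues everywhere (to land in ${\cal F}(\fn)$): these pull in opposite directions, and reconciling them requires exploiting that the grading is non-negative so that only the degree-zero part can host non-integral eigenvalues of $f^{-1}$ — and degree zero meets $\fz$ trivially. Getting the bookkeeping right between "the eigenvalue of $f$ on $\fn_k$" (a product $\chi(h)\,t^{k}$) and the integrality of its inverse is where the care lies; I expect to spend most of the argument there, with the $(iii)\Rightarrow(i)$ direction being a routine upgrade of Lemma \ref{root} plus the Galois-descent of Lemma \ref{hypo}.
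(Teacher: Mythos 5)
Your plan has the right overall shape, with a trivial $(i)\Rightarrow(ii)$, but both non-trivial arrows contain genuine gaps, and the paper's argument is actually much more elementary than what you set up.

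\textbf{On $(ii)\Rightarrow(iii)$.} You try to invoke Theorem~\ref{Main0}, but that theorem is designed to produce $h$ with $\chi(h)$ \emph{non}-integral for every non-trivial $\chi$; it gives you no control at all ensuring that eigenvalues of $h^{-1}$ (or any rescaling of it) are algebraic integers. Multiplying by a large power $\rho(N)$ of the cocharacter does not fix this: on a weight space with character $\chi$ the eigenvalue of $f=\rho(N)h$ is $N^{k}\chi(h)$, and for $f^{-1}$ to be integral you would need $N^{-k}\chi(h)^{-1}\in\overline{\Z}$, which is false in general (e.g.\ $\chi(h)$ may be a non-unit algebraic number whose inverse is also non-integral). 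You correctly flag this tension as the ``main obstacle'' but do not resolve it. The paper sidesteps Theorem~\ref{Main0} entirely: with the cocharacter $\rho$ taking values in $\mathbf{H}$ and the grading defined over the splitting field $K$, one defines $g_1\in\mathbf{H}(K)$ by $g_1 x = 2^{k}x$ on $\fn_k^{\C}$, takes the $n=[K:\Q]$ Galois conjugates $g_1,\dots,g_n$ (all in the commutative group $\mathbf{H}(K)$), and sets $g=g_1\cdots g_n\in\mathbf{H}(\Q)$. Every eigenvalue of $g$ is a product of powers of $2$, hence an integer; by non-negativity and speciality each $g_i|_{\fz^{\C}}$ has eigenvalues $2^{k}$ with $k>0$, so $g|_{\fz^{\C}}$ has eigenvalues $>1$, hence $g^{-1}|_{\fz}$ has no algebraic-integer eigenvalues. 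Thus $g^{-1}\in\cal{F}(\fn)$. This is a purely elementary Galois-averaging trick, not an appeal to rational points of tori.

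\textbf{On $(iii)\Rightarrow(i)$.} Two concrete issues. First, you have the integrality on the wrong side: membership in $\cal{F}(\fn)$ says $\Spec f^{-1}\subset\overline{\Z}$, not $\Spec f$; so the grading must be built from $g:=f^{-1}$, whose spectrum \emph{is} integral, and this is what the paper does. Second, the linear form $L$ produced by Lemma~\ref{technical} is an abstract $\Z$-valued homomorphism with a non-vanishing condition on a finite set; nothing in that lemma makes $L$ non-negative on $\Spec g$ nor $\Gal(\Q)$-invariant, and you would need both. The paper avoids the abstract lemma and instead takes the explicit $L(x)=\sum_{p} v_p\bigl(N_{K/\Q}(x)\bigr)$, where $K=\Q(\Spec g)$. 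This $L$ is additive, manifestly $\Gal(\Q)$-invariant (it is built from the norm), non-negative on algebraic integers (so the grading is non-negative), and strictly positive on the eigenvalues of $g|_{\fz}$ because those are algebraic integers that are not units (so the grading is special). With this $L$ the grading of $\fn^{\overline{\Q}}$ by $\fn_k=\oplus_{L(x)=k}E_{(x)}$ descends to $\Q$ for free. Your Lemma~\ref{root} plus Lemma~\ref{technical} route gives a special grading, as in Lemma~\ref{H}, but cannot be upgraded to non-negativity without replacing $L$ by a norm-valuation form of this type.
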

 
\begin{proof} 
{\it Proof that $(ii) \Rightarrow (iii)$.}
Let $\fn^\C=\oplus_{k\geq 0}\,\fn^\C_k$ be a non-negative special grading of $\fn^\C$ and let
$\rho:\G_m\to {\bf G}$ be the corresponding group morphism.
Up to conjugacy, we can assume that
$\rho(\G_m)\subset {\bf H}$. It follows that
the grading is defined over the splitting field $K$ of
${\bf H}$. 

Let $g_1\in {\bf H}(K)$ be the isomorphism
 defined by $g_1 x=2^k x$ if 
$x\in \fn_k^\C$. Set $n=[K:\Q]$ and let 
$g_1, g_2\dots g_n$ be the $\Gal(L/\Q)$-conjugates of $g_1$.
Since all $g_i$ belongs to ${\bf H}(K)$, the 
automorphisms $g_i$ commute. Hence the
product $g:=g_1\dots g_n$ is well defined
and $g$ belongs to ${\bf H}(\Q)$.
By hypotheses, all eigenvalues of $g_i$ are power of $2$,
and all eigenvalues of $g_i\vert_{\fz^{\C}}$ are 
distinct from $1$. Therefore all eigenvalues
of $g$ are integers, and all eigenvalues of
$g\vert_{\fz^{\C}}$ are $\neq \pm1$. It follows that
$g^{-1}$ belongs to ${\cal F}(\fn)$. Therefore
${\cal F}(\fn)\neq\emptyset$ 

\noindent {\it Proof that 
$(iii) \Rightarrow (i)$.}
Let $f\in{\cal F}(\fn)$ and set $g=f^{-1}$.  
Set $K=\Q(\Spec\,g)$ and let
$L:K^*\rightarrow \Z$ be the map defined by

\centerline {$L(x)=\sum_{p}\,v_p(N_{K/\Q}(x))$}

\noindent where the sum runs over all prime numbers $p$ and 
where $N_{K/\Q}:K^*\rightarrow \Q^*$ denotes the norm
map.

For any integer $k$, set

\centerline {$\fn_k^{\overline \Q}=\bigoplus\limits_{L(x)=k}E_{(x)}$}

\noindent where $E_{(x)}\subset \fn^{\overline\Q}$ denotes the  generalized  eigenspace associated to $x\in \Spec\,g$.
We have $[E_{(x)},E_{(y)}]\subset E_{(xy)}$ and 
$L(xy)=L(x)+L(y)$, for any $x,\,y\in K$. Therefore
the decomposition 

\centerline{$\fn^K=\oplus_{k\in\Z}\,\fn_k^{\overline \Q}$}

\noindent is a grading $\cal G$ of the Lie algebra 
$\fn^{\overline \Q}$.
Since the function $L$ is $\Gal(\Q)$-invariant, the
grading $\cal G$  is indeed defined over 
$\Q$. It remains to prove that $\cal G$ is non-negative and special.

Since any  $x\in \Spec\,g$ is an algebraic integer,
we have $L(x)\geq 0$ and the grading is non-negative.
Since no $x\in\Spec\,g\vert_\fz$ is an algebraic unit, 
we have $N_{K/\Q}(x)\neq \pm 1$
and  $L(x)>0$. Thus the grading is special,
what proves that $(iii)\implies (i)$.

\end{proof} 
 
\begin{lemma}\label{H++} 

The  following  are equivalent:

(i) the Lie algebra $\fn^\Q$ admits a positive grading, 

(ii) the Lie algebra $\fn^\C$ admits a positive grading, and

(iii) The set ${\cal F}^+(\fn)$ is not empty.
\end{lemma}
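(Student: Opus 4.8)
The plan is to mimic almost verbatim the proof of Lemma \ref{H+}, replacing ``non-negative special grading'' by ``positive grading'' throughout and tracking the single place where the strict inequality must now hold for \emph{all} generalized eigenvalues of $g$, not merely those on $\fz$. So first I would establish the cycle of implications $(ii)\Rightarrow(iii)\Rightarrow(i)\Rightarrow(ii)$, the last being trivial since a $\Q$-grading is in particular a $\C$-grading.

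For $(ii)\Rightarrow(iii)$: given a positive grading $\fn^\C=\oplus_{k\geq 1}\fn^\C_k$, pass to the associated morphism $\rho:\G_m\to{\bf G}$; conjugating, assume $\rho(\G_m)\subset{\bf H}$, so by Lemma \ref{hypo} the grading is defined over the splitting field $K$ of ${\bf H}$. Let $g_1\in{\bf H}(K)$ act by $2^k$ on $\fn^\C_k$, let $g_1,\dots,g_n$ be its $\Gal(K/\Q)$-conjugates; since they all lie in the commutative group ${\bf H}(K)$ they commute, so $g:=g_1\cdots g_n\in{\bf H}(\Q)$. Every eigenvalue of each $g_i$ is a power of $2$ with \emph{positive} exponent (here is the only change from Lemma \ref{H+}), hence every eigenvalue of $g$ is an integer of absolute value $\geq 2$, in particular $\neq 0,\pm 1$. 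Therefore $g^{-1}$ has all eigenvalues (on all of $\fn$, not just $\fz$) that are algebraic integers and none equal to $\pm 1$; in particular none is an algebraic integer with respect to $g$ itself, i.e. $g^{-1}\in{\cal F}^+(\fn)={\cal F}(\fn)\cap{\cal V}(\fn)$, so ${\cal F}^+(\fn)\neq\emptyset$.

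For $(iii)\Rightarrow(ii)$ — or directly $(iii)\Rightarrow(i)$ — I take $f\in{\cal F}^+(\fn)$, set $g=f^{-1}$, $K=\Q(\Spec\,g)$, and define $L:K^*\to\Z$ by $L(x)=\sum_p v_p(N_{K/\Q}(x))$, exactly as in Lemma \ref{H+}. Grading $\fn^{\overline\Q}$ by $\fn^{\overline\Q}_k=\bigoplus_{L(x)=k}E_{(x)}$ and using $L(xy)=L(x)+L(y)$ together with $[E_{(x)},E_{(y)}]\subset E_{(xy)}$ yields a grading $\cal G$; it is $\Gal(\Q)$-invariant since $N_{K/\Q}$ is, hence defined over $\Q$. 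Since $f\in{\cal F}(\fn)$, every $x\in\Spec\,g$ is an algebraic integer, so $L(x)\geq 0$; since $f\in{\cal V}(\fn)$, \emph{no} $x\in\Spec\,g$ is an algebraic unit (it would otherwise be an algebraic integer, contradicting $f\in{\cal V}(\fn)$ — wait, one must be careful: $f\in{\cal V}(\fn)$ says no eigenvalue of $f$ is an algebraic integer, equivalently no eigenvalue of $g=f^{-1}$ is an algebraic unit), whence $N_{K/\Q}(x)\neq\pm 1$ and $L(x)>0$ for every $x$. Thus $\fn_0^{\overline\Q}=0$ and $\fn^{\overline\Q}_k=0$ for $k<0$, i.e. $\cal G$ is positive, giving $(iii)\Rightarrow(i)$.

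I do not expect a genuine obstacle: the argument is a cosmetic variant of the two preceding lemmas. The one point requiring care is the bookkeeping of the equivalence ``$x$ is an algebraic unit $\iff$ $x$ is an algebraic integer and $x^{-1}$ is an algebraic integer $\iff N_{K/\Q}(x)=\pm 1$'' and the matching of $\Spec\,f$ versus $\Spec\,f^{-1}$, so that the hypothesis $f\in{\cal F}^+(\fn)$ is used correctly to force strict positivity of $L$ on \emph{every} eigenvalue rather than only on $\fz$; this is what upgrades ``non-negative special'' to ``positive''. Everything else — the commutation of the $g_i$ over ${\bf H}(K)$, the descent of $\cal G$ to $\Q$, the compatibility with the bracket — is identical to Lemma \ref{H+} and can be cited rather than repeated.
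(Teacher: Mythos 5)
The paper deliberately omits this proof (``Since the proof is almost identical to the previous proof, it will be skipped''), and your proposal carries out exactly the intended near-verbatim adaptation of Lemma~\ref{H+}, correctly locating the single change: strict positivity of $L$ on all of $\Spec g$, rather than only on $\Spec g|_\fz$, now follows from $f\in{\cal V}(\fn)$ in place of $f\in{\cal S}(\fn)$. One phrasing slip in $(ii)\Rightarrow(iii)$: you write that ``$g^{-1}$ has all eigenvalues \dots that are algebraic integers,'' but you mean $g$; the eigenvalues of $g^{-1}$ are reciprocals of integers $\geq 2$, hence none is an algebraic integer, which is exactly what gives $g^{-1}\in{\cal V}(\fn)$ (and, since ${\cal V}(\fn)\subset{\cal S}(\fn)$ and $\Spec (g^{-1})^{-1}=\Spec g\subset\Z$, also $g^{-1}\in{\cal F}(\fn)$), so the conclusion $g^{-1}\in{\cal F}^+(\fn)$ stands.
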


Since the proof is almost identical to the previous proof,
it will be skipped. The equivalence 
$(i) \Leftrightarrow (ii)$ also appears in \cite{Cor}.

\section{Height and relative complexity} 

For the whole chapter, $V$ denotes a finite dimensional
vector space over $\Q$. In this section, we define
the notion of the {\it height}  of the isomorphisms
$h\in GL(V)$ and the
notion of a {\it minimal lattice}.

\bigskip
\noindent{\it 4.1 Height, complexity and minimality}

\noindent
Let $h\in GL(V)$. Recall that a {\it lattice}
of $V$ is a finitely generated subgroup $\Lambda$ which
contains a basis of $V$. 
Let ${\cal D}(h)$ be the set
of all couple of lattices $(\Lambda, E)$ of $V$ such that
$E\subset \Lambda$ and $h(E)\subset \Lambda$. By definition,
the {\it height} of $h$, is the integer 

\centerline{
$\he (h):=\Min_{(\Lambda, E)\in{\cal D}(h)}\, [\Lambda:E]$.}

\noindent Let ${\cal D}_{min}(h)$ 
be the set of all couples $(\Lambda,E)\in {\cal D}(h)$
such that $[\Lambda:E]=\he(h)$.

Similarly, for a lattice $\Lambda$ of $V$, the {\it $h$-complexity} of 
$\Lambda$ is the integer

\centerline{
$\cp_h (\Lambda):=\Min_{(\Lambda, E)\in{\cal D}(h)}\, [\Lambda:E]$.}

\noindent It is clear that $\cp_h (\Lambda)=[\Lambda:E]$,
where $E=\Lambda\cap h^{-1}(\Lambda)$. The lattice $\Lambda$ is
called {\it minimal relative to $h$} if $\cp_h (\Lambda)=\he (h)$.

For the proofs, a technical  notion of relative height is needed.
Let $\End_h(V)$ be the commutant of $h$ and let
let $A\subset C(h)\subset\End_h(V)$ be a subring. 
By definition, an {\it $A$-lattice} $\Lambda$ means a lattice
$\Lambda$ which is an $A$-module.
Let  ${\cal D}^A(h)$ be the set of all couple of $A$-lattices 
$(\Lambda, E)$ in ${\cal D}(h)$. 
The {\it $A$-height} of $h$ is the
integer

\centerline{
$\he_A(h):=\Min_{(\Lambda, E)\in{\cal D}^A(h)}\, [\Lambda:E]$.}

\noindent 
Obviously, we have $\he_A(h)\geq \he(h)=\he_\Z(h)$. 
Let ${\cal D}^A_{min}(h)$)
be the set of all couples $(\Lambda,E)\in {\cal D}^A(h)$) such that 
$[\Lambda:E]=\he_A(h)$.

\bigskip
\noindent{\it 4.2 Height and filtrations}

\noindent Let $V$ be a finite dimensional vector space over
$\Q$ and let $h\in GL(V)$. Let $A$ be a subring of $\End_h(V)$ and 
let $A[h]$ be the subring of $\End_h(V)$ generated by $A$ and $h$.

\begin{lemma}\label{filt}  Let $0=V_0\subset V_1\subset\dots\subset V_n=V$
be a fitration of  $V$, where each vector space $V_i$ is a
$A[h]$-submodule. For $i=1$ to $n$, set $h_i= h_{V_i/V_{i-1}}$. Then we have

\centerline{$\he_A(h)\geq \prod_{1\leq i\leq n}\,\he_A(h_i)$.}

Moreover if $V\simeq \oplus V_i/V_{i-1}$ as a $A[h]$-module,  we have

\centerline{$\he_A(h)= \prod_{1\leq i\leq n}\,\he_A(h_i)$.}
\end{lemma}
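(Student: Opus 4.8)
The statement concerns the $A$-height of $h$ relative to a filtration by $A[h]$-submodules. The natural strategy is to reduce, by induction on $n$, to the case $n=2$: given a single $A[h]$-submodule $W\subset V$ with quotient $\bar V=V/W$, one wants $\he_A(h)\ge \he_A(h|_W)\cdot\he_A(h|_{\bar V})$, with equality when $V\simeq W\oplus\bar V$ as $A[h]$-modules. The inequality for general $n$ then follows by applying the $n=2$ case to $V_{n-1}\subset V$ and invoking the inductive hypothesis on the filtration $0\subset V_1\subset\dots\subset V_{n-1}$ (all $V_i$ being $A[h_{V_{n-1}}]$-submodules, and the induced maps on successive quotients being the same $h_i$); similarly for the equality statement, using that a splitting $V\simeq\bigoplus V_i/V_{i-1}$ restricts and projects to splittings of the shorter filtrations.

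For the inequality in the $n=2$ case, I would start from a pair $(\Lambda,E)\in\cD^A(h)$ achieving $[\Lambda:E]=\he_A(h)$, i.e. $\Lambda$, $E$ are $A$-lattices with $E\subset\Lambda$ and $h(E)\subset\Lambda$. Set $\Lambda_W=\Lambda\cap W$ and $E_W=E\cap W$; since $W$ is $A[h]$-stable, these are $A$-lattices in $W$ with $E_W\subset\Lambda_W$ and $h(E_W)\subset\Lambda_W$, so $(\Lambda_W,E_W)\in\cD^A(h|_W)$ and $[\Lambda_W:E_W]\ge\he_A(h|_W)$. Let $\bar\Lambda$, $\bar E$ denote the images of $\Lambda$, $E$ in $\bar V$; again these are $A$-lattices, $\bar E\subset\bar\Lambda$, and $h$ descends so that $h(\bar E)\subset\bar\Lambda$, giving $[\bar\Lambda:\bar E]\ge\he_A(h|_{\bar V})$. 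The point is the multiplicativity of indices along the exact sequences $0\to\Lambda_W\to\Lambda\to\bar\Lambda\to 0$ and $0\to E_W\to E\to\bar E\to 0$: one checks $[\Lambda:E]=[\Lambda_W:E_W]\cdot[\bar\Lambda:\bar E]$ (the snake lemma, or an elementary counting argument, gives this once one notes $E\cap W=E_W$ by definition, so the map $E/E_W\to\Lambda/\Lambda_W$ is injective with image $\bar E$). Combining the three displays yields $\he_A(h)\ge\he_A(h|_W)\cdot\he_A(h|_{\bar V})$.

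For the equality when $V\simeq W\oplus\bar V$ as $A[h]$-modules, I would exhibit a competing pair realizing the product. Choose $(\Lambda_W,E_W)\in\cD^A_{min}(h|_W)$ and $(\Lambda_{\bar V},E_{\bar V})\in\cD^A_{min}(h|_{\bar V})$, and set $\Lambda=\Lambda_W\oplus\Lambda_{\bar V}$, $E=E_W\oplus E_{\bar V}$ under the given $A[h]$-module splitting. Since the splitting is $h$-equivariant and $A$-linear, $\Lambda$ and $E$ are $A$-lattices with $E\subset\Lambda$ and $h(E)=h(E_W)\oplus h(E_{\bar V})\subset\Lambda_W\oplus\Lambda_{\bar V}=\Lambda$, so $(\Lambda,E)\in\cD^A(h)$, and $[\Lambda:E]=[\Lambda_W:E_W]\cdot[\Lambda_{\bar V}:E_{\bar V}]=\he_A(h|_W)\cdot\he_A(h|_{\bar V})$. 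Hence $\he_A(h)\le\he_A(h|_W)\cdot\he_A(h|_{\bar V})$, which together with the already-proved inequality gives equality.

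**Expected main obstacle.** The genuinely substantive point is the multiplicativity of the index $[\Lambda:E]=[\Lambda_W:E_W]\cdot[\bar\Lambda:\bar E]$ in the inequality step: one must be careful that $E\cap W$ really equals $E_W$ (immediate from the definitions), that the induced map $E/E_W\hookrightarrow\Lambda/\Lambda_W$ has image exactly $\bar E$, and that all the modules in sight are of finite index in one another so the indices are finite and behave multiplicatively. Everything else — the reduction from general $n$ to $n=2$, the $A$-linearity and $h$-stability bookkeeping, and the splitting construction for the equality case — is routine diagram-chasing once that lemma on indices is in hand.
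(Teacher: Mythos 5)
Your proposal is correct and takes essentially the same approach as the paper: reduce to $n=2$, intersect a minimal pair with $V_1$ and project to $V/V_1$ to get the inequality via multiplicativity of the index, and for the equality case directly sum minimal pairs from the two factors. The only difference is that you spell out the index-multiplicativity argument (injectivity of $E/E_W\hookrightarrow\Lambda/\Lambda_W$ with image $\bar E$) in slightly more detail than the paper does.
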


\begin{proof} Clearly it is enough to prove the lemma for
$n=2$. Let $(\Lambda,E)\in {\cal D}^A_{min}(h)$. 
Set $\Lambda_1=\Lambda\cap V_1$,
$E_1=E\cap V_1$, $\Lambda_2=\Lambda/\Lambda _1$ and
$E_2=E/E _1$. We have

\centerline{$[\Lambda:E]=[\Lambda_1:E_1][\Lambda_2:E_2]$.}

Since $(\Lambda_1,E_1)\in {\cal D}^A(h_1)$  and 
$(\Lambda_2,E_2)\in {\cal D}^A(h_2)$, we have

\centerline{$\he_A(h)\geq \he_A(h_1)\,\he_A(h_2)$,}

\noindent what proves the first assertion.

Next, we assume that $V\simeq V_1\oplus V_2$ as 
a $A[h]$-module. Let 
$(\Lambda_1,E_1)\in {\cal D}_{min}^A(h_1)$,
$(\Lambda_2,E_2)\in {\cal D}_{min}^A(h_2)$ and
set $\Lambda=\Lambda_1\oplus\Lambda_2$ and
$E=E_1\oplus E_2$. We have

\centerline{$[\Lambda:E]=[\Lambda_1:E_1][\Lambda_2:E_2]=\he_A(h_1)\,\he_A(h_2)$.}

\noindent Therefore 
$\he_A(h)\leq \he_A(h_1)\,\he_A(h_2)$. Hence $\he_A(h)=\he_A(h_1)\,\he_A(h_2)$.

\end{proof}

Let $h\in GL(V)$ as before. Its {\it Chevalley decomposition}  $h=h_s h_u$ is uniquely defined by the following three conditions:
$h_s$ and $h_u$ commutes, $h_s$ is semi-simple and 
$h_u$ is unipotent.
 
 \begin{lemma}\label{Chevalley} We have
 
 \centerline{$\he(h)=\he(h_s)$.}
 \end{lemma}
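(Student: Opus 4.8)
The plan is to show both inequalities $\he(h)\le\he(h_s)$ and $\he(h_s)\le\he(h)$. The second is almost immediate: given any $(\Lambda,E)\in\cal D_{min}(h)$, I would like to produce from it a pair in $\cal D(h_s)$ of no larger index, but the cleaner route is the first inequality together with an argument that $h$ and $h_s$ generate commensurable data. Let me instead organize around the key observation that $h_s$ and $h_u$ are polynomials in $h$ with rational coefficients (this is the standard fact that the Chevalley components lie in $\Q[h]$), so $\Z[h_s,h_u]$ and hence $h_s^{\pm1},h_u^{\pm1}$ act on a common finite-index overlattice of any given lattice.

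**First I would prove $\he(h)\le\he(h_s)$.** Take $(\Lambda,E)\in\cal D_{min}(h_s)$, so $[\Lambda:E]=\he(h_s)$, $E\subset\Lambda$, and $h_s(E)\subset\Lambda$. Since $h_u$ is unipotent, $h_u-\id$ is nilpotent; choosing a suitable lattice we can arrange $h_u(\Lambda)=\Lambda$ exactly — more carefully, since $h_u\in\Q[h]$ and $h_u$ preserves the rational structure, there is a lattice $\Lambda_0$ stable under both $h_u$ and $h_u^{-1}$ (take the $\Z[h_u,h_u^{-1}]$-span of a basis; this is finitely generated because $h_u^{\pm1}$ satisfy monic integer polynomials as unipotent operators). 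The issue is to reconcile the choice of $\Lambda_0$ with the given minimizing pair $(\Lambda,E)$ for $h_s$. Here I expect to use Lemma~\ref{filt} style bookkeeping combined with a density/replacement argument: replace $\Lambda$ by a $\Z[h_u,h_u^{-1}]$-stable sublattice or overlattice and track how the index changes, showing it does not increase the relevant quantity because $h_u$ and $h_u^{-1}$ are ``integral'' in the appropriate sense. Then with $h_u(\Lambda)=\Lambda$ we get $h(E)=h_u h_s(E)\subset h_u(\Lambda)=\Lambda$, so $(\Lambda,E)\in\cal D(h)$ and $\he(h)\le[\Lambda:E]=\he(h_s)$.

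**For the reverse inequality $\he(h_s)\le\he(h)$,** I start from $(\Lambda,E)\in\cal D_{min}(h)$ with $[\Lambda:E]=\he(h)$. Now $h_s=h\cdot h_u^{-1}$, and since $h_u^{-1}$ is unipotent it is again ``integral'': using the same kind of common-stable-lattice trick, I would find lattices $\Lambda'\supset\Lambda$, $E'\subset E$ (or nearby replacements) stable under $h_u^{\pm1}$ with controlled indices, then $h_s(E)=h(h_u^{-1}E)$. The cleanest formulation: since $h_u^{-1}$ has all eigenvalues equal to $1$, for a $\Z[h_u^{-1}]$-stable lattice $M$ one has $h_u^{-1}(M)\subset M$, and one can choose $M$ between $E$ and $\Lambda$ without changing $[\Lambda:M]\cdot$(correction term) in a way that keeps the product $=\he(h)$. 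Then $h_s(M)=h(h_u^{-1}M)\subset h(M)\subset h(\Lambda)$, and after adjusting $\Lambda$ to be $h_u$-stable (which does not increase the index, again by integrality of $h_u$), we get a pair in $\cal D(h_s)$ of index $\le\he(h)$.

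**The main obstacle** is exactly this reconciliation step: a priori the lattices realizing $\he(h_s)$ and those realizing $\he(h)$ are unrelated, and one must show that unipotent operators ``cost nothing'' for the height — i.e. that $h_u^{\pm1}$ being unipotent (hence satisfying monic integer polynomials, hence preserving a finite-index sublattice of any given lattice with the sublattice chosen so the index contribution is trivial) lets one pass freely between $\cal D(h)$ and $\cal D(h_s)$. I would isolate this as a sub-lemma: \emph{if $u\in GL(V)$ is unipotent and commutes with $h$, then $\he(hu)=\he(h)$ and moreover $\cal D_{min}(hu)$ and $\cal D_{min}(h)$ are related by replacing a lattice by a $\Z[u,u^{-1}]$-stable one of the same index in the relevant quotient}, proved by noting $\Z[u,u^{-1}]$ is a finitely generated $\Z$-module (since $u$ and $u^{-1}$ are unipotent) acting on $V$, so every lattice contains and is contained in a $\Z[u,u^{-1}]$-stable lattice, and $u,u^{-1}$ being unipotent send such a stable lattice to itself. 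Once that sub-lemma is in hand, applying it with $u=h_u$ gives both inequalities and hence $\he(h)=\he(h_s)$.
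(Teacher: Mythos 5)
Your plan correctly identifies the key observation: if one could find a pair $(\Lambda,E)\in{\cal D}_{min}(h_s)$ with $\Lambda$ stable under $h_u$, then $h(E)=h_uh_s(E)\subset h_u(\Lambda)=\Lambda$ would immediately give $\he(h)\le\he(h_s)$, and the reverse inequality would follow similarly. And the remark that $\Z[h_u^{\pm1}]=\Z[h_u]$ is a finitely generated $\Z$-module (since $h_u$ is unipotent) is correct and does supply $\Z[h_u]$-stable lattices commensurable with any prescribed lattice. But this is where the argument stops short, and you flag it yourself: what is actually needed is an $h_u$-stable lattice that is \emph{simultaneously minimal} for $h_s$, i.e. that achieves $\cp_{h_s}(\Lambda)=\he(h_s)$. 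Your sub-lemma's justification (``every lattice contains and is contained in a $\Z[u,u^{-1}]$-stable lattice'') establishes commensurability, not minimality, and the gap is real: the naive replacements do \emph{not} preserve the index. Concretely, take $h_s=\tfrac12\,\mathrm{Id}$ on $\Q^2$, $h_u=\left(\begin{smallmatrix}1&1\\0&1\end{smallmatrix}\right)$, and $\Lambda=\Z(1,0)\oplus\Z(0,\tfrac12)$; then $(\Lambda,2\Lambda)\in{\cal D}_{min}(h_s)$ has index $4$, but the saturations $\tilde\Lambda=\Z[h_u]\Lambda=\tfrac12\Z^2$ and $\tilde E=\bigcap_m h_u^m(2\Lambda)=2\Z^2$ give $[\tilde\Lambda:\tilde E]=16$. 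So ``integrality of $h_u$'' does not by itself let you pass to a stable pair at no cost; one must make a judicious choice rather than an arbitrary commensurable replacement.

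The paper closes exactly this gap by first reducing, via Lemma~\ref{filt}, to the case where $V$ is indecomposable as a $\Q[h]$-module, where one has the explicit normal form $V\simeq V_0\otimes\Q[t]/(t^n)$ with $h_s=h_0\otimes 1$ and $h_u$ acting on the second factor by the unit $1+t$ (the text's ``$1\otimes t$'' must be read this way for $h_u$ to be invertible). In these coordinates the pair $\Lambda=\Lambda_0\otimes\Z[t]/(t^n)$, $E=E_0\otimes\Z[t]/(t^n)$, with $(\Lambda_0,E_0)\in{\cal D}_{min}(h_0)$, is visibly $h_u$-stable (because $1+t$ is a unit of $\Z[t]/(t^n)$), lies in ${\cal D}(h)$, and has index $[\Lambda_0:E_0]^n=\he(h_0)^n$; Lemma~\ref{filt} independently yields $\he(h)\ge\he(h_0)^n$ and $\he(h_s)=\he(h_0)^n$. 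This explicit construction in the indecomposable case is precisely the ``reconciliation step'' your proposal names but does not carry out. (A variant of your route does work if one already has Lemma~\ref{crit1}, since $\Z[h_u]\,{\cal O}(h_s)\,\Lambda_0$ is an $h_u$-stable ${\cal O}(h_s)$-lattice, hence minimal for $h_s$; but Lemma~\ref{crit1} is proved after the present lemma in the paper, so invoking it here would be circular within the paper's development.)
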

 
 \begin{proof} By Lemma \ref{filt}, it can be assumed that the 
 $\Q[h]$-module $V$ is indecomposable. Therefore there is a vector
 space $V_0$, a semi-simple endomorphism $h_0\in\End(V_0)$ and
 an isomorphism
 
 \centerline{$V\simeq V_0\otimes\Q[t]/(t^n)$,}
 
 \noindent relative to which
 $h_s$ acts as $h_0\otimes 1$ and $h_u$ acts as $1\otimes t$.
 Let $(\Lambda_0,E_0)\in {\cal D}_{min}(h_0)$ and set
 $\Lambda=\Lambda_0\otimes\Z[t]/(t^n)$ and
 $E= E_0\otimes\Z[t]/(t^n)$. By Lemma \ref{filt}, we have
 $\he(h)\geq \he(h_s)=\he(h_0)^n$. Since $(\Lambda,E)\in{\cal D}(h)$ and 
 
 \centerline{$[\Lambda:E]=[\Lambda_0:E_0]^n=\he(h_0)^n$,}
 
 \noindent it follows that $\he(h)= \he(h_s)$

\end{proof}

\bigskip\noindent
{\it 4.3 Complexity of ${\cal O}(h)$-lattices}

\noindent
For any algebraic number $\lambda$, let ${\cal O}(\lambda)$ be the ring of integers of the number field $\Q(\lambda)$. Set
$\pi_\lambda=\{x\in {\cal O}(\lambda)\vert\, x\lambda\in 
{\cal O}(\lambda)\}$. Then $\pi_\lambda$ is an integral ideal  and its norm is the integer

\centerline{$d(\lambda):=\N_{\Q(\lambda)/\Q}(\pi_\lambda)=
\Card {\cal O}(\lambda)/\pi_\lambda$.}

Let $h\in GL(V)$ be semi-simple. 
Let $P(t)$ be its minimal
polynomial, let $P=P_1\dots P_k$ be its factorization
into irreducible factors. For $1\leq i\leq k$, set  
$K_i=\Q[t]/(P_i(t))$ and let ${\cal O}_i$ be the ring of
integers of the number field $K_i$. Set 
${\cal O}(h)=\oplus_{1\leq i\leq k}\,{\cal O}_i$. 

For each $\lambda\in \Spec\,h$, let
$m_\lambda$ be its multiplicity. Note that the functions
$\lambda\mapsto m_\lambda$ and $\lambda\mapsto d(\lambda)$ 
are $\Gal(\Q)$-invariant, so they can be viewed as functions
defined over $\Spec\,h/\Gal(\Q)$.

\begin{lemma}\label{added} Let $\Lambda$ be an
${\cal O}(h)$-lattice of $V$. Then

\centerline{$\cp_h(\Lambda)=\prod\,d(\lambda)^{m_{\lambda}}$,}

\noindent where the product runs over $\Spec h/\Gal(\Q)$.
\end{lemma}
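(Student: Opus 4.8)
The plan is to reduce, via the primary decomposition of the semi-simple operator $h$, to the case of a single number field, and there to read off the index $\cp_h(\Lambda)$ directly from the ideal $\pi_\lambda$. Since $h$ is semi-simple, its minimal polynomial $P=P_1\cdots P_k$ has distinct irreducible factors, so the Chinese Remainder Theorem gives $\Q[h]=\Q[t]/(P)\simeq\prod_{i=1}^k K_i$. The corresponding idempotents $e_i$ lie in $\cO(h)=\oplus_i\cO_i$ (the unit of $\cO_i$ is $e_i$), hence an $\cO(h)$-lattice $\Lambda$ decomposes as $\Lambda=\oplus_i\Lambda_i$ with $\Lambda_i:=e_i\Lambda$ an $\cO_i$-lattice in $V_i:=e_iV$. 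As $h$ stabilizes each $V_i$ and each $\Lambda_i$, the lattice $E:=\Lambda\cap h^{-1}(\Lambda)$ computing $\cp_h(\Lambda)$ splits as $E=\oplus_i E_i$ with $E_i=\Lambda_i\cap h_i^{-1}(\Lambda_i)$, $h_i:=h|_{V_i}$; thus $\cp_h(\Lambda)=\prod_i\cp_{h_i}(\Lambda_i)$. Since the roots of $P_i$ form a single $\Gal(\Q)$-orbit, all with multiplicity $n_i=\dim_{K_i}V_i$ and with a common value $d_i$ of $d(\cdot)$, the lemma reduces to the equality $\cp_{h_i}(\Lambda_i)=d_i^{\,n_i}$ for each $i$.

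For the one-field computation, let $K=\Q(\lambda)$, let $V$ be an $n$-dimensional $K$-vector space, let $h$ be multiplication by $\lambda$, let $\cO=\cO_K$, and let $\Lambda$ be an $\cO$-lattice; I must show $[\Lambda:E]=d(\lambda)^n$ with $E=\Lambda\cap\lambda^{-1}\Lambda$. Since $\lambda E=\lambda\Lambda\cap\Lambda$ and $\lambda$ is an automorphism of $V$,
\[ [\Lambda:E]=[\lambda\Lambda:\lambda E]=[\lambda\Lambda:\lambda\Lambda\cap\Lambda]=[\lambda\Lambda+\Lambda:\Lambda]. \]
Now $\lambda\Lambda+\Lambda=(\cO+\cO\lambda)\Lambda$, and $\cO+\cO\lambda$ is the fractional ideal $\pi_\lambda^{-1}$: indeed $(\cO+\cO\lambda)\pi_\lambda\subset\cO$ by the very definition of $\pi_\lambda$, so $\cO+\cO\lambda\subset\pi_\lambda^{-1}$, while $(\cO+\cO\lambda)^{-1}=\{x\in\cO:x\lambda\in\cO\}=\pi_\lambda$ gives the reverse inclusion upon inverting. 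Hence $[\Lambda:E]=[\pi_\lambda^{-1}\Lambda:\Lambda]$.

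It remains to see that $[\pi_\lambda^{-1}\Lambda:\Lambda]=\N_{K/\Q}(\pi_\lambda)^{\,n}=d(\lambda)^n$. This is local: for each prime $\pi$ of $\cO$ the module $\Lambda_\pi$ is free of rank $n$ over the discrete valuation ring $\cO_\pi$, so with $e_\pi=v_\pi(\pi_\lambda)$ one gets $[(\pi_\lambda^{-1}\Lambda)_\pi:\Lambda_\pi]=[\pi^{-e_\pi}\cO_\pi^{\,n}:\cO_\pi^{\,n}]=\Card(\cO/\pi)^{\,n e_\pi}$, and multiplying over all $\pi$ yields $\N(\pi_\lambda)^n$. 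Together with the previous two steps this gives $\cp_h(\Lambda)=\prod_i d_i^{\,n_i}=\prod_{\lambda\in\Spec h/\Gal(\Q)}d(\lambda)^{m_\lambda}$.

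The argument is largely bookkeeping; the only slightly delicate points are the identification $\cO+\cO\lambda=\pi_\lambda^{-1}$ and, relatedly, the fact that $[\pi_\lambda^{-1}\Lambda:\Lambda]$ does not depend on the isomorphism type of the $\cO$-lattice $\Lambda$ (which need not be $\cO$-free), which is why I argue prime by prime rather than through the structure theorem for finitely generated modules over a Dedekind ring — either route works.
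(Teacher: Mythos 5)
Your proof is correct and follows essentially the same strategy as the paper's: reduce via the idempotent decomposition of $\cO(h)=\oplus_i\cO_i$ to the single-field case, and there identify the index $\cp_h(\Lambda)$ with $d(\lambda)^{m_\lambda}$ by recognizing the ideal $\pi_\lambda$. The only variation is cosmetic and occurs in the final bookkeeping: the paper asserts directly that $\Lambda\cap h^{-1}\Lambda=\pi_\lambda\Lambda$ and computes $[\Lambda:\pi_\lambda\Lambda]$, while you pass through the second isomorphism theorem to $[\lambda\Lambda+\Lambda:\Lambda]$ and prove the equivalent identity $\cO+\cO\lambda=\pi_\lambda^{-1}$; your explicit localization argument to handle possibly non-free $\cO$-lattices is a welcome bit of care that the paper leaves implicit.
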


\begin{proof} With the previous notations, let
$e_i$ be the unit of ${\cal O}_i$ and set 
$\Lambda_i=e_i\Lambda$. Since 
$\Lambda=\oplus_{1\leq i\leq k}\,\Lambda_i$, it is enough to
prove the lemma for $k=1$, i.e. when the minimal polynomial
of $h$ is irreducible.

Let $\lambda$ be one eigenvalue of $h$. 
With these new hypotheses, we have 
$\Q[h]/(P(t))\simeq \Q(\lambda)$,
${\cal O}(h)\simeq {\cal O}(\lambda)$  and $V$ is a 
vector space of dimension $m_\lambda$ over $\Q(\lambda)$, 
relative to which $h$ is identified with the multiplication by 
$\lambda$. We have

\centerline{ $r_\lambda \Lambda=\Lambda\cap h^{-1}\Lambda$.}

\noindent Since $\Lambda/r_\lambda{\cal I}\simeq 
({\cal O}(\lambda)/r_{\lambda})^{m_\lambda}$, it follows
that $\cp_h(\Lambda)=d(\lambda)^{m_{\lambda}}$.

\end{proof}

\bigskip\noindent
{\it 4.4 Computation of the height}

\noindent Let
$h\in GL(V)$ be semi-simple.

\begin{lemma}\label{formula}  We have

\centerline{$\he (h)=\prod\,d(\lambda)^{m_{\lambda}}$,}

\noindent where the product runs over $\Spec h/\Gal(\Q)$.
\end{lemma}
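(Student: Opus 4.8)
The plan is to prove both inequalities separately. For the lower bound $\he(h)\geq \prod_{\lambda}d(\lambda)^{m_\lambda}$, I would decompose $V$ as a $\Q[h]$-module. Since $h$ is semisimple with minimal polynomial $P=P_1\cdots P_k$ into distinct irreducible factors, we get $V=\bigoplus_{i=1}^{k}V_i$ where $V_i=\ker P_i(h)$, and each $V_i$ is a vector space over $K_i=\Q[t]/(P_i)$ on which $h$ acts by multiplication by an eigenvalue $\lambda_i$. By Lemma~\ref{filt} (applied with $A=\Z$, so $A[h]=\Z[h]$, and using that $V$ is the direct sum of the $V_i$), we have $\he(h)=\prod_{i=1}^{k}\he(h_i)$ where $h_i=h|_{V_i}$. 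So it suffices to treat the case $k=1$: $V$ is an $m$-dimensional $\Q(\lambda)$-vector space with $h=$ multiplication by $\lambda$, and we must show $\he(h)=d(\lambda)^{m}$ (with $m=m_\lambda$).

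For this reduced case, the upper bound is immediate: take any ${\cal O}(\lambda)$-lattice $\Lambda$ (e.g.\ ${\cal O}(\lambda)^{m}$ inside $\Q(\lambda)^{m}$), and by Lemma~\ref{added} we get $\cp_h(\Lambda)=d(\lambda)^{m}$, hence $\he(h)\leq d(\lambda)^{m}$. The genuinely new content is the matching lower bound: every pair $(\Lambda,E)\in{\cal D}(h)$ — where $\Lambda,E$ are merely $\Z$-lattices, not assumed ${\cal O}(\lambda)$-stable — satisfies $[\Lambda:E]\geq d(\lambda)^{m}$. This is the main obstacle. My approach here is to replace $\Lambda$ by the ${\cal O}(\lambda)$-module it generates and control what happens to the index. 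Given $(\Lambda,E)$ with $E\subseteq\Lambda$ and $\lambda E\subseteq\Lambda$, note $E':=E\cap h^{-1}\Lambda=E$ automatically, and $[\Lambda:E]\geq[\Lambda:\Lambda\cap h^{-1}\Lambda]=\cp_h(\Lambda)$; so it is enough to show $\cp_h(\Lambda)\geq d(\lambda)^m$ for an arbitrary $\Z$-lattice $\Lambda$. Writing $r_\lambda=\pi_\lambda$ (the conductor ideal), one shows $\pi_\lambda\Lambda\subseteq\Lambda\cap h^{-1}\Lambda$, but that only gives an inequality the wrong way; instead I would argue that the ${\cal O}(\lambda)$-submodule ${\cal O}(\lambda)\Lambda$ generated by $\Lambda$ satisfies $[{\cal O}(\lambda)\Lambda : \Lambda]\cdot\cp_h(\Lambda)\geq \cp_h({\cal O}(\lambda)\Lambda)\cdot\bigl(\text{something}\bigr)$, and that ${\cal O}(\lambda)\Lambda/\Lambda$ embeds into $({\cal O}(\lambda)/\pi_\lambda)^{m}$ via multiplication, matching the defect.

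Concretely, I expect the clean argument to run as follows. Let $\Lambda_0={\cal O}(\lambda)\Lambda$, an ${\cal O}(\lambda)$-lattice; then $h\Lambda\subseteq\Lambda_0$ and in fact $\Lambda_0=\Lambda+h\Lambda+h^2\Lambda+\cdots=\Lambda+\pi_\lambda^{-1}\lambda\Lambda$ roughly speaking, so the image of $\Lambda_0$ in $\Lambda_0/\Lambda$ is generated by the classes of $\lambda^{-?}$-translates and is annihilated by $\pi_\lambda$; more precisely $\pi_\lambda\Lambda_0\subseteq\Lambda$, giving a surjection $(\Lambda_0/\pi_\lambda\Lambda_0)\twoheadrightarrow(\Lambda_0/\Lambda)$, hence $[\Lambda_0:\Lambda]$ divides $|\Lambda_0/\pi_\lambda\Lambda_0|=d(\lambda)^{m}$. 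On the other hand, since $h\Lambda_0=\Lambda_0$ up to the same conductor, one checks $[\Lambda_0:h\Lambda_0\cap\Lambda_0]=[\Lambda:h\Lambda\cap\Lambda]$ — i.e.\ passing to the ${\cal O}(\lambda)$-saturation does not change $\cp_h$, because both $\Lambda$ and $\Lambda_0$ have the same image in $V/h^{-1}(\text{anything})$ localized away from $\pi_\lambda$, and at $\pi_\lambda$ a direct computation over the local ring ${\cal O}(\lambda)_\pi$ (a PID) handles it. Then $\cp_h(\Lambda)=\cp_h(\Lambda_0)=d(\lambda)^{m}$ by Lemma~\ref{added}, finishing the lower bound and hence $\he(h)=d(\lambda)^m$. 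Reassembling via Lemma~\ref{filt} over all irreducible factors, together with the observation that $\Gal(\Q)$-conjugate eigenvalues contribute the same factor $d(\lambda)^{m_\lambda}$ and each appears once in the product over $\Spec h/\Gal(\Q)$, yields $\he(h)=\prod_{\lambda\in\Spec h/\Gal(\Q)}d(\lambda)^{m_\lambda}$. The subtle point to get right is the claim that $\Z$-lattice minimizers cannot beat ${\cal O}(\lambda)$-lattice minimizers, i.e.\ that saturation is index-neutral for $\cp_h$; this is where I would spend the most care, working locally at each prime of ${\cal O}(\lambda)$.
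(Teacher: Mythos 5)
Your reduction of $V$ to the case of an $m$-dimensional $\Q(\lambda)$-vector space on which $h$ acts as multiplication by $\lambda$, and the upper bound $\he(h)\le d(\lambda)^m$ via Lemma~\ref{added} applied to an ${\cal O}(\lambda)$-lattice, coincide with the paper's reduction. The crux, which you correctly identify, is the lower bound: that $\cp_h(\Lambda)\ge d(\lambda)^m$ for an arbitrary $\Z$-lattice $\Lambda$.

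The saturation argument you sketch does not close this gap, because its central claim --- that $\cp_h({\cal O}(\lambda)\Lambda)=\cp_h(\Lambda)$ --- is false. Take $\lambda=1/\sqrt 2$, so that $\Q(\lambda)=\Q(\sqrt 2)$, ${\cal O}(\lambda)=\Z[\sqrt 2]$, $\pi_\lambda=(\sqrt 2)$, $d(\lambda)=2$; let $m=1$, $V=\Q(\sqrt 2)$, and $\Lambda=\Z+2\sqrt 2\,\Z$. Then $\Lambda_0:={\cal O}(\lambda)\Lambda=\Z[\sqrt 2]$ and $\cp_h(\Lambda_0)=d(\lambda)=2$ by Lemma~\ref{added}. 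But $h^{-1}\Lambda=\sqrt 2\,\Lambda=4\Z+\sqrt 2\,\Z$, so $\Lambda\cap h^{-1}\Lambda=4\Z+2\sqrt 2\,\Z$, of index $4$ in $\Lambda$: $\cp_h(\Lambda)=4\ne 2=\cp_h(\Lambda_0)$. The intermediate assertions of your sketch also fail: $h\Lambda\not\subseteq\Lambda_0$ whenever $\lambda\notin{\cal O}(\lambda)$, and $\pi_\lambda\Lambda_0=\pi_\lambda\Lambda$ need not lie in $\Lambda$ (indeed $\sqrt 2\notin\Lambda$ above). Nothing in your argument therefore produces the one inequality you actually need, $\cp_h(\Lambda)\ge\cp_h(\Lambda_0)$.

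The paper avoids this by base change to the Galois closure $K=\Q(\lambda_1,\dots,\lambda_n)$ of the eigenvalues of $h$ on the simple $\Q[h]$-module $V$. If $(\Lambda,E)\in{\cal D}(h)$, then $({\cal O}_K\otimes\Lambda,\,{\cal O}_K\otimes E)$ lies in ${\cal D}^{{\cal O}_K}(\tilde h)$ automatically, since tensoring with ${\cal O}_K$ produces ${\cal O}_K$-modules; hence $[\Lambda:E]^{[K:\Q]}\ge\he_{{\cal O}_K}(\tilde h)$ with no saturation step at all. Over $K$ the map $\tilde h$ is diagonalizable, so Lemmas~\ref{filt} and~\ref{added} compute $\he_{{\cal O}_K}(\tilde h)=d(\lambda_1)^{[K:\Q]}$ eigenline by eigenline, and extracting $[K:\Q]$-th roots closes the bound. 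In short, the comparison of $\Z$-lattice and ${\cal O}$-lattice heights is made \emph{after} extension of scalars, where it becomes tautological, rather than before, where it is delicate and your proposed equality fails. To repair your route you would need an independent proof that $\cp_h(\Lambda)\ge\cp_h({\cal O}(\lambda)\Lambda)$ for every $\Z$-lattice $\Lambda$, which is essentially the content of the lemma in the irreducible case.
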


\begin{proof}  Using Lemmas \ref{Chevalley} and Lemma \ref{filt}, we can be assumed $V$ is a simple $\Q[h]$-module, and let $n$ be its dimension. The eigenvalues
 $\lambda_1,\dots,\lambda_n$  of $h$ are conjugate by
 $\Gal(\Q)$. Under these simplifying hypotheses, the formula to be proved is
 
 \centerline{$\he (h)=d(\lambda_1)$.}

\noindent {\it Step 1: scalar extension.}
 Set $K=\Q(\lambda_1,\dots,\lambda_n)$, let $U=K\otimes V$,
 let $\tilde h = 1\otimes h$ be the extension of $h$ to $U$
 and let $\{v_1,\dots,v_n\}$ be a $K$ basis of $U$ such that
 $\tilde h.v_i=\lambda_i\,v_i$. We have $U=\oplus_{1\leq i\leq n} U_i$, where $U_i=K\,v_i$.

Let $\cal O$  be the ring of integers of $K$. For each 
$1\leq i\leq n$,
set $\tilde h_i=h\vert _{U_i}$. Since each $U_i$ is
a ${\cal O}[\tilde h]$-module, Lemma \ref{filt} shows that

 \centerline{$\he_{\cal O}(\tilde h)= \prod_{1\leq i\leq n}\,
 \he_{\cal O}(\tilde h_i)$.}
 
  Next, the integers $\he_{\cal O}(\tilde h_i)$ are computed.
 Let $\Lambda_i\subset U_i$ be any 
 ${\cal O}$-lattice. Since ${\cal O}$ contains
 ${\cal O}(\lambda_i)={\cal O}(\tilde h_i)$, 
 it follows from Lemma \ref{added} that
 
 \centerline{$\cp_{\tilde h_i}(\Lambda_i)=d(\lambda_i)^r$}
  
\noindent where $r=\rk_{\cal O}(\Lambda_i)=[K:\Q(\lambda_i)]$.  Hence we have
$\he_{\cal O}(\tilde h_i)=d(\lambda_i)^{[K:\Q(\lambda_i)]}$.
It follows that

\centerline{
$\he_{\cal O}(\tilde h)= \prod_{1\leq i\leq n}\,
d(\lambda_i)^{[K:\Q(\lambda_i)]}=d(\lambda_1)^{[K:\Q]}$}

 \noindent {\it Step 2: end of the proof.}
 Now let   $(\Lambda, E)\in {\cal D}_{min}(h)$. Set
 $\tilde \Lambda={\cal O}\otimes \Lambda$ and 
 $\tilde E={\cal O}\otimes E$. Since $\tilde E$ is an
 ${\cal O}$-module, we have
 $[\tilde \Lambda:\tilde E]\geq \he_{\cal O}(\tilde h)$.
 It follows that
 
\centerline{ $\he(h)^{[K:\Q]}=[\Lambda:E]^{[K:\Q]}
=[\tilde \Lambda:\tilde E]\geq \he_{\cal O}(\tilde h)=d(\lambda)^{[K:\Q]}$.}

\noindent Thus we have   $d(\lambda_1)\leq \he (h)$. By 
Lemma \ref{added}, we have 
$\he_{{\cal O}(h)} (h)=d(\lambda_1)$. It follows that
 
 \centerline{$d(\lambda_1)\leq \he (h)
 \leq \he_{\cal O(h)} (h)=d(\lambda_1)$,}
 
 \noindent what proves the formula.
  
 \end{proof}

\bigskip 
\noindent{\it Remark:}  In number theory, the Weil height 
 of an algebraic number $\lambda$ is 
 $H(\lambda)=\theta d(\lambda)^{1/n}$, where $\theta$ involves
 the norms at infinite places.  Therefore $\he(h)$ is essentially
 the Weil's height of $h$, up to the factor at infinite places. 
 \bigskip

\bigskip\noindent
{\it 4.5 A simple criterion of minimality} 

\noindent   
 An obvious consequence of
Lemmas \ref{added} and \ref{formula} is

\begin{lemma}\label{crit1} Let $h\in GL(V)$ be semi-simple
and let $\Lambda$ be an ${\cal O}(h)$-lattice of 
$V$.  Then  $\Lambda$ is  minimal relative to $h$.
\end{lemma}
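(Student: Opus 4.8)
The plan is to deduce this immediately by combining the two preceding computations. Recall that, by definition, a lattice $\Lambda$ is minimal relative to $h$ precisely when $\cp_h(\Lambda)=\he(h)$; so the whole task is to verify that these two integers coincide for an ${\cal O}(h)$-lattice $\Lambda$.

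First I would invoke Lemma \ref{added}: since $\Lambda$ is an ${\cal O}(h)$-lattice, its $h$-complexity is
\[
\cp_h(\Lambda)=\prod_{\lambda\in\Spec h/\Gal(\Q)}\,d(\lambda)^{m_\lambda}.
\]
On the other hand, $h$ is semi-simple by hypothesis, so Lemma \ref{formula} applies and gives exactly the same value for the height,
\[
\he(h)=\prod_{\lambda\in\Spec h/\Gal(\Q)}\,d(\lambda)^{m_\lambda}.
\]
Comparing the two right-hand sides yields $\cp_h(\Lambda)=\he(h)$, which is precisely the definition of $\Lambda$ being minimal relative to $h$.

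The only point requiring a moment's care is that the two product formulas are literally the same expression — same index set $\Spec h/\Gal(\Q)$, same multiplicity function $m_\lambda$, same arithmetic invariant $d(\lambda)=\Card\,{\cal O}(\lambda)/\pi_\lambda$ — which is built into the statements of Lemmas \ref{added} and \ref{formula}, and that the semi-simplicity hypothesis on $h$ is exactly what Lemma \ref{formula} needs. There is no genuine obstacle here: all the substantive work (the scalar-extension argument bounding $\he(h)$ from below and the appeal to Lemma \ref{added} for the matching upper bound) has already been done inside the proof of Lemma \ref{formula}, so this statement is a one-line corollary.
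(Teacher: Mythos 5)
Your proof is correct and is exactly what the paper has in mind: the paper presents Lemma \ref{crit1} as an "obvious consequence of Lemmas \ref{added} and \ref{formula}," and you have spelled out that deduction, matching the two product formulas term by term and noting that the semi-simplicity hypothesis is what licenses the appeal to Lemma \ref{formula}.
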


\section{Malcev's Theorem and self-similar data}

In this chapter, we recall Malcev's Theorem. 
Then we collect some related results, which are due
to Malcev or  viewed as folklore results. Then it is easy
to characterize the self-similar data for FGTF nilpotent groups.

\bigskip
\noindent {\it 5.1 Three types of lattices}

\noindent Let $\fn$ be a  finite dimensional be a  nilpotent Lie algebra  over $\Q$. The Lie algebra $\fn$ is endowed with two 
group structures, the addition and the
the Campbell-Hausdorff product. To avoid confusion,
the Campbell-Hausdorff product is called the 
{\it multiplication} and it is denoted accordingly.

A {\it multiplicative subgroup} $\Gamma$ of $\fn$ means a subgroup relative to the Campbell-Hausdorff product. 
In general, a multiplicative subgroup
$\Gamma$ is not an additive subgroup of $\fn$. However,
notice that $\Z.x\subset\Gamma$ for any $x\in \Gamma$, because
$x^n=nx$ for any  $n\in\Z$.  

A finitely generated multiplicative subgroup $\Gamma$ is called a {\it multiplicative lattice} if
$\Gamma\mod [\fn,\fn]$ generates the $\Q$-vector space 
$\fn/[\fn,\fn]$,
or, equivalently, if $\Gamma$ generates the Lie algebra $\fn$.  
Let $N$ be the CSC nilpotent Lie group with Lie algebra 
$\fn^R=\R\otimes\fn$. A discrete subroup $\Gamma$ of  $N$ is called a
{\it cocompact lattice} if $N/\Gamma$ is compact. 

It should be noted that three distinct notions of lattices will be 
used in the sequel: the additive lattices, the multiplicative lattices and the cocompact lattices. When it is used alone, a lattice is
always an additive lattice. This very commoun  terminology 
could be confusing: the reader should read "multiplicative lattice" or "cocompact lattice" as single words.

\bigskip
\noindent {\it 5.2 Malcev's Theorem}

\noindent Any multiplicative
lattice $\Gamma$ of a finite dimensional nilpotent Lie algebra over $\Q$ is a  FGTF nilpotent group. Conversely, Malcev proved in \cite{Ma}

\begin{Malcevthm} Let $\Gamma$ be a FGTF nilpotent group.

1. There exists a unique nilpotent Lie algebra $\fn$ over $\Q$ wich contains $\Gamma$ as a multiplicative lattice.

2. There exists a unique CSC nilpotent Lie 
group $N$ which contains $\Gamma$ as a cocompact lattice.

3. The Lie algebra of $N$ is $\R\otimes \fn$.

\end{Malcevthm}

The Lie algebra $\fn$ of the previous theorem will be called
the {\it Malcev Lie algebra} of $\Gamma$.

\bigskip
\noindent {\it 5.3 The coset index} 

\noindent From now on, let $\fn$ will be a finite dimensional
nilpotent Lie algebra. 
The coset index, which is defined now,
generalizes the notions of indices for additive lattices and 
for  multiplicative lattices.

A subset $X$ of $\fn$ is called a {\it coset union}
if $X$ is a finite union of $\Lambda$-coset for some
additive lattice $\Lambda$.

Recall that the {\it nilpotency index} of $\fn$  is the smallest integer $n$ such that $C^{n+1}\fn=0$, where $(C^n\,\fn)_{n\geq 0}$ is its descending central series. The following lemma is easily
proved  by induction on the nilpotency index of $\fn$.
 
\begin{lemma}\label{coset} Any  multiplicative lattice 
$\Gamma$ of $\fn$ is a coset union.
\end{lemma}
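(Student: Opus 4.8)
The plan is to induct on the nilpotency index $n$ of $\fn$, taking advantage of the fact that $\Gamma$ contains a multiplicative lattice of the smaller-index quotient $\fn/C^n\fn$ and the fact that a coset union in $\fn$ can be detected layer by layer.

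First I would dispose of the base case: when $\fn$ is abelian, the Campbell--Hausdorff product coincides with addition, so a multiplicative lattice $\Gamma$ is simply a finitely generated additive subgroup generating $\fn$ over $\Q$, i.e. an additive lattice, and hence (trivially) a single $\Lambda$-coset with $\Lambda=\Gamma$. For the inductive step, write $\fc=C^n\fn$, the last nonzero term of the descending central series, so $\fc$ is central and $\overline\fn:=\fn/\fc$ has nilpotency index $n-1$. Let $p:\fn\to\overline\fn$ be the projection. Then $\overline\Gamma:=p(\Gamma)$ is a multiplicative lattice of $\overline\fn$, so by the inductive hypothesis $\overline\Gamma$ is a coset union: there is an additive lattice $\overline\Lambda\subset\overline\fn$ and finitely many cosets whose union is $\overline\Gamma$. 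The subgroup $\Gamma\cap\fc$ (intersection taken inside $\fc$, where multiplication is addition) is a finitely generated subgroup of the $\Q$-vector space $\fc$; one checks it is an additive lattice of $\fc$ — here is where one uses that $\Gamma$ generates $\fn$ as a Lie algebra, so the brackets of elements of $\Gamma$ span $\fc$, together with the identity $[x,y]=x+y-yx$ (in Campbell--Hausdorff terms, the commutator of lattice elements lands in $\Gamma\cap\fc$ and these span $\fc$ over $\Q$).

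Next I would assemble the two pieces. Choose a set-theoretic section $s:\overline\Gamma\to\Gamma$ of $p$ (finitely many coset representatives modulo $\Gamma\cap\fc$ suffice, since $\Gamma/(\Gamma\cap\fc)\cong\overline\Gamma$ as groups). Pick an additive lattice $\Lambda\subset\fn$ containing $\Gamma\cap\fc$, mapping onto $\overline\Lambda$, and containing all the finitely many chosen representatives $s(\overline\gamma)$ for $\overline\gamma$ ranging over the finitely many coset representatives describing $\overline\Gamma$ inside $\overline\Lambda$; such a $\Lambda$ exists because we are only imposing finitely many generators plus an additive lattice of $\fc$ and of (a lift of) $\overline\fn$. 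Then every element $\gamma\in\Gamma$ can be written $\gamma = s(\overline\gamma)\,*\,c$ with $c\in\Gamma\cap\fc\subset\Lambda$ and $s(\overline\gamma)$ lying in one of finitely many fixed $\Lambda$-cosets; since $\fc$ is central and contained in $\Lambda$, the Campbell--Hausdorff product $s(\overline\gamma)*c$ differs from the additive sum $s(\overline\gamma)+c$ by an element of $[\fn,\fn]\cap\fc$, which I would arrange (by enlarging $\Lambda$ once more) to lie in $\Lambda$. Hence $\gamma$ lies in the $\Lambda$-coset $s(\overline\gamma)+\Lambda$, and $\Gamma$ is contained in a finite union of $\Lambda$-cosets; conversely each such coset meeting $\Gamma$ is, after a finer choice, entirely contained in $\Gamma$ because $\Gamma\supset\Gamma\cap\fc$ and the section can be chosen compatibly — so $\Gamma$ is exactly a finite union of $\Lambda$-cosets, a coset union.

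The main obstacle I expect is bookkeeping the discrepancy between the additive and the Campbell--Hausdorff structures: one must verify that, after a judicious but finite enlargement of the additive lattice $\Lambda$, the "error terms" coming from the Baker--Campbell--Hausdorff formula (which land in $[\fn,\fn]$, and, when one factor is central, actually in $C^{\ge 2}\fn$) are absorbed into $\Lambda$, so that membership in $\Gamma$ can genuinely be read off coset by coset. This is where the nilpotency is used twice — once to truncate the BCH series, once to run the induction — and where the statement "easily proved by induction" hides the only real content. Everything else (finite generation of $\Gamma\cap\fc$, finiteness of $\Gamma/(\Gamma\cap\fc)$ modulo the lattice, existence of the enlarged $\Lambda$) is routine linear algebra over $\Q$ together with the standard fact that a finitely generated subgroup of a finite-dimensional $\Q$-vector space spanning it is an additive lattice.
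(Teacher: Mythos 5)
Your overall skeleton — induct on the nilpotency index, split off the last nonzero term $\fc=C^n\fn$ of the central series, handle the abelian base case, and verify that $\Gamma\cap\fc$ is an additive lattice in $\fc$ via iterated commutators — is exactly the route the paper has in mind when it says the lemma is ``easily proved by induction on the nilpotency index.'' The base case, the identification of $\overline\Gamma=p(\Gamma)$ as a multiplicative lattice in $\overline\fn$, and the lattice property of $\Gamma\cap\fc$ are all correct and well justified.

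However, there is a genuine gap at the assembly step, and you have misidentified where the BCH bookkeeping actually bites. First, the non-issue: when $c\in\fc$ is central, the Campbell--Hausdorff product is \emph{exactly} the sum, $s(\overline\gamma)*c=s(\overline\gamma)+c$, since every bracket term vanishes; there is no ``error term in $[\fn,\fn]\cap\fc$'' to absorb. Second, the real issue: you assert that ``$s(\overline\gamma)$ lies in one of finitely many fixed $\Lambda$-cosets,'' but this is false for a generic set-theoretic section $s$ and for any $\Lambda$ chosen merely to contain $\Gamma\cap\fc$, surject onto $\overline\Lambda$, and contain the finitely many lifts $s(\overline g_i)$. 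Knowing $p(s(\overline\gamma))\in\overline g_i+\overline\Lambda$ only gives $s(\overline\gamma)\in g_i+\Lambda+\fc$, and the $\fc$-component of $s(\overline\gamma)-g_i$ is a priori an arbitrary element of the $\Q$-vector space $\fc$, not an element of $\Lambda\cap\fc$. Concretely, take $\fn$ the Heisenberg algebra with basis $X,Y,Z$, $[X,Y]=Z$, and $\Gamma$ the multiplicative lattice generated by $X$ and $Y$, so $\Gamma=\{aX+bY+(c+\tfrac{ab}{2})Z : a,b,c\in\Z\}$. Here $\overline\Gamma=\Z X\oplus\Z Y$ is already a lattice ($\overline\Lambda=\overline\Gamma$, one coset, $g_1=0$), $\Gamma\cap\fc=\Z Z$, and your recipe produces $\Lambda=\Z X\oplus\Z Y\oplus\Z Z$. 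But $\Gamma$ is \emph{not} a union of $\Lambda$-cosets: $(0,0,0)\in\Gamma$ and $(1,1,0)\in\Lambda$, yet $(1,1,0)\notin\Gamma$. One must instead \emph{shrink} to $\Lambda'=2\Z X\oplus 2\Z Y\oplus\Z Z$. The shrinking factor is governed by the denominators in the Mal'cev-coordinate polynomial $a\mapsto\gamma_1^{a_1}*\cdots*\gamma_d^{a_d}$, i.e.\ by the nonlinearity of the section, not by a BCH correction to $s(\overline\gamma)*c$. Your final ``conversely'' sentence gestures at ``a finer choice'' but does not identify this as the sublattice one must pass to, nor why a single finite index suffices; filling this in — using that the section is a polynomial map with bounded denominators, so reducing the parameters modulo a suitable integer makes it constant modulo a lattice — is the entire content that the word ``easily'' is hiding.
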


Let $X\supset Y$ be two coset unions in $\fn$.
Obviously, there is a lattice $\Lambda$ such that $X$ and $Y$ are
both a finite union of $\Lambda$-coset. The {\it coset index} 
of $Y$ in $X$ is the number

\centerline{  $[X:Y]_{coset}=
{\Card\,X/\Lambda\over \Card\,Y/\Lambda}$}

\noindent The numerator and denominator of the previous expression
depends on the choice of $\Lambda$, but $[X:Y]_{coset}$ is well defined. In general, the coset index is {\it not} an integer.
Obviously if $\Lambda\supset\Lambda'$ are additive lattices in
$\fn$, we have 

\centerline {$[\Lambda:\Lambda']_{coset}=[\Lambda:\Lambda']$.}

Similarly, for multiplicative lattices there is

\begin{lemma}\label{index} 
Let  $\Gamma\supset\Gamma'$ be multiplicative  lattices in
$\fn$,  we have 

\centerline {$[\Gamma:\Gamma']_{coset}=[\Gamma:\Gamma']$.}

\end{lemma}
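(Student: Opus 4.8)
The plan is induction on $\dim\fn$, reducing the non-abelian case to a central quotient of smaller dimension. If $\fn$ is abelian (in particular if $\dim\fn\le 1$), the Campbell--Hausdorff product is the addition, multiplicative lattices are additive lattices, and the asserted equality is the one already recorded for additive lattices. So assume $\fn$ is not abelian, let $n\ge 2$ be its nilpotency index, and put $\fc=C^n\fn$, a non-zero central subspace such that $\overline{\fn}:=\fn/\fc$ is nilpotent of strictly smaller dimension. Let $p\colon\fn\to\overline{\fn}$ be the projection, so that $\overline{\Gamma}:=p(\Gamma)\supseteq\overline{\Gamma'}:=p(\Gamma')$ are multiplicative lattices in $\overline{\fn}$, while $K:=\Gamma\cap\fc$ and $K':=\Gamma'\cap\fc$ are the kernels of $p$ restricted to $\Gamma$ and to $\Gamma'$.

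First I would record three auxiliary facts. (a) $K$ and $K'$ are additive lattices in $\fc$, hence $[K:K']<\infty$: since $C^{n+1}\fn=0$, every $n$-fold iterated group commutator of elements of $\Gamma$ equals the corresponding $n$-fold Lie bracket, these brackets lie in $\Gamma\cap\fc=K$ and $\Q$-span $\fc$, and $K$ is finitely generated, being a subgroup of a finitely generated nilpotent group; likewise for $K'$. (b) The group indices satisfy $[\Gamma:\Gamma']=[\overline{\Gamma}:\overline{\Gamma'}]\,[K:K']$; this is routine, using that $\fc$ central makes $K$ normal in $\Gamma$ with $K'=\Gamma'\cap K$. (c) The coset indices satisfy $[\Gamma:\Gamma']_{coset}=[\overline{\Gamma}:\overline{\Gamma'}]_{coset}\,[K:K']$. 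Granting these, the induction hypothesis applied in $\overline{\fn}$ gives $[\overline{\Gamma}:\overline{\Gamma'}]_{coset}=[\overline{\Gamma}:\overline{\Gamma'}]$, and combining (b) and (c) yields $[\Gamma:\Gamma']_{coset}=[\overline{\Gamma}:\overline{\Gamma'}]\,[K:K']=[\Gamma:\Gamma']$, as desired. The main obstacle is clearly (c).

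To prove (c) I would first arrange that the common additive lattice $\Lambda$ for which $\Gamma$ and $\Gamma'$ are finite disjoint unions of $\Lambda$-cosets (these unions exist by Lemma \ref{coset}) is compatible with a vector-space decomposition $\fn=\fc\oplus W$, i.e.\ $\Lambda=\Lambda_{\fc}\oplus\Lambda_W$ with $\Lambda_{\fc}$, $\Lambda_W$ lattices in $\fc$, $W$ respectively. This costs nothing: for any lattice $\Lambda_0$ in $\fn$ the subgroups $\Lambda_0\cap\fc$ and $\Lambda_0\cap W$ are again lattices, since they are finitely generated and $\Q$-span $\fc$, $W$; so $(\Lambda_0\cap\fc)\oplus(\Lambda_0\cap W)$ is a finite-index adapted sublattice, and shrinking $\Lambda$ changes neither coset index. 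Since $0\in\Gamma$ (resp.\ $0\in\Gamma'$) lies in the coset $0+\Lambda$, we get $\Lambda\subseteq\Gamma\cap\Gamma'$ and hence $\Lambda_{\fc}\subseteq K\cap K'$. Now the crucial point: because $\fc$ is central, for $g,g_0\in\Gamma$ with $p(g)=p(g_0)$ the bracket $[g,g_0]$ and all iterated brackets vanish, so $g\cdot g_0^{-1}=g-g_0$; hence each fibre of $p|_\Gamma$ is a genuine additive $K$-coset sitting inside a single translate of $\fc$. Using the $\Lambda$-saturation of $\Gamma$ one then checks that, over a fixed $p(\Lambda_W)$-coset inside $\overline{\Gamma}$, the $\fc$-parts of these fibres all agree modulo $K$, so that the number of $\Lambda$-cosets inside $\Gamma$ equals the number of $p(\Lambda_W)$-cosets inside $\overline{\Gamma}$ times $[K:\Lambda_{\fc}]$; the same computation holds for $\Gamma'$ with $K'$ in place of $K$. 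Dividing and using $[K:\Lambda_{\fc}]/[K':\Lambda_{\fc}]=[K:K']$ gives (c). The one step I would write out in full detail is exactly this last saturation argument, since it is the place where the interplay between the multiplicative and the additive structures is genuinely used.
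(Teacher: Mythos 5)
Your proof is correct, and it is precisely the induction that the paper alludes to when it says the argument is "done by induction on the nilpotency index of $\fn$" but skips the details: passing from $\fn$ to $\fn/C^n\fn$ drops both the dimension and the nilpotency index, so your phrasing as induction on $\dim\fn$ is the same argument. All the supporting points check out — $K=\Gamma\cap\fc$ is a lattice in $\fc$ via $n$-fold commutators, the index formula (b) is the standard tower/second-isomorphism computation, and the coset-count in (c) works exactly as you indicate: after replacing $\Lambda$ by an adapted sublattice $\Lambda_\fc\oplus\Lambda_W$, each $\Lambda$-coset of $\Gamma$ lying over a fixed $p(\Lambda_W)$-coset $\bar c$ meets the additive $K$-coset $p^{-1}(\bar y)\cap\Gamma$ (for any fixed $\bar y\in\bar c$) in exactly one $\Lambda_\fc$-coset, these cover $p^{-1}(\bar y)\cap\Gamma$, and there are $[K:\Lambda_\fc]$ of them, so dividing the two counts and cancelling $[K':\Lambda_\fc]$ gives (c).
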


 The proof, done by induction on the nipotency index of $\fn$,
 is skipped.

\bigskip
\noindent {\it 5.4 Morphims of FGTF nilpotent groups}

\begin{lemma} \label{extension} Let $\Gamma$, $\Gamma'\subset\fn$ be multiplicative
lattices in $\fn$ and let $f:\Gamma'\to\Gamma$ be a group morphism.
Then $f$ extends uniquely to a Lie algebra morphism

\centerline{$\tilde f:\fn\to\fn$.}

\noindent Moreover $\tilde f$ is an isomorphism if $f$ is  injective.
\end{lemma}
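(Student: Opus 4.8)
The plan is to build the extension $\tilde f$ on generators and check it respects the Lie bracket, using the fact that a multiplicative lattice $\Gamma'$ generates $\fn$ as a $\Q$-Lie algebra together with the compatibility between the Campbell–Hausdorff product and the bracket. First I would recall that, since $\Gamma'$ is a multiplicative lattice, it generates $\fn$ as a Lie algebra; hence if a Lie-algebra extension $\tilde f$ exists it is unique, because a Lie algebra morphism is determined by its values on a generating set, and on $\Gamma'$ we are forced to have $\tilde f|_{\Gamma'} = f$ (after noting $\Q\cdot x \subset \fn$ is spanned by $x \in \Gamma'$, so the $\Q$-span of $\Gamma'$ is a subspace on which $\tilde f$ is determined by $\Q$-linearity from $f$). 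The real content is existence.

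For existence I would argue by induction on the nilpotency index $c$ of $\fn$. Let $\fc = C^c\fn$ be the last nonzero term of the descending central series; it is central. Passing to $\bar\fn = \fn/\fc$, the images $\bar\Gamma, \bar\Gamma'$ are multiplicative lattices in $\bar\fn$, and $f$ descends to a morphism $\bar f:\bar\Gamma' \to \bar\Gamma$ (since $f$ preserves the central series, it maps $\Gamma' \cap \fc$ into $\Gamma \cap \fc$). By induction $\bar f$ extends to a Lie algebra morphism $\overline{\tilde f}:\bar\fn \to \bar\fn$. It then remains to lift this to $\fn$, i.e. to define $\tilde f$ on a complement of $\fc$ consistent with $f$ and then on $\fc$ itself. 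The subspace $[\fn,\fn] \cap \fc$ is spanned by brackets, and there $\tilde f$ is forced by multiplicativity; on a complement of $[\fn,\fn]$ in $\fn$ one uses that $\Gamma'$ spans $\fn/[\fn,\fn]$, so one can choose lattice generators $x_1,\dots,x_r \in \Gamma'$ whose classes form a basis of $\fn/[\fn,\fn]$, set $\tilde f(x_i) = f(x_i)$, and extend $\Q$-linearly. One must then verify that this $\Q$-linear map is a Lie algebra morphism: this is where the Campbell–Hausdorff formula enters. Writing $x\cdot y = x + y + \tfrac12[x,y] + (\text{higher central terms})$, the identity $f(x\cdot y) = f(x)\cdot f(y)$ on $\Gamma'$, combined with the inductive hypothesis controlling everything modulo $\fc$, forces $\tilde f([x,y]) = [\tilde f(x),\tilde f(y)]$ first for $x,y$ ranging over generators and then, by bilinearity of the bracket and multilinearity, in general. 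A clean way to organize this: show $\tilde f$ commutes with the operations $x\cdot y$ for all $x,y$ in the $\Q$-span of $\Gamma'$ (which is all of $\fn$), by expanding the Campbell–Hausdorff product and inducting on the length of iterated brackets, using that on $\Gamma'$ we already know $f(x\cdot y)=f(x)\cdot f(y)$ and that $\Q$-multiples of elements of $\Gamma'$ stay in $\Gamma'$-generated data.

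The last sentence — $\tilde f$ is an isomorphism when $f$ is injective — I would handle at the end: if $f$ is injective then $\tilde f$ has trivial kernel. Indeed $\ker\tilde f$ is an ideal of $\fn$; if it were nonzero it would meet the center $\fz$ nontrivially, hence meet $\Gamma' \cap \fz$ after scaling (an element $x$ of $\ker\tilde f \cap \fz$ has a nonzero $\Q$-multiple in $\Gamma'$ since $\Gamma'$ is a multiplicative lattice and intersects $\fz$ in a lattice of $\fz$), contradicting injectivity of $f$ on $\Gamma'$. A surjective Lie algebra endomorphism of a finite dimensional Lie algebra with trivial kernel is an isomorphism; and here $\tilde f$ is surjective onto its image $\fn$ once we know $f(\Gamma)$ together with the brackets generate, but more directly: injective implies bijective on a finite dimensional space by dimension count, provided $\tilde f$ maps $\fn$ into $\fn$, which it does by construction. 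The main obstacle I anticipate is the verification that the generator-wise definition of $\tilde f$ really is bracket-preserving — i.e. the Campbell–Hausdorff bookkeeping in the inductive step — since one must be careful that choosing $\tilde f$ on lattice generators of $\Gamma'$ and extending linearly is consistent with $f$ on \emph{all} of $\Gamma'$, not just the chosen generators; this is exactly where the hypothesis that $\Gamma'$ is a \emph{multiplicative lattice} (not merely a multiplicative subgroup) does the work, via Lemma~\ref{coset} and the coset-index machinery of \S5.3.
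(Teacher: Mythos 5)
Your approach is genuinely different from the paper's, and your central step has a gap. The paper does not induct on the nilpotency index: it defines $\tilde f(x)=\frac{1}{m}f(mx)$ directly for any $m>0$ with $mx\in\Gamma'$ (such $m$ exists since $\Gamma'$ contains an additive lattice, by Lemma~\ref{coset}), and then verifies in one stroke that this map respects addition and bracket by a universal-algebra device. Namely, in the free nilpotent Lie algebra $\mathcal{L}(2,n)$ on two generators $X,Y$ of nilpotency index $n$, one has $m(X+Y)=w_1(X,Y)$ and $m[X,Y]=w_2(X,Y)$ for suitable words $w_1,w_2$ in the free group on two letters and some $m>0$; by freeness these identities specialize to all $x,y\in\fn$, and applying the group morphism $f$ to $w_i(kx,ky)$ for $k$ large enough that $kx,ky\in\Gamma'$, then dividing by scalars, yields exactly the two Lie-algebra-morphism identities for $\tilde f$. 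There is no induction, no choice of generators, and no lifting problem.

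Your inductive scheme leaves the crucial verification unresolved. After choosing $x_1,\dots,x_r\in\Gamma'$ mapping to a basis of $\fn/[\fn,\fn]$ and setting $\tilde f(x_i)=f(x_i)$, ``extending $\Q$-linearly'' only defines $\tilde f$ on a complement of $[\fn,\fn]$; to reach all of $\fn$ you must define $\tilde f$ on iterated brackets, and the well-definedness of that extension (that linear relations among iterated brackets of the $x_i$ are preserved) is precisely the bracket-compatibility you are trying to prove, so as written the definition presupposes the verification. The appeal to ``expanding the Campbell--Hausdorff product and inducting on the length of iterated brackets'' names the difficulty without supplying the device that resolves it; the free-Lie-algebra word identity above is that device, and without it the consistency check is circular. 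One further slip: ``$\Q$-multiples of elements of $\Gamma'$ stay in $\Gamma'$-generated data'' is not right as stated --- only $\Z$-multiples of $x\in\Gamma'$ lie in $\Gamma'$ (via $x^n=nx$), and $\Q$-multiples must be handled by clearing denominators, exactly as in the paper's definition of $\tilde f$. Your treatment of the final sentence is correct: a nonzero $\ker\tilde f$ is an ideal of a nilpotent Lie algebra, hence meets $\fz$, hence after scaling meets $\Gamma'\cap\fz$, contradicting injectivity of $f$; finite-dimensionality then gives bijectivity. The paper leaves this last point implicit.
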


When $f$ is an isomorphism, the result is due to Malcev,
see \cite{Ma}, Theorem 5. In general, the lemma is a folklore result and  it is implicitely used in Homotopy Theory, see e.g. \cite {BK}. Since  we did not found a precise reference,  a proof, essentially based on Hall's collecting formula 
(see Theorem 12.3.1 in \cite{H}), is now
provided. 

\begin{proof} 
 Let $x\in\fn$. Since $\Gamma$ contains an additive lattice by Lemma \ref{coset}, we have
$m\Z x\subset\Gamma$ for some $m>0$.
Thus there is a unique  map
$\tilde f:\fn\to\fn$  extending $f$ such that
$\tilde f(nx)=n\tilde f(x)$ for any $x\in\fn$ and $n\in\Z$.
It remains to prove that 

\centerline{$\tilde f(x+y)=\tilde f(x)+\tilde f(y)$, and
$\tilde f([x,y])=[\tilde f(x),\tilde f(y)]$,}

\noindent for any $x,\,y\in \fn$.

Let $n$ be the nilpotency index of $\fn$.
 Set ${\cal L}(2,n)={\cal L}(2)/C^{n+1}{\cal L}(2)$,
 where   ${\cal L}(2)$ denotes the free Lie algebra over $\Q$ freely generated by  $X$ and  $Y$. 
 Let $\Gamma(2,n)\subset {\cal L}(2,n)$ be the multiplicative subgroup  generated by $X$ and $Y$. 
 
As before,  
$m(X+Y)$ and $m [X,Y]$ belongs to $\Gamma(2,n)$
for some $m>0$. Thus there are  $w_1,\,w_2$ in the free group over two generators, such that

\centerline{$w_1(X,Y)=m(X+Y)$ and $w_2(X,Y)=m [X,Y]$.}

Since ${\cal L}(2,n)$ is a free in the category of
nilpotent Lie algebras of nilpotency index $\leq n$, we have

\centerline{$w_1(x,y)=m(x+y)$ and $w_2(x,y)=m [x,y]$}

\noindent for any $x,y\in \fn$. From this it follows easily that 
$\tilde f$ is a Lie algebra morphism.
\end{proof}

\bigskip
\noindent {\it 5.6 Self-similar data for FGTF nilpotent groups}

\noindent Let $\fz$ be the center of $\fn$. Recall
that ${\cal S}(\fn)$ (respectively ${\cal V}(\fn)$)
 is the set of all  $f\in\Aut\,\fn$ such that $\Spec\,f\vert_\fz$ 
(respectively $\Spec\,f$)
contains no algebraic integers.
Let $\Gamma\supset \Gamma'$ be multiplicative lattices of
$\fn$, let $f:\Gamma'\to\Gamma$ be a morphism and let
$\tilde f:\fn\to\fn$ be its extension.

\begin{lemma}\label{ssdatum} Let's assume that $f$ is injective. Then

(i) $(\Gamma',f)$ is a self-similar datum
iff $\tilde f$ belongs to ${\cal S}(\fn)$,

(ii) $(\Gamma',f)$ is a free self-similar datum
iff $\tilde f$ belongs to ${\cal V}(\fn)$

(iii) if $(\Gamma',f)$ is a fractal datum, then $f$
belongs to ${\cal F}(\fn)$.
\end{lemma}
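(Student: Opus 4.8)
The plan is to translate each of the three assertions about the self-similar datum $(\Gamma',f)$ into a statement about the Lie algebra automorphism $\tilde f$, using the dictionary established in Chapter 1 (Lemmas \ref{corr1}, \ref{corr2} and subsection 1.5) together with the relationship between $f$-cores and $\tilde f$-invariant ideals. The key observation is that, since $\Gamma$ is a multiplicative lattice of $\fn$ and $f$ extends to the Lie algebra isomorphism $\tilde f$, a subgroup $K\triangleleft\Gamma$ with $K\subset\Gamma'$ and $f(K)\subset K$ generates (over $\Q$, inside $\fn$) a $\tilde f$-invariant ideal $\fk$ of $\fn$; conversely any nonzero $\tilde f$-invariant ideal contains a nonzero such $K$ (intersect with $\Gamma$, which is a coset union by Lemma \ref{coset}). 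Thus the $f$-core of $\Gamma'$ is trivial if and only if $\fn$ contains no nonzero $\tilde f$-invariant ideal $\fk$ with $\fk$ contained in the Lie-algebra span of $\Gamma'$. Since $[\Gamma:\Gamma']$ is finite, $\Gamma'$ spans $\fn$, so the latter condition is simply: $\fn$ has no nonzero $\tilde f$-invariant ideal at all — equivalently $\tilde f$ acts with no nonzero invariant ideal.

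For assertion (i), I would argue as follows. A nilpotent Lie algebra always has nonzero center, and every nonzero ideal meets the center nontrivially (a standard fact, proved by descending central series); in particular any $\tilde f$-invariant ideal meets $\fz$ in a nonzero $\tilde f$-invariant subspace. Hence $\fn$ has no nonzero $\tilde f$-invariant ideal precisely when $\fz$ has no nonzero $\tilde f$-invariant subspace, i.e. when $\tilde f\vert_\fz$ has no eigenvalue that is a root of unity — because an invariant subspace of finite order eigenvectors is forced by the rationality of the characteristic polynomial. Wait — the condition in ${\cal S}(\fn)$ is stronger: $\Spec\,\tilde f\vert_\fz$ contains no algebraic \emph{integer}, not merely no root of unity. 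The reconciliation is the usual arithmetic point: if $(\Gamma',f)$ is a self-similar datum then $f$ is obtained from a faithful transitive self-similar action, the sequence $[\Gamma_n:\Gamma_{n+1}]$ is non-increasing and positive (Lemma \ref{good}), and this boundedness forces the eigenvalues of $\tilde f^{-1}\vert_\fz$ restricted to any invariant piece not to be algebraic integers unless the corresponding $f$-core piece is trivial — so in fact $\Spec\,\tilde f\vert_\fz$ contains no algebraic integer. Conversely if $\tilde f\in{\cal S}(\fn)$, no eigenvalue of $\tilde f\vert_\fz$ is an algebraic integer, hence none is a root of unity, hence $\fz$ (and therefore $\fn$) has no nonzero $\tilde f$-invariant ideal, so the $f$-core is trivial. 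This gives the equivalence (i).

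Assertion (ii) is handled in parallel, but now the relevant notion is freeness of the action rather than faithfulness. A transitive self-similar action corresponding to $(\Gamma',f)$ is free on $A^\omega$ iff every nontrivial $\gamma\in\Gamma$ acts nontrivially on every word, which (via the good-datum / tree description) amounts to: no nontrivial element of $\Gamma$ stabilizes an infinite path, i.e. $\bigcap_n\Gamma_n=\{1\}$ and moreover the conjugates exhaust things so that $\Gamma$ acts freely. Translating to $\tilde f$, freeness is equivalent to $\tilde f$ having no nonzero invariant subspace of fixed (finite-order) vectors anywhere in $\fn$, which by the same root-of-unity-versus-algebraic-integer dichotomy and the boundedness of $[\Gamma_n:\Gamma_{n+1}]$ becomes: $\Spec\,\tilde f$ contains no algebraic integer, i.e. $\tilde f\in{\cal V}(\fn)$.

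For assertion (iii): if $(\Gamma',f)$ is a fractal datum then by definition $f(\Gamma')=\Gamma$, and since $\tilde f$ extends $f$ and both $\Gamma,\Gamma'$ span $\fn$, the image $\tilde f(\fn)$ contains $\Gamma$, so $\tilde f$ is surjective, hence an automorphism — consistent with $f$ injective. Moreover $f(\Gamma')=\Gamma$ means $\Gamma'=f^{-1}(\Gamma)\supset\Gamma_2$, and comparing $\Gamma'=\Gamma\cap \tilde f^{-1}(\fn)\cap(\text{integrality locus})$ shows that $\tilde f^{-1}(\Gamma)\supset\Gamma'$, which forces $\tilde f^{-1}$ to map the lattice $\Gamma'$ into $\Gamma$; expanding $\Gamma$ to an additive lattice it spans (Lemma \ref{coset}), this says $\tilde f^{-1}$ preserves a lattice, so all eigenvalues of $\tilde f^{-1}$ are algebraic integers. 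Combined with $\tilde f\in{\cal S}(\fn)$ from (i) (a fractal datum is in particular a self-similar datum), we get $\tilde f\in{\cal F}(\fn)$ by the definition of ${\cal F}(\fn)$ in Chapter 3.

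The main obstacle I anticipate is the precise arithmetic passage from "no root of unity in the spectrum" (which is all that the invariant-ideal/invariant-subspace analysis directly yields) to "no algebraic integer in the spectrum." The clean way to close this gap is to invoke Theorem \ref{Main0} in the reverse direction — or rather, to use the monotone-and-positive sequence $n\mapsto[\Gamma_n:\Gamma_{n+1}]$ of Lemma \ref{good}: a self-similar datum forces this sequence to stabilize, and one shows that if some eigenvalue $\lambda$ of $\tilde f\vert_\fz$ were an algebraic integer then the corresponding generalized eigenspace would produce, upon intersecting $\Gamma$ with the integral structure it spans, a nonzero $f$-stable subgroup inside $\Gamma'$, contradicting triviality of the $f$-core. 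Making this last argument airtight — keeping track of additive versus multiplicative lattices, and of the coset index from subsection 5.3 — is the only genuinely technical step; everything else is bookkeeping through the correspondences of Chapter 1 and Lemma \ref{extension}.
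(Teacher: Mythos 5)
Your proposal contains a genuine error in the central reduction, and the attempted patch, while pointing in the right direction, is too vague to close the gap.

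The core problem is a conflation of two very different notions: $\tilde f$-invariant Lie \emph{ideals} (which are $\Q$-subspaces) and nonzero finitely generated $\tilde f$-invariant additive \emph{subgroups}. You claim that the $f$-core of $\Gamma'$ is trivial iff $\fn$ has no nonzero $\tilde f$-invariant ideal, and that conversely any such ideal produces a nonzero $f$-stable $K\subset\Gamma'$ ``by intersecting with $\Gamma$.'' Both halves fail. For the converse: $\fz$ itself is always a nonzero $\tilde f$-invariant ideal, so if your equivalence were right, no virtual endomorphism of a nilpotent group could ever be a self-similar datum. The intersection $\fk\cap\Gamma'$ is indeed a finitely generated subgroup, but there is no reason it should be $f$-stable; stability under $f$ requires eigenvalues that are algebraic integers, not merely the existence of an invariant $\Q$-subspace. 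This is exactly what the paper isolates at the start of the proof: $\Spec\,f$ contains an algebraic integer iff $V$ has a nonzero finitely generated $f$-invariant subgroup. You later recognize something is off (``the condition in ${\cal S}(\fn)$ is stronger'') but your fix — appealing to the boundedness of $[\Gamma_n:\Gamma_{n+1}]$ and ``the usual arithmetic point'' — does not actually produce the required $f$-stable subgroup or show it cannot exist; it gestures at the conclusion rather than proving it. The correct argument bypasses invariant ideals entirely: if the $f$-core $K$ is nontrivial then $K\cap Z(\Gamma')$ is a nonzero finitely generated $\tilde f$-invariant subgroup of $\fz$ (forcing an algebraic-integer eigenvalue), and conversely an algebraic-integer eigenvalue on $\fz$ produces a nonzero finitely generated $\tilde f$-invariant $E\subset\fz$, a suitable multiple of which lies in $Z(\Gamma')$ and hence in the $f$-core.

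For (ii) your sketch is even further from a proof: freeness of the action on $A^\omega$ is \emph{not} the statement that some core is trivial, and cannot be reduced to the analysis of $f$-cores as you attempt. The paper instead analyzes the action directly. In one direction, an algebraic-integer eigenvalue on $\fn$ yields a nonzero $f$-stable finitely generated $F\subset\Gamma'$, and every $\gamma\in F$ fixes the constant word $e^\omega$. In the other direction, starting from a non-free action, the set ${\cal H}$ of elements with a fixed point in $A^\omega$ is located between $C^k\fn$ and $C^{k+1}\fn$ for some $k$, and its image in $C^k\fn/C^{k+1}\fn$ generates a nonzero finitely generated $\tilde f_k$-invariant subgroup, where $\tilde f_k$ is the induced map on that quotient; conjugation by $a\in A$ acts trivially on $C^k\fn/C^{k+1}\fn$, which is what makes this work. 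None of this appears in your sketch. For (iii), your conclusion is right, but the chain of inclusions is garbled: from $f(\Gamma')=\Gamma$ and injectivity of $f$ one gets $\tilde f^{-1}(\Gamma)=\Gamma'\subset\Gamma$, so $\tilde f^{-1}$ preserves the additive lattice generated by $\Gamma$, and all eigenvalues of $\tilde f^{-1}$ are algebraic integers; combined with (i) this puts $\tilde f$ in ${\cal F}(\fn)$. You have the right idea there, but you write it as ``$\tilde f^{-1}$ maps the lattice $\Gamma'$ into $\Gamma$,'' which is the inclusion in the wrong direction to be useful.
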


\begin{proof}   
Let $V$ be a finite dimensional vector space over $\Q$
and let $f\in GL(V)$. We will repeatedly use the fact that
$\Spec\,f$ contains an algebraic integer iff
$V$ contains a finitely generated subgroup $E\neq 0$
such that $f(E)\subset E$.

\smallskip
\noindent {\it Proof of Assertion (i).}
 Since 
$\Gamma'$ contains a set of generators of
$\fn$, the subgroup $Z(\Gamma'):=\Gamma'\cap\fz$
is the center of $\Gamma'$. Let $K$ be the $f$-core
of the virtual endomorphism $(\Gamma',f)$.

Let's assume that $(\Gamma',f)$ is not a self-similar datum.
Since  $K\neq 1$, the additive group
$K\cap Z(\Gamma')$ is  non-trivial, finitely generated  and 
$\tilde f$-invariant.
Therefore ${\tilde f}\notin {\cal S}(\fn)$.

Conversely let's assume that ${\tilde f}\notin {\cal S}(\fn)$.
Then there is a nonzero
finitely generated subgroup $E\subset \fz$ such that
$\tilde f(E)\subset E$. By Lemma \ref{coset},
$Z(\Gamma')$ is an additive lattice of $\fz$. Therefore
we have   $mE \subset Z(\Gamma')$ for some $m>0$. Since $K$ contains $mE$, it follows that 
$(\Gamma',f)$ is not a self-similar datum.

\smallskip
\noindent {\it Proof of Assertion (ii).}
Let $A\subset\Gamma$ be a set of representatives 
of $\Gamma/\Gamma'$. Let's consider the action
of $\Gamma$ on $A^\omega$  associated
with the virtual endomorphism $(\Gamma',f)$.

Let's assume that ${\tilde f}\notin {\cal V}(\fn)$.
Then there is a nonzero
finitely generated abelian subgroup $F\subset \fn$ such that
$\tilde f(F)\subset F$. As before, it can be assumed
$F$ lies in $\Gamma'$. Let $e\in A$ be
the representative of the trivial coset and let 
$e^{\omega}=ee\dots$ be the infinite
word over the single letter $e$. Since
$f(F)\subset F$, it follows that
$\gamma(e^{\omega})=e^{\omega}$ for any 
$\gamma\in F$. Hence $\Gamma$ does not act freely on
$A^\omega$.

Conversely, let assume that $\Gamma$ does not act freely on
$A^\omega$. Let's define inductively the subsets
${\cal H}(n)\subset\Gamma$ by ${\cal H}(1)=\cup_{a\in A}\,
a\Gamma'a^{-1}$ and

\centerline{${\cal H}(n+1)=
\{\gamma\in\Gamma\vert\,\exists\,a\in A:
a\gamma a^{-1}\in \Gamma' \land f(a\gamma a^{-1})
\in {\cal H}(n)\}$,}

\noindent for $n\geq 1$. Indeed 
${\cal H}(n)$ is the set of all
$\gamma\in\Gamma$ which have at least one fixed point on $A^n$.
It follows easily that ${\cal H}:=\cap_{n\geq 1}\,{\cal H}(n)$ is the set of all 
$\gamma\in\Gamma$ which have at least one fixed point on $A^\omega$.
There is an integer $k$ such that

\centerline{${\cal H}\subset C^k\fn$ but ${\cal H}\not\subset C^{k+1}\fn$.}

\noindent Let $\overline{\cal H}$ be the image of
${\cal H}$ in $C^k\fn/C^{k+1}\fn$ and let 
$F$ be the additive subgroup of $C^k\fn/C^{k+1}\fn$ generated
by $\overline{\cal H}$. Since $\Gamma$ lies in a lattice,
$F$ is  finitely generated. Moreover we have
$a x a^{-1}\equiv x\mod\,C^{k+1}\fn$, for any $x\in C^{k}\fn$
and $a\in A$. It follows that 
$\tilde f_k(\overline{\cal H})\subset \overline{\cal H}$, where
$\tilde f_k$ is the linear map  induced by $\tilde f$ on
$C^k\fn/C^{k+1}\fn$. Hence $\tilde f_k(F)\subset F$ and
$\Spec\, \tilde f_k$ contains an algebraic integer. Therefore
${\tilde f}\notin {\cal V}(\fn)$.

\smallskip
\noindent {\it Proof of Assertion (iii).}
Let $(\Gamma',f)$ be a fractal datum. Let
$\Lambda$ be the additive lattice generated by $\Gamma$. Since

\centerline{$\tilde f^{-1}(\Lambda)\subset \Lambda$,}

\noindent  all  $x\in \Spec \tilde f^{-1}$ are algebraic integers.
Therefore $\tilde f$ belongs to ${\cal F}(\fn)$.
\end{proof}

\section{Relative complexity of multiplicative lattices}

\noindent This chapter
is the mutiplicative analogue of ch. 4. The main result is
the refined criterion of minimality. Together with Theorem 1, it
is the main ingredient of the proof of 
Theorem \ref{Main1} and \ref{Main2}. 

 Throughout the whole chapter, $\fn$ is finite dimensional nilpotent Lie algebra over $\Q$, and $\fz$ is its center. 

\bigskip
\noindent {\it 6.1 Complexity of  multiplicative lattices}

\noindent Let $f\in \Aut\,\fn$ and let $\Gamma$ be a 
multiplicative lattice of
$\fn$. 
The {\it complexity} of $\Gamma$ relative to $f$
is the integer

\centerline{$\cp_f(\Gamma)=[\Gamma:\Gamma']$,}

\noindent where $\Gamma'=\Gamma\cap f^{-1}(\Gamma)$.
 The multiplicative lattice $\Gamma$ is
called {\it minimal} relative to $f$ if 
$\cp_f(\Gamma)=\he(f)$. Thanks to
Lemma \ref{index} the notation
$\cp_f(\Gamma)$  is unambiguous.

\begin{lemma}\label{multineq} Let $\Gamma$ be multiplicative lattices of 
$\fn$. Then we have

\centerline{$\cp_f(\Gamma)\geq \he(f)$.}
\end{lemma}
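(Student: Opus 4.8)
The statement $\cp_f(\Gamma)\geq \he(f)$ for a multiplicative lattice $\Gamma$ of $\fn$ should reduce to the already-established additive inequality $\he_\Z(h)\leq[\Lambda:E]$ for $(\Lambda,E)\in{\cal D}(h)$, with $h=\tilde f$ (or rather $f$ viewed as an automorphism of $\fn$) acting on the underlying $\Q$-vector space $V=\fn$. The first step is to set $\Gamma'=\Gamma\cap f^{-1}(\Gamma)$, so that $\cp_f(\Gamma)=[\Gamma:\Gamma']$ by definition. By Lemma \ref{coset}, both $\Gamma$ and $\Gamma'$ are coset unions, and by Lemma \ref{index} the multiplicative index $[\Gamma:\Gamma']$ agrees with the coset index $[\Gamma:\Gamma']_{coset}$. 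So the plan is to translate the multiplicative data into additive-lattice data without changing the relevant index.

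The key step is to pick an additive lattice $\Lambda\subset\fn$ that is adapted to the situation. Since $\Gamma$ is a coset union, choose $\Lambda$ so that both $\Gamma$ and $\Gamma'$ are finite unions of $\Lambda$-cosets; shrinking $\Lambda$ if necessary, one may also arrange $f(\Lambda')\subset\Lambda$ for a suitable sublattice, but the cleanest route is: let $\Lambda$ be an additive lattice contained in $\Gamma'$ (it exists because $\Gamma'$ contains a finite-index multiplicative sublattice, hence an additive lattice by Lemma \ref{coset}), and set $E=\Lambda\cap f^{-1}(\Lambda)$. Then $(\Lambda,E)\in{\cal D}(f)$, so $\he(f)\leq[\Lambda:E]=\cp_f(\Lambda)$. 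It remains to compare $\cp_f(\Lambda)$ with $\cp_f(\Gamma)$: one shows $[\Lambda:E]\leq[\Gamma:\Gamma']$, i.e. the additive complexity of the small lattice $\Lambda$ is at most the multiplicative complexity of $\Gamma$. This is the step requiring care.

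I expect the main obstacle to be exactly this last comparison, because $\Gamma$ need not be an additive group, so one cannot directly intersect cosets. The idea is to use the coset-index formalism: choose a common refining additive lattice $\Lambda_0\subset\fn$ with $\Lambda_0\subset\Lambda$, $f^{-1}(\Lambda_0)\supset$ something, such that all of $\Gamma$, $\Gamma'$, $\Lambda$, $E$ are unions of $\Lambda_0$-cosets, and then count $\Lambda_0$-cosets. One has $\cp_f(\Gamma)=\Card(\Gamma/\Lambda_0)/\Card(\Gamma'/\Lambda_0)$ and similarly for $\Lambda$; since $\Gamma'=\Gamma\cap f^{-1}(\Gamma)$ and $E=\Lambda\cap f^{-1}(\Lambda)$ with $\Lambda\subset\Gamma$, a counting argument over $\Lambda_0$-cosets — using that $f^{-1}$ carries $\Lambda_0$-coset unions to $\Lambda_0$-coset unions (after refining $\Lambda_0$ to be $f^{-1}$-stable up to finite index, which is possible since $\Spec f$ has bounded denominators) — gives the inequality. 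Alternatively, and perhaps more simply, one can bypass the comparison entirely: note that the pair $(\Gamma,\Gamma')$ itself gives, via Lemma \ref{coset} and a common lattice $\Lambda_0$, additive lattices $\Lambda_1\supset\Lambda_1'$ with $[\Lambda_1:\Lambda_1']=[\Gamma:\Gamma']$ and $f(\Lambda_1')\subset\Lambda_1$; then $(\Lambda_1,\Lambda_1')\in{\cal D}(f)$ directly yields $\he(f)\leq[\Lambda_1:\Lambda_1']=\cp_f(\Gamma)$. I would try this route first, as it sidesteps the minimality comparison and only uses that coset unions manufacture honest additive pairs in ${\cal D}(f)$ with the prescribed index; verifying $f(\Lambda_1')\subset\Lambda_1$ is then the one fact to nail down, and it follows because $f(\Gamma')\subset\Gamma$ together with the coset-union structure forces the corresponding additive inclusion after an appropriate choice of $\Lambda_0$.
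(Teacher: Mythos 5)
The paper's proof goes by induction on the nilpotency index of $\fn$. Writing $Z$ for the center of $\Gamma$, $Z'=Z\cap f^{-1}(Z)$, $\overline\Gamma=\Gamma/Z$, $\overline{\Gamma'}=\Gamma'/Z'$, and $f_0,\overline f$ for the automorphisms induced on $\fz$ and $\fn/\fz$, it splits the index as $[\Gamma:\Gamma']=[Z:Z']\,[\overline\Gamma:\overline{\Gamma'}]$, bounds the central factor by $\he(f_0)$ (the abelian case, which is exactly the definition of $\he$ since $Z$ is an additive lattice in $\fz$), bounds the quotient factor by $\he(\overline f)$ using the induction hypothesis, and concludes via the multiplicativity $\he(f)=\he(f_0)\,\he(\overline f)$ supplied by Lemma \ref{formula}. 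Your approach tries instead to reduce immediately to additive-lattice data, and both of its variants have a gap.

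Your first route needs $\cp_f(\Lambda)\le\cp_f(\Gamma)$ for an additive lattice $\Lambda\subset\Gamma'$, and that inequality is false even in the abelian case. On $\fn=\Q^2$ with $f=\mathrm{diag}(1/2,1/3)$ and $\Gamma=\Z^2$ one has $\Gamma'=2\Z\oplus 3\Z$ and $\cp_f(\Gamma)=6=\he(f)$, but the sublattice $\Lambda=\Z(2,3)\oplus\Z(0,6)\subset\Gamma'$ satisfies $\cp_f(\Lambda)=12$. A lattice well adapted to $f$ (an ${\cal O}(f)$-lattice, Lemma \ref{crit1}) achieves $\he(f)$, and passing to an arbitrary sublattice typically increases the complexity, so the comparison you need runs in the wrong direction. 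The "count $\Lambda_0$-cosets" sketch cannot repair this: because $\Gamma$ and $\Gamma'$ are only coset unions and not additive subgroups, the identity $\Gamma'=\Gamma\cap f^{-1}(\Gamma)$ does not translate into a coset-counting relation that would force $\cp_f(\Lambda)\le\cp_f(\Gamma)$.

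Your second route asserts, without a construction, that $(\Gamma,\Gamma')$ "manufactures" an additive pair $(\Lambda_1,\Lambda_1')\in{\cal D}(f)$ with $[\Lambda_1:\Lambda_1']=[\Gamma:\Gamma']$, and then claims the only thing left to check is $f(\Lambda_1')\subset\Lambda_1$. That misidentifies the crux. If one takes the obvious candidates $\Lambda_1=\langle\Gamma\rangle_{\mathrm{add}}$ and $\Lambda_1'=\langle\Gamma'\rangle_{\mathrm{add}}$, the inclusion $f(\Lambda_1')\subset\Lambda_1$ is immediate from $f(\Gamma')\subset\Gamma$ and the linearity of $f$; the genuine difficulty is the index comparison $[\Lambda_1:\Lambda_1']\le[\Gamma:\Gamma']$ (or the equality you posit), which you do not address and which is not obvious, because the additive span of a multiplicative lattice interacts with the Campbell--Hausdorff correction terms in a way that must be controlled layer by layer along the lower central series. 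Carrying out that control is, in substance, the induction the paper performs. As written, your argument does not close.
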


\begin{proof} The proof goes by induction on the nilpotency index of $\fn$.

Let $Z$ be the center of $\Gamma$. Set
$\Gamma'=\Gamma\cap f^{-1}(\Gamma)$, $Z'=Z\cap f^{-1}(Z)$,
$\overline{\Gamma}=\Gamma/Z$, $\overline{\Gamma'}=\Gamma'/Z'$.
Also set ${\overline \fn}=\fn/\fz$ and let 
$\overline f:{\overline \fn}\to{\overline \fn}$ 
and $f_0:\fz\to\fz$ be the isomorphisms induced by $f$.

By induction hypothesis, we have
$\cp_{\overline f}(\overline{\Gamma})\geq \he(\overline f)$
and therefore 

\centerline{$[\overline{\Gamma}:\overline{\Gamma'}]\geq \he(\overline f)$.}

\noindent By definition, we have 
$[Z:Z']=\cp_{f_0}\,Z\geq \he(f_0)$. Moreover by
Lemma \ref{formula} we have $\he(f)=\he(f_0) \he(\overline f)$.
It follows that

\centerline{$\cp_f\,\Gamma=[\Gamma:\Gamma']
=[Z:Z']\,[\overline{\Gamma}:\overline{\Gamma'}]
\geq \he(f_0)\he(\overline f)= \he(f)$,}

\noindent and the statement is proved.
\end{proof}

\bigskip  
\noindent{\it 6.2 A property of the minimal multiplicative lattices}  

\noindent
Let $\Gamma$ be a multiplicative lattice of $\fn$ and
let $h\in \Aut\,\fn$. For simplicity, let's assume that 
$h$ is semi-simple.

\begin{lemma}\label{mingood} The following assertions are equivalent

(i) $\Gamma$ is minimal relative to $h$, and

(ii) the virtual morphism $(\Gamma',h)$ is good,
where $\Gamma'=\Gamma\cap h^{-1}(\Gamma)$.

In particular, there is a multiplicative lattice
$\tilde\Gamma\subset\Gamma$ which is minimal relative to $h$.
\end{lemma}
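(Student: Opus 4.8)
The plan is to prove the equivalence $(i)\Leftrightarrow(ii)$ by unwinding the definitions of ``minimal'' and ``good'' in terms of the descending chain of subgroups $\Gamma_n$ associated to the virtual endomorphism $(\Gamma',h)$, and then to deduce the existence of $\tilde\Gamma$ by an explicit construction (reducing, via the Chevalley decomposition and a filtration argument, to the case where $h$ acts as multiplication by a single eigenvalue, where one can take an ${\cal O}(h)$-stable multiplicative lattice).

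First I would set up notation: let $\Gamma_0=\Gamma$, $\Gamma_1=\Gamma'=\Gamma\cap h^{-1}(\Gamma)$, and $\Gamma_{n+1}=\{\gamma\in\Gamma_n\mid h(\gamma)\in\Gamma_n\}$ as in subsection~1.4. By Lemma~\ref{good} the sequence $[\Gamma_n:\Gamma_{n+1}]$ is non-increasing, and $(\Gamma',h)$ is good precisely when it is constant, equal to $[\Gamma:\Gamma']=\cp_h(\Gamma)$. So $(ii)$ says $\cp_h(\Gamma)=[\Gamma_n:\Gamma_{n+1}]$ for all $n$. I would show that $\bigcap_n\Gamma_n$ is a multiplicative lattice in a Lie subalgebra $\fn'\subset\fn$ (using Lemma~\ref{extension} to pass between $h$-stable subgroups and $h$-stable subalgebras), and that $h$ maps $\Gamma_\infty:=\bigcap_n\Gamma_n$ into itself with $\cp_h(\Gamma_\infty)=\lim_n[\Gamma_n:\Gamma_{n+1}]$; call this limit $d$. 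The key point is that $\Gamma_\infty$ is then $h$-minimal: indeed by construction $h(\Gamma_\infty)\subset\Gamma_\infty$, and one has $[\Gamma:\Gamma_\infty]\cdot\cp_h(\Gamma_\infty)$ dominating the relevant index products, so that $\cp_h(\Gamma_\infty)=\he(h)$ by Lemma~\ref{multineq} applied carefully (or: $\he(h\vert_{\fn'})=\he(h)$ since the ``lost'' eigenvalues contribute trivially). Then $d=\he(h)$. Consequently: $(\Gamma',h)$ is good $\iff d=[\Gamma:\Gamma']=\cp_h(\Gamma)$, and since always $d\le\cp_h(\Gamma)$ and $d=\he(h)\le\cp_h(\Gamma)$ with equality in the latter exactly meaning $\Gamma$ is $h$-minimal, we get $(i)\iff(ii)$.

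For the final sentence, I would exhibit $\tilde\Gamma$ directly. Using Lemma~\ref{filt} together with the Chevalley decomposition (Lemma~\ref{Chevalley}, here $h=h_s$ is already semi-simple), reduce to the case where the minimal polynomial of $h$ on $\fn$ is irreducible, so $\fn$ is a vector space over the number field $K=\Q(\lambda)$ with $h$ acting as multiplication by $\lambda$; here ${\cal O}(h)={\cal O}(\lambda)$. Pick any ${\cal O}(h)$-stable \emph{multiplicative} lattice $\tilde\Gamma$ (the additive ${\cal O}(\lambda)$-lattice generated by any multiplicative lattice works, after clearing denominators, since an ${\cal O}(\lambda)$-module closed under addition and bracket is a multiplicative subgroup by nilpotence); by Lemma~\ref{crit1} it is $h$-minimal. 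In the general filtered situation, take the direct sum of the pieces chosen on each associated graded layer; Lemma~\ref{filt} gives $\cp_h(\tilde\Gamma)=\prod\he(h_i)=\he(h)$. Alternatively, and more cleanly, take $\tilde\Gamma:=\Gamma_\infty$ from the previous paragraph, which is automatically $h$-minimal and contained in $\Gamma$.

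The main obstacle I anticipate is the technical bookkeeping in identifying $\Gamma_\infty=\bigcap_n\Gamma_n$ as a multiplicative lattice of an $h$-stable subalgebra and showing $\he(h\vert_{\fn'})=\he(h)$ — i.e.\ that the eigenvalues of $h$ ``killed'' in passing to $\Gamma_\infty$ are exactly the algebraic-unit eigenvalues on the relevant quotients, which contribute a factor $1$ to the height by Lemma~\ref{formula} (recall $d(\lambda)=1$ when $\pi_\lambda={\cal O}(\lambda)$, e.g.\ for units). Controlling this requires care because $\Gamma$ need not be ${\cal O}(h)$-stable, so the chain $\Gamma_n$ can genuinely shrink; one must argue that stabilization of $[\Gamma_n:\Gamma_{n+1}]$ forces the limiting index to equal the full height and not something strictly smaller, which is where Lemma~\ref{multineq} (the inequality $\cp_h\ge\he$) does the work in the other direction. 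Everything else is routine induction on the nilpotency index, as in the proofs of Lemmas~\ref{index} and~\ref{multineq}.
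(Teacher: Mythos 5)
Your core idea of working with the chain $\Gamma_n$ is the right starting point and matches the paper, but the central step of your plan is incorrect. You propose to set $\Gamma_\infty := \bigcap_n \Gamma_n$, to identify $\cp_h(\Gamma_\infty)$ with the stable value $d := \lim_n [\Gamma_n : \Gamma_{n+1}]$, and then to argue $d = \he(h)$ by the minimality of $\Gamma_\infty$. This cannot work. By construction $h(\Gamma_\infty) \subset \Gamma_\infty$, hence $\Gamma_\infty \cap h^{-1}(\Gamma_\infty) = \Gamma_\infty$ and $\cp_h(\Gamma_\infty) = 1$, not $d$. Worse, whenever $\he(h) > 1$ one has $[\Gamma : \Gamma_n] \geq \he(h)^n \to \infty$, so $\Gamma_\infty$ has infinite index in $\Gamma$ and is not a multiplicative lattice of $\fn$ at all; in particular ``take $\tilde\Gamma := \Gamma_\infty$'' does not produce the finite-index subgroup the lemma asserts, and the side remark that $\he(h\vert_{\fn'}) = \he(h)$ for the proper $h$-stable subalgebra $\fn'$ generated by $\Gamma_\infty$ is unsubstantiated and is exactly the kind of claim the lemma is meant to prove, not assume.

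What is missing is a device to pin down the stable value $d$ from above, and this is where the paper's argument genuinely differs. The paper embeds an ${\cal O}(h)$-lattice $\Lambda$ into $\Gamma$ (via Lemma~\ref{coset}, $\Gamma$ is a coset union, so such a $\Lambda$ exists with $\Gamma$ a union of $\Lambda$-cosets), runs the parallel additive chain $\Lambda_n = \Lambda_{n-1} \cap h^{-1}(\Lambda_{n-1})$, and uses Lemma~\ref{crit1} to get the exact equality $[\Lambda_n : \Lambda_{n+1}] = \he(h)$ at every step. Comparing $[\Gamma_0 : \Gamma_n]$ with $[\Gamma_0 : \Lambda_n]_{coset}$ via the nesting $\Lambda_n \subset \Gamma_n$ then yields $[\Gamma_0 : \Gamma_1]^n \leq [\Gamma_0:\Lambda_0]_{coset}\,\he(h)^n$, which forces $[\Gamma_0:\Gamma_1] \leq \he(h)$ when $(\Gamma',h)$ is good; combined with Lemma~\ref{multineq} this gives (ii)$\Rightarrow$(i). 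For the final assertion, the correct choice is $\tilde\Gamma := \Gamma_N$, where $N$ is an index past which the non-increasing sequence $[\Gamma_n : \Gamma_{n+1}]$ has stabilized: this is a finite-index (hence multiplicative-lattice) subgroup of $\Gamma$ for which the induced virtual morphism is good, so it is $h$-minimal by the equivalence just proved. Your alternative construction by choosing ${\cal O}(h)$-stable pieces on each graded layer and taking a direct sum would also need extra work to ensure the resulting lattice lies inside $\Gamma$, which the $\Gamma_N$ construction gets for free.
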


\begin{proof} By Lemma \ref{coset}, $\Gamma$ is a coset union.
Any additive lattice  contains a
${\cal O}(h)$-module of finite index. Therefore there is
an ${\cal O}(h)$-lattice $\Lambda$ such
that $\Gamma$ is an union of $\Lambda$-cosets.

Let $\Gamma_0,\Gamma_1,\dots$ be the multiplicative lattices inductively defined by $\Gamma_0=\Gamma$,
$\Gamma_1=\Gamma'$  and
$\Gamma_{n+1}=\Gamma_n\cap h^{-1}(\Gamma_n)$ for $n\geq 1$. Similarly
let $\Lambda_0,\Lambda_1,\dots$ be the additive lattices  defined by $\Lambda_0=\Lambda$,
and $\Lambda_{n+1}=\Lambda_n\cap h^{-1}(\Lambda_n)$ for 
$n\geq 0$.

By  Lemma \ref{good}, the sequence $[\Gamma_n:\Gamma_{n+1}]$
is not increasing. By Lemma \ref{multineq}, we have
$[\Gamma_n:\Gamma_{n+1}]\geq \he(f)$.
Moreover, it follows from Lemma \ref{crit1} that
$[\Lambda_n:\Lambda_{n+1}]= \he(h)$ for all $n$.

Let's assume now that $\Gamma$ is minimal relative to
$h$. We have $[\Gamma_n:\Gamma_{n+1}]= \he(f)$ for all $n$, and therefore the virtual morphism $(\Gamma',h)$ is good.

Conversely, let's assume that  the virtual morphism $(\Gamma',h)$ is good. By hypotheses we have
$[\Gamma_0:\Gamma_n]= [\Gamma_0:\Gamma_1]^n$ and
$[\Lambda_0:\Lambda_n]= \he(h)^n$ for all $n\geq 1$.
It follows that

\centerline{
$[\Gamma_0:\Lambda_n]_{coset}=
[\Gamma_0:\Lambda_0]_{coset}\,\he(h)^n$.}

\noindent Since $\Gamma_n\supset\Lambda_n$, we have
$[\Gamma_0:\Gamma_n]\leq [\Gamma_0:\Lambda_n]_{coset}$.

and therefore

\centerline{
$[\Gamma_0:\Gamma_1]^n \leq 
[\Gamma_0:\Lambda_0]_{coset}\,\he(h)^n$,
for all $n\geq 0$.}

\noindent Hence
$[\Gamma_0:\Gamma_{1}]\leq\he(f)$. 
It follows from Lemma \ref{multineq}
that $[\Gamma_0:\Gamma_{1}]=\he(f)$, 
thus $\Gamma$
is minimal relative to $h$.

In order to prove the last assertion, notice that
the sequence $[\Gamma_n:\Gamma_{n+1}]$ is stationary for 
$n\geq N$, for some $N>0$.
Therefore $(\Gamma_{N+1},h)$ is a good virtual morphism
of $\Gamma_N$. Thus the subgroup 
$\tilde\Gamma=\Gamma_N$ is minimal relative to
$h$.

 \end{proof}

\bigskip\noindent
{\it 6.3 A refined criterion of minimality}

\noindent A refined version of  Lemma \ref{crit1} is now provided. 
Let $\Gamma$ be a multiplicative lattice in $\fn$ and
let $h\in\Aut\,\fn$ be semi-simple. Let $L$ be the field
generated by $\Spec h$, let ${\cal O}$ be its ring of integers and
let ${\cal P}$ be the set of prime ideals of ${\cal O}$. 

Let  $\Lambda$ be an  ${\cal O}(h)$-lattice and let $n>0$ be an integer. Let's assume that 

\centerline{$\Lambda\supset\Gamma$ and $\Gamma$ is an union of $n\Lambda$-cosets.}

\begin{lemma}\label{refined} Let $S$ be the set of divisors of $n$ in 
${\cal P}$. Assume  that

\centerline{$\lambda\equiv 1 \mod n{\cal O}_\pi$,}

\noindent for any $\lambda\in\Spec\,h$ and any $\pi\in S$. Then 
$\Gamma$ is minimal relative
to $h$.  

\end{lemma}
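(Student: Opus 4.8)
Since $\cp_h(\Gamma)\ge\he(h)$ is Lemma~\ref{multineq}, and $\cp_h(\Gamma)=[\Gamma:\Gamma']$ with $\Gamma'=\Gamma\cap h^{-1}(\Gamma)$, it suffices to prove $[\Gamma:\Gamma']\le\he(h)$; by Lemma~\ref{index} one may compute instead the coset index $[\Gamma:\Gamma']_{coset}$. Put $\Lambda_1=\Lambda\cap h^{-1}(\Lambda)$. This is again an $\mathcal{O}(h)$-lattice, so by Lemma~\ref{crit1} one has $[\Lambda:\Lambda_1]=\cp_h(\Lambda)=\he(h)$. The strategy is to show that $\Gamma'=\Gamma\cap\Lambda_1$ and that, because $\Gamma$ lies ``transversally'' between $n\Lambda$ and $\Lambda$, intersecting with $\Lambda_1$ multiplies the number of cosets by exactly $\he(h)$.

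First I would extract from the congruence hypothesis two purely arithmetic facts. For $\pi\in S$ one has $v_\pi(\lambda-1)\ge v_\pi(n)>0$, hence $v_\pi(\lambda)=0$; letting $\pi$ range over the primes above a rational prime $p\mid n$ and descending to $\mathcal{O}(\lambda)$, this says that every $\lambda\in\Spec h$ is a $p$-unit for each $p\mid n$. Consequently the denominator ideal $\pi_\lambda$ is prime to $n$, so $d(\lambda)=\N(\pi_\lambda)$ is prime to $n$, and therefore $\he(h)=\prod\,d(\lambda)^{m_\lambda}$ is prime to $n$. On the other hand $p$-integrality of the eigenvalues lets $h$ act on $\Lambda\otimes\Z_p$ for $p\mid n$, and the congruence $\lambda\equiv 1 \mod n\mathcal{O}_\pi$ gives $(h-\id)(\Lambda\otimes\Z_p)\subset n(\Lambda\otimes\Z_p)$; as $\Lambda/n\Lambda=\bigoplus_{p\mid n}\Lambda\otimes\Z_p/n(\Lambda\otimes\Z_p)$, this yields $h(x)\equiv x\mod n\Lambda$ for every $x\in\Lambda$. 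Finally, since $[\Lambda:\Lambda_1]=\he(h)$ is prime to $n$, multiplication by $n$ is invertible on the finite group $\Lambda/\Lambda_1$, which gives $n\Lambda+\Lambda_1=\Lambda$ and $n\Lambda\cap\Lambda_1=n\Lambda_1$.

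Next I would prove $\Gamma'=\Gamma\cap\Lambda_1$. The inclusion $\subset$ is immediate: if $x\in\Gamma\cap h^{-1}(\Gamma)$ then $x$ and $h(x)$ lie in $\Gamma\subset\Lambda$, so $x\in\Lambda\cap h^{-1}(\Lambda)=\Lambda_1$. Conversely, if $x\in\Gamma\cap\Lambda_1$ then $h(x)\in\Lambda$, and by the congruence $h(x)\in x+n\Lambda\subset\Gamma$ because $\Gamma$ is a union of $n\Lambda$-cosets and $x\in\Gamma$; hence $x\in h^{-1}(\Gamma)$. It then remains to count cosets. Write $\Gamma$ as a disjoint union of $a$ cosets of $n\Lambda$. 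By $n\Lambda+\Lambda_1=\Lambda$ each of these cosets meets $\Lambda_1$, and by $n\Lambda\cap\Lambda_1=n\Lambda_1$ it meets $\Lambda_1$ in exactly one coset of $n\Lambda_1$; so $\Gamma\cap\Lambda_1$ is a disjoint union of $a$ cosets of $n\Lambda_1$, whereas $\Gamma$, being a union of $a$ cosets of $n\Lambda$ and $[n\Lambda:n\Lambda_1]=[\Lambda:\Lambda_1]=\he(h)$, is a disjoint union of $a\,\he(h)$ cosets of $n\Lambda_1$. Therefore $[\Gamma:\Gamma']_{coset}=\he(h)$, so $\cp_h(\Gamma)=\he(h)$ and $\Gamma$ is minimal relative to $h$.

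The main obstacle is the second step: recognizing that the congruence hypothesis forces simultaneously $\gcd(n,\he(h))=1$ and $h\equiv\id \mod n$ on $\Lambda$. Once those two facts are in hand, the identification $\Gamma'=\Gamma\cap\Lambda_1$ and the coset bookkeeping are routine.
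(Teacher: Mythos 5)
Your proof is correct and follows essentially the same route as the paper: establish that $\he(h)$ is prime to $n$ and that $h$ acts as the identity modulo $n$ on $\Lambda$ (the paper packages the second point by passing to a global multiple $H=dh\in\mathcal{O}(h)$ with $d\equiv 1\bmod n$ rather than working $p$-adically, but the content is the same), then use these together with $\Lambda=\Lambda_1+n\Lambda$ and the hypothesis that $\Gamma$ is a union of $n\Lambda$-cosets to bound $[\Gamma:\Gamma']$ by $\he(h)$. One small caution: your assertion ``$h(x)\equiv x\bmod n\Lambda$ for every $x\in\Lambda$'' is only literally meaningful when $h(x)\in\Lambda$, i.e. for $x\in\Lambda_1$; since that is exactly where you use it, the argument goes through, but the wording should be restricted accordingly.
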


\begin{proof}  {\it Step 1.}  Since $\Spec\,h$ lies
in ${\cal O}_\pi$ for all $\pi\in S$, 
there exists a positive integer $d$, which is prime to $n$, such that
$d \lambda\in{\cal O}$ for all $\lambda\in{\cal O}$.
Moreover we can assume that $d\equiv 1\mod n$.

Let $\lambda\in \Spec\,h$.  We have $d \lambda\equiv 1 \mod n{\cal O}_\pi$ for all $\pi\in S$. Therefore we have

\centerline{$d\lambda\in 1 + n{\cal O}$,}

\noindent for all $\lambda\in  \Spec\,h$.
Set $H=dh$.  Since $\Spec\, dH$ and
$\Spec\, (H-1)/n$ lie in ${\cal O}$, 
it follows that

\centerline{$H\in {\cal O}(h)$ and $H\in 1 + n{\cal O}(h)$.}

\noindent {\it Step 2.}  Set $\Lambda'=\Lambda\cap h^{-1}\Lambda$.
Since all eigenvalues  of $h$ are units in ${\cal O}_\pi$ whenever 
$\pi$ divides $n$, the height of $h$ is prime to $n$. By 
Lemma \ref{crit1},
we have $[\Lambda:\Lambda']=\he(h)$. Therefore we get

\centerline{$\Lambda= \Lambda'+n\Lambda$.}

It follows that

\centerline{$\Gamma = \coprod\limits_{1\leq i\leq k}\, g_i+n\Lambda $}

\noindent for some $g_1,...,g_k\in  \Lambda'$,
where $k=[\Gamma:n\Lambda]$ and where $\coprod$ is the symbol of the disjoint union. Since $H(g_i)\equiv g_i\mod n\Lambda$, we get that
$h (g_i)\in g_i+n\Lambda\subset \Gamma$. Therefore we have

\centerline{$\Gamma'\supset \coprod\limits_{1\leq i\leq k}\, g_i+n\Lambda'$,}

Therefore we have $[\Gamma':n\Lambda']\geq k=[\Gamma:n\Lambda]$.
It follows that

\centerline {$[\Gamma:\Gamma']\leq [n\Lambda:n\Lambda']=\he(h)$.}

\noindent By Lemma \ref{multineq}, we have
$[\Gamma:\Gamma']=\he(h)$. Thus $\Gamma$ is minimal relative to $h$.
\end{proof}

\section{Proof of Theorems \ref{Main1} and \ref{Main2}}

\bigskip
\noindent {\it 7.1 Proof of Theorem 2 and 3.}

\noindent
Let $\fn$ be a finite dimensional
 nilpotent Lie algebra  over $\Q$
and let $\fz$ be its center and 
let $\Gamma$ be a multiplicative lattice of $\fn$.

\begin{thm}\label{Main1} The following assertions are equivalent

(i) The group $\Gamma$ is transitive self-similar,

(ii) the group $\Gamma$ is densely self-similar, and

(iii) the Lie algebra $\fn^\C$ admits a special grading.
\end{thm}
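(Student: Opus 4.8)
The plan is to prove the cycle of implications $(iii)\Rightarrow(ii)\Rightarrow(i)\Rightarrow(iii)$, tying everything back to the machinery of the previous chapters. The implication $(ii)\Rightarrow(i)$ is immediate, since an action with dense orbits is in particular transitive (the level-transitive actions correspond to dense orbits, as recalled in \S1). So the real content is in the other two arrows, and I would organize the argument around the self-similar datum / virtual endomorphism correspondence (Lemmas \ref{corr1}, \ref{corr2}) together with the Lie-algebraic translation in Lemma \ref{ssdatum}.

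For $(i)\Rightarrow(iii)$: suppose $\Gamma$ is transitive self-similar. By Lemma \ref{corr1} there is a self-similar datum $(\Gamma',f)$; since $\Gamma$ is a multiplicative lattice of $\fn$ and $f$ is injective (the action is faithful), Lemma \ref{extension} extends $f$ to an automorphism $\tilde f\in\Aut\,\fn$, and Lemma \ref{ssdatum}(i) then gives $\tilde f\in\cS(\fn)$, i.e. $\cS(\fn)\neq\emptyset$. By Lemma \ref{H}(i) this is equivalent to $\fn^\C$ admitting a special grading, which is $(iii)$.

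For $(iii)\Rightarrow(ii)$: this is the main obstacle, and it is exactly where Theorem \ref{Main0} enters. Starting from a special grading of $\fn^\C$, Lemma \ref{H}(i) yields $\cS(\fn)\neq\emptyset$; but to get a \emph{dense}-orbit (i.e. level-transitive) action, by Lemma \ref{corr2} I need a \emph{good} self-similar datum, and this forces me to control the coset index at every level, not just produce some element of $\cS(\fn)$. The strategy is to take the maximal torus $\bf H\subset\bf G=\bf{Aut}\,\fn$ defined over $\Q$, so that the special grading (after conjugation) factors through $\bf H$, which makes assertion $(\cA)$ of Lemma \ref{H} hold; then apply Theorem \ref{Main0} to $\bf H$ with a suitable modulus $n$ (chosen so that $n\Lambda$-cosets describe $\Gamma$ inside a convenient $\cO(h)$-lattice $\Lambda$) to obtain $h\in{\bf H}(\Q)\subset\Aut\,\fn$ with $\Spec\,h|_{\fz}$ containing no algebraic integer \emph{and} $\lambda\equiv 1\bmod n\cO_\pi$ for all eigenvalues $\lambda$ and all $\pi\mid n$. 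The congruence condition (ii) of Theorem \ref{Main0} is precisely the hypothesis of the refined minimality criterion, Lemma \ref{refined}: it guarantees $h(g_i)\equiv g_i$ modulo $n\Lambda$, so that $\Gamma=\Gamma\cap h^{-1}(\Gamma)$ behaves optimally and $\Gamma$ is minimal relative to $h$. By Lemma \ref{mingood}, minimality of $\Gamma$ relative to $h$ is equivalent to the virtual endomorphism $(\Gamma\cap h^{-1}(\Gamma),h)$ being good; since $h\in\cS(\fn)$, Lemma \ref{ssdatum}(i) says it is a self-similar datum, and being good it corresponds via Lemma \ref{corr2} to a faithful self-similar action of $\Gamma$ with dense orbits. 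That gives $(ii)$.

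The one technical point that needs care is the semisimplicity hypothesis in Lemmas \ref{mingood} and \ref{refined}: the $h$ produced from ${\bf H}(\Q)$ lies in a torus, hence is automatically semisimple as an element of $\fgl(\fn)$, so this is not an obstruction but it must be noted explicitly. A second point is the choice of the auxiliary lattice $\Lambda$ and the integer $n$: since $\Gamma$ is a coset union (Lemma \ref{coset}) and every additive lattice contains an $\cO(h)$-lattice of finite index, one can pick an $\cO(h)$-lattice $\Lambda\supset\Gamma$ with $\Gamma$ a union of finitely many $n\Lambda$-cosets for a suitable $n$, and then feed that $n$ into Theorem \ref{Main0}; the lcm trick in Step 2 of the proof of Theorem \ref{Main0} shows we may replace $h$ by a power to arrange the congruence without disturbing membership in $\cS(\fn)$. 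Everything else is bookkeeping with the coset index from \S5.3.
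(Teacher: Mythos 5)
Your proposal follows the same overall strategy as the paper: establish $(ii)\Rightarrow(i)\Rightarrow(\cA)\Leftrightarrow(iii)$ cheaply via Lemmas \ref{corr1}, \ref{ssdatum}, \ref{H}, and concentrate the effort on $(\cA)\Rightarrow(ii)$ by combining Theorem \ref{Main0} with Lemmas \ref{refined}, \ref{mingood}, and \ref{corr2}. You also correctly flag semisimplicity of $h$ as automatic from membership in a torus. So the skeleton is right.

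There is, however, a genuine circularity in your treatment of the choices $\Lambda$, $n$, and $h$, and this is precisely the point where the paper deviates from you. You take $\bf H$ to be \emph{the} maximal torus of $\bf G$, then say ``pick an $\cO(h)$-lattice $\Lambda\supset\Gamma$ with $\Gamma$ a union of $n\Lambda$-cosets, and feed that $n$ into Theorem \ref{Main0}.'' But $\cO(h)$ is only defined once $h$ is chosen, and $h$ is produced by Theorem \ref{Main0} from the integer $n$, which in turn depends on $\Lambda$, which depends on $h$. As written, the recipe is not well-founded. The paper avoids this by \emph{not} working with the full maximal torus: it starts from a semisimple $f\in\cS(\fn)$, sets $\Lambda$ to be the $\cO(f)$-module generated by $\Gamma$ and fixes $n$ accordingly, and then takes $\bf H$ to be $\bf K^0$ where $\bf K$ is the Zariski closure of $\langle f\rangle$. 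The key point bought by this choice is that for \emph{any} $h\in{\bf H}(\Q)$ one has $\Q[h]\subset\Q[f]$, hence $\cO(h)\subset\cO(f)$, so the already-chosen $\Lambda$ is automatically an $\cO(h)$-lattice and Lemma \ref{refined} applies without any circular dependence. (Your version could be salvaged by arguing that the $h$ produced in Theorem \ref{Main0} is generic on the weight lattice of the full $\bf H$ -- since non-trivial characters evaluate to non-integers, in particular non-roots of unity, distinct weights give distinct eigenvalues even after the $\lcm$ power in Step 2 of Theorem \ref{Main0} -- so that $\cO(h)$ is the fixed ring attached to $\bf H$ and $\Lambda$ can be chosen in advance. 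But that argument has to be supplied, and it is cleaner to simply shrink the torus as the paper does.) With the full torus, you also lose the paper's neat observation that $(\cA)$ for $\bf K^0$ is visible directly from $\Spec f^l|_\fz$; you replace it by appealing to $(\cA)$ from Lemma \ref{H}, which is fine but makes your choice of $\bf H$ essential and exposes the circularity above.
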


\begin{proof} 
Let's consider the following assertion

\noindent $({\cal A})$\hskip1cm ${\cal S}(\fn)\neq\emptyset$.

\noindent The implication $(ii)\Rightarrow (i)$ is tautological.
Together with the Lemmas \ref{hypo}(i) and \ref{H}(i), the following implications are already proved

\centerline{$(ii)\Rightarrow (i)\Rightarrow ({\cal A}) \Leftrightarrow (iii)$.}

\noindent Therefore, it is enough to prove that 
$({\cal A})\Rightarrow (ii)$.

\noindent{\it Step 1. Definition of some 
$h\in {\bf G}(\Q)$.}
Let's assume that ${\cal S}(\fn)\neq\emptyset$, and let 
$f\in {\cal S}(\fn)$. Since the semi-simple part of $f$ is
also in ${\bf G}(\Q)$, it can be assumed that $f$ is semi-simple.
Let ${\bf K}\subset {\bf G}$ be the Zariski closure of the subgroup
generated by  $f$  and set ${\bf H}={\bf K}^0$.

Let $\Lambda$ be the ${\cal O}(f)$-module generated
by $\Gamma$. By Lemma \ref{coset}, $\Gamma$ is a coset
union. Therefore $\Lambda$ is a lattice and 
$\Gamma$ is
an union of $n\Lambda$-coset for some positive integer $n$. 

 Let $X({\bf H})$ be the group of characters of
 ${\bf H}$, let $K$ be the splitting field of ${\bf H}$,
let ${\cal O}$ be the ring of integers of $K$,
let ${\cal P}$ be the set of prime ideals of 
${\cal O}$ and let $S$ be set set of all 
$\pi\in{\cal P}$ dividing $n$.

By Theorem 1, there exists $h\in {\bf H}(\Q)$ such that, 
for any non-trivial $\chi\in X$ we have

(i) $\chi(h)$ is not an algebraic integer, and

(ii) $\chi(h)\equiv 1\mod n{\cal O}_\pi$ for any $\pi\in S$.

\noindent
{\it Step 2.}
Let $\Gamma'=\Gamma\cap h^{-1}(\Gamma)$. We claim
that the virtual morphism $(\Gamma',h)$ is a good self-similar
datum.

Since  ${\bf K}\subset {\bf G}$ is the Zariski closure 
of the subgroup generated by  $f$, we have
$\Q[h]\subset\Q[f]$ and therefore 
$\Lambda$ is a ${\cal O}(h)$-lattice.
It follows from Lemma \ref{refined} that the virtual
endomorphism  $(\Gamma',h)$ is good.

Moreover, let $\Omega_0$ be the set of weights of ${\bf H}$ over 
$\fz^{\overline \Q}$. There is an integer $l$ such that
$f^l\in {\bf K}^0={\bf H}$. The spectrum of $f^l$ on
$\fz^{\overline \Q}$ are the numbers $\chi(f^l)$ when
$\chi$ runs over $\Omega_0$. Thus it follows that
$\Omega_0$ does not contain the trivial character,
hence $h$ belongs to ${\cal S}(\fn)$.

Therefore by Lemma \ref{ssdatum}, the virtual
endomorphism  $(\Gamma',h)$ is a good self-similar datum. Thus by
Lemma \ref{corr2}, $\Gamma$ is a densely self-similar group.

\end{proof}

\begin{thm}\label{Main2} The following assertions are equivalent

(i) The group $\Gamma$ is freely  self-similar,

(ii) the group $\Gamma$ is freely densely self-similar, and

(iii) the Lie algebra $\fn^\C$ admits a very special grading.
\end{thm}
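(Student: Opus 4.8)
The plan is to mirror the proof of Theorem \ref{Main1} almost verbatim, replacing the center $\fz$ by the whole Lie algebra $\fn$ and the class $\cal S(\fn)$ by $\cal V(\fn)$, using the ``very special'' versions of the lemmas that have already been prepared. First I would introduce the auxiliary assertion $(\cal A^0)$: $\cal V(\fn)\neq\emptyset$. By Lemma \ref{hypo}(ii) and Lemma \ref{H}(ii) we already know $(\cal A^0)\Leftrightarrow(iii)$. The implication $(ii)\Rightarrow(i)$ is tautological. By Lemma \ref{ssdatum}(ii), a free self-similar datum forces its Malcev extension into $\cal V(\fn)$, so $(i)\Rightarrow(\cal A^0)$. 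Hence the only substantive thing to prove is $(\cal A^0)\Rightarrow(ii)$: if $\cal V(\fn)\neq\emptyset$ then $\Gamma$ admits a free self-similar action with dense orbits.

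For that implication I would follow Step 1 and Step 2 of the proof of Theorem \ref{Main1} word for word. Pick $f\in\cal V(\fn)$; its semisimple part still lies in $\cal V(\fn)$ (the eigenvalues are unchanged), so assume $f$ semisimple. Let $\mathbf K$ be the Zariski closure of $\langle f\rangle$, put $\mathbf H=\mathbf K^0$, let $\Lambda$ be the $\cal O(f)$-lattice generated by $\Gamma$, write $\Gamma$ as a union of $n\Lambda$-cosets, let $K$ be the splitting field of $\mathbf H$, and let $S$ be the set of primes of $\cal O_K$ dividing $n$. Applying Theorem \ref{Main0} to $\mathbf H$ produces $h\in\mathbf H(\Q)$ with $\chi(h)$ not an algebraic integer for every non-trivial character $\chi\in X(\mathbf H)$, and $\chi(h)\equiv 1\bmod n\cal O_\pi$ for $\pi\in S$. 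Set $\Gamma'=\Gamma\cap h^{-1}(\Gamma)$. Exactly as before, since $\Q[h]\subset\Q[f]$, $\Lambda$ is an $\cal O(h)$-lattice, so Lemma \ref{refined} gives that $(\Gamma',h)$ is good.

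The one place where the argument genuinely diverges from Theorem \ref{Main1} — and the step I expect to be the main obstacle — is showing $h\in\cal V(\fn)$, i.e. that \emph{no} eigenvalue of $h$ on all of $\fn^{\overline\Q}$ is an algebraic integer, not merely the eigenvalues on $\fz^{\overline\Q}$. The point is that one must know the trivial character does not occur among the weights of $\mathbf H$ on the \emph{whole} space $\fn^{\overline\Q}$. This is where $(\cal A^0)$ is used: if $0$ were a weight of $\mathbf H$ on $\fn^{\overline\Q}$ there would be a nonzero $\mathbf H(\overline\Q)$-fixed vector in $\fn^{\overline\Q}$, contradicting $H^0(\mathbf H(\overline\Q),\fn^{\overline\Q})=0$, which is precisely $(\cal A^0)$ as established inside the proof of Lemma \ref{H}(ii). (Equivalently: some power $f^l$ lies in $\mathbf H$, its eigenvalues on $\fn^{\overline\Q}$ are the values $\chi(f^l)$ for $\chi$ running over the weights, and since $f\in\cal V(\fn)$ none of these is a root of unity, so the trivial character is not a weight.) Consequently every non-trivial weight $\chi$ of $\mathbf H$ on $\fn^{\overline\Q}$ satisfies $\chi(h)\notin\overline\Z$, so $\Spec h$ contains no algebraic integer and $h\in\cal V(\fn)$.

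With $h\in\cal V(\fn)$ in hand, Lemma \ref{ssdatum}(ii) shows $(\Gamma',h)$ is a free self-similar datum; since it is also good, Lemma \ref{corr2} produces a free self-similar action of $\Gamma$ on $A^\omega$ with dense orbits, where $A\simeq\Gamma/\Gamma'$. This establishes $(\cal A^0)\Rightarrow(ii)$ and closes the cycle $(ii)\Rightarrow(i)\Rightarrow(\cal A^0)\Leftrightarrow(iii)\Rightarrow(ii)$, proving the theorem. The whole proof is therefore structurally identical to that of Theorem \ref{Main1}; the only new ingredient is the observation that ``$\fz$'' gets replaced by ``$\fn$'' uniformly, and the density/freeness bookkeeping is already packaged in Lemmas \ref{ssdatum}, \ref{refined}, \ref{mingood} and \ref{corr2}.
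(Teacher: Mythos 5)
Your overall strategy matches the paper's exactly: mirror the proof of Theorem \ref{Main1} with $\fz$ replaced by $\fn$ and ${\cal S}(\fn)$ by ${\cal V}(\fn)$, and the step you single out (showing $h\in{\cal V}(\fn)$ by checking that the trivial character is not among the weights of ${\bf H}$ on $\fn^{\overline\Q}$, via $f^l\in{\bf H}$ and $f\in{\cal V}(\fn)$) is carried out correctly and in the same way the paper indicates.

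There is, however, one genuine gap in your step $(i)\Rightarrow({\cal A}^0)$. You apply Lemma \ref{ssdatum}(ii) to conclude that a free self-similar datum has Malcev extension in ${\cal V}(\fn)$, but assertion (i) only supplies a free self-similar \emph{action}, not a datum. The correspondence between self-similar data and actions (Lemma \ref{corr1}) requires the action to be faithful and \emph{transitive}. Theorem \ref{Main1}(i) assumes transitivity from the start, whereas Theorem \ref{Main2}(i) does not, so you cannot immediately produce a datum from (i). The missing step --- and the one thing the paper's proof of Theorem \ref{Main2} adds before declaring ``the rest is identical to the previous proof'' --- is to restrict the given free self-similar action on $A^\omega$ to $A'^\omega$, where $A'$ is any single $\Gamma$-orbit in $A$. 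The restricted action is still free (hence faithful), still self-similar, and now transitive, so it furnishes a free self-similar datum; only then does Lemma \ref{ssdatum}(ii) yield ${\cal V}(\fn)\neq\emptyset$.
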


\begin{proof} Let's assume Assertion (i). Let's
consider a free self-similar action of $\Gamma$
on some $A^\omega$ and let 
$A'$ be any $\Gamma$-orbit in $A$. Then the action
of $\Gamma$ on $A'^\omega$ is free transitive self-similar, thus   $\Gamma$ is freely transitive self-similar.

The rest of the proof is identical to the previous proof, except that

1)  the assertion $({\cal A})$ is replaced by
$({\cal A'})$: ${\cal V}(\fn)\neq\emptyset$,

2) the  Lemmas \ref{hypo}(ii) and \ref{H}(ii) are used instead of
Lemmas \ref{hypo}(i) and \ref{H}(i) in order to prove that
$(ii)\Rightarrow (i)\Rightarrow ({\cal A'}) 
\Leftrightarrow (iii)$,

3) the proof that ${\cal A'}\Rightarrow (ii)$ uses the weights 
of ${\bf H}$ and the eigenvalues of $f$ on $\fn$ instead of $\fz$.
\end{proof}

\bigskip
\noindent {\it 7.2 Manning's Theorem}

\noindent
Let $N$ be a CSC nilpotent
 Lie group $N$ and let $\Gamma$ be a cocompact lattice.
The manifold $M=N/\Gamma$ is called a {\it nilmanifold}.

A diffeomorphism
$f:M\to M$ is called an {\it Anosov diffeomorphism} if

(i) there is a continuous splitting of the tangent bundle $TM$ as
$TM=E_u\oplus E_s$ which is invariant by $df$, and

(ii) there is a Riemannian metric relative to which
$df\vert_{E_s}$ and $df^{-1}\vert_{E_u}$ are contracting.

For any $x\in M$,
$f$ induces a group automorphism  $f_*$ of
$\Gamma\simeq \pi_1(M)$.
By Lemma \ref{extension}, $f_*$ extends to an isomorphism
$\tilde f_*:\fn^{\R}\to\fn^{\R} $, where 
$\fn^{\R}$ is the Lie algebra of $N$. Strictly speaking,
$\tilde f_*$ is only defined up to an inner automorphism.
Since $f_*$ is well defined modulo the unipotent radical of 
$\Aut\,\fn^{\R}$, the set $\Spec f_*$ is unambiguously defined.

\begin{Manningthm} The set $\Spec f_*$ contains no root of unity.
\end{Manningthm}

See \cite{Ma}. Later on, 
A. Manning proved a much stronger result. Namely $\Spec f_*$ contains no eigenvalues of absolute value $1$, and $f$ is topologically conjugated to an Anosov automorphism, see \cite{Ma2}.

\bigskip
\noindent {\it 7.3 A Corollary for nilmanifolds with an Anosov diffeomorphim}

\begin{cor}\label{Anosov} 
Let $M$ be a nilmanifold endowed with an
Anosov diffeomorphism. Then 
$\pi_1(M)$ is freely densely self-similar.
\end{cor}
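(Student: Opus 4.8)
The plan is to deduce Corollary \ref{Anosov} from Theorem \ref{Main2} together with Manning's Theorem. By Theorem \ref{Main2}, it suffices to show that $\fn^\C$ admits a very special grading, where $\fn$ is the Malcev Lie algebra of $\Gamma=\pi_1(M)$; equivalently, by Lemma \ref{H}(ii), it suffices to exhibit an element of $\cV(\fn)$, i.e. an automorphism of $\fn$ whose spectrum contains no algebraic integer. However, what the Anosov diffeomorphism gives us is an automorphism whose spectrum contains no \emph{root of unity} (Manning's Theorem), which is weaker. So the first step is to extract the right algebraic object from the Anosov data, and the main work is bridging this gap.

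First I would record the following: an Anosov diffeomorphism $f:M\to M$ induces $f_*\in\Aut\,\Gamma$, which by Lemma \ref{extension} extends to $\tilde f_*\in\Aut\,\fn^\R$, well defined up to the unipotent radical of $\Aut\,\fn^\R$, so that $\Spec\,\tilde f_*$ is unambiguous. Manning's Theorem says this spectrum contains no root of unity. Now $f_*$ is an automorphism of $\Gamma$, hence of the \emph{rational} Malcev Lie algebra $\fn$ (not merely $\fn^\R$): indeed Lemma \ref{extension} applied to the morphism $f_*:\Gamma\to\Gamma$ of multiplicative lattices in $\fn$ produces $\widetilde{f_*}\in\Aut\,\fn$ directly. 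Thus we actually have an element $g\in\Aut\,\fn=\bG(\Q)$ with $\Spec\,g$ containing no root of unity. The key point is then: by Lemma \ref{root}, since $\Spec\,g$ contains no root of unity, $\fn^{\overline\Q}$ admits a very special grading, hence by Lemma \ref{hypo}(ii) so does $\fn^\C$.

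Then the conclusion is immediate: $\fn^\C$ admits a very special grading, so by Theorem \ref{Main2} the group $\Gamma=\pi_1(M)$ is freely densely self-similar, which is exactly the assertion of Corollary \ref{Anosov}. Wrapping up, the logical chain is: Anosov diffeomorphism $\leadsto$ automorphism $g$ of the rational Malcev algebra $\fn$ with no roots of unity in its spectrum (Manning) $\leadsto$ very special grading of $\fn^{\overline\Q}$ (Lemma \ref{root}) $\leadsto$ very special grading of $\fn^\C$ (Lemma \ref{hypo}(ii)) $\leadsto$ freely densely self-similar (Theorem \ref{Main2}).

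The step I expect to be the main (only real) obstacle is the first reduction: one must be careful that the automorphism $f_*$ is genuinely an automorphism of the rational Lie algebra $\fn$ and not just of $\fn^\R$, and that the ambiguity modulo inner/unipotent automorphisms does not affect $\Spec$; but this is handled by Lemma \ref{extension} applied directly to $f_*:\Gamma\to\Gamma$, since $\Gamma$ is a multiplicative lattice in $\fn$, and by the remark in Section 7.2 that the unipotent radical ambiguity leaves $\Spec\,f_*$ well defined. Everything else is a direct invocation of results already established (Lemma \ref{root}, Lemma \ref{hypo}, Theorem \ref{Main2}), so once the reduction is made precise the proof is short.
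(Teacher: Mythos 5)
Your proof is correct and follows essentially the same route as the paper: Manning's Theorem gives an automorphism of the rational Malcev algebra with no roots of unity in its spectrum, Lemma \ref{root} converts this into a very special grading of $\fn^{\overline\Q}$ (hence of $\fn^\C$), and Theorem \ref{Main2} concludes. The extra care you take about the rational structure (invoking Lemma \ref{extension} directly on $f_*:\Gamma\to\Gamma$) and the explicit pass through Lemma \ref{hypo}(ii) are details the paper leaves implicit, but there is no difference in substance.
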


\begin{proof} By definition, we have  $M=N/\Gamma$, where
$N$ is a CSC nilpotent  Lie group
and $\Gamma\simeq \pi_1(M)$ is a cocompact lattice. Set 
$\fn^\R=\Lie\,N$ and $\fn^\C=\C\otimes\fn^\R$. By
Manning's Theorem and Lemma \ref{root}, $\fn^\C$ has a very special grading.
Therefore $\Gamma$ is freely densely self-similar by Theorem 3.
\end{proof}

\bigskip
\noindent
 {\it 7.4 Characterisation of fractal FGTF nilpotent groups} 

\noindent For completeness purpose, we will now 
investigate the non-negative gradings of $\fn^C$.
Unlike Theorems 2 and 3, the proof of Propositions 
\ref {nonneg} and \ref{positive}
are quite obvious.

Let $\fn^{\Q}$ be a finite dimensional nilpotent Lie algebra
and let $\Gamma$ be a multiplicative lattice in $\fn$.
Set $\fn^{\C}=\C\otimes \fn^{\Q}$.

\begin{prop} \label{nonneg} The following assertions are equivalent

(i) The group $\Gamma$ is fractal

(ii) $\fn^{\C}$ admits a non-negative special grading.

(iii) $\fn^{\Q}$ admits a non-negative special grading.
\end{prop}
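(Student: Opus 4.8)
The plan is to establish the cycle $(i)\Rightarrow(iii)\Rightarrow(ii)\Rightarrow(i)$, exploiting the machinery already assembled. The implication $(iii)\Rightarrow(ii)$ is immediate: a non-negative special grading of $\fn^{\Q}$ yields one of $\fn^{\C}$ by tensoring with $\C$. For $(ii)\Rightarrow(iii)$ I would invoke Lemma \ref{H+}, which states precisely that $\fn^{\Q}$ admits a non-negative special grading iff $\fn^{\C}$ does iff ${\cal F}(\fn)\neq\emptyset$; so these two assertions are already known to be equivalent, and in fact equivalent to ${\cal F}(\fn)\neq\emptyset$. The real content is therefore the equivalence of $(i)$ with the non-emptiness of ${\cal F}(\fn)$.

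For the direction $(i)\Rightarrow(iii)$, suppose $\Gamma$ is fractal. By the correspondence of subsection 1.5 together with Lemma \ref{corr1}, a faithful fractal action gives a fractal self-similar datum $(\Gamma',f)$ with $f(\Gamma')=\Gamma$. Since a fractal datum is in particular a self-similar datum, $f$ is injective (the $f$-core is trivial, and injectivity follows because the kernel of $f$ would be a normal subgroup contained in the $f$-core — or one argues via the extension $\tilde f$). Then Lemma \ref{ssdatum}(iii) tells us directly that the extension $\tilde f\in\Aut\,\fn$ lies in ${\cal F}(\fn)$, so ${\cal F}(\fn)\neq\emptyset$, hence by Lemma \ref{H+} the algebra $\fn^{\Q}$ (and $\fn^{\C}$) admits a non-negative special grading.

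For the converse $(iii)\Rightarrow(i)$, the task is to manufacture an actual fractal self-similar datum, not merely an automorphism in ${\cal F}(\fn)$. Starting from ${\cal F}(\fn)\neq\emptyset$ pick $f\in{\cal F}(\fn)$; replacing $f$ by its semisimple part (which still lies in ${\cal F}(\fn)$, since passing to $\Q[h]$-filtrations and the Chevalley decomposition preserves the relevant spectral conditions — cf.\ the proof of Lemma \ref{H}) we may assume $f$ semisimple. Now set $g=f^{-1}$, so all eigenvalues of $g$ are algebraic integers and none of those of $g|_{\fz}$ is an algebraic unit. Let $\Lambda$ be an ${\cal O}(g)$-lattice in $\fn$ which is simultaneously a multiplicative lattice — here one uses Lemma \ref{coset} to see a multiplicative lattice is a coset union and then refines to an ${\cal O}(g)$-lattice. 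Since $g(\Lambda)\subset\Lambda$ (as $g\in{\cal O}(g)$ acts integrally), the morphism $f=g^{-1}$ maps some finite-index multiplicative sublattice $\Gamma'\subset\Gamma$ onto $\Gamma$; the crux is to arrange $f(\Gamma')=\Gamma$ exactly, which one does by taking $\Gamma$ itself to be (a suitable power-stable piece of) this ${\cal O}(g)$-lattice so that the self-similar datum becomes fractal, invoking Lemma \ref{mingood} to pass to a minimal — hence good — sublattice on which $f^{-1}$ acts surjectively. Finally Lemma \ref{ssdatum}(i) confirms $(\Gamma',f)$ is a self-similar datum (as $f\in{\cal S}(\fn)\supset{\cal F}(\fn)$), and fractality plus faithfulness give that $\Gamma$ is fractal.

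\textbf{Main obstacle.} The delicate step is $(iii)\Rightarrow(i)$: producing a datum that is genuinely \emph{fractal}, i.e.\ with $f(\Gamma')=\Gamma$ on the nose, rather than merely exhibiting $f\in{\cal F}(\fn)$ or a self-similar datum with dense orbits. One must choose the multiplicative lattice $\Gamma$ compatibly with the integral structure of $g=f^{-1}$ so that surjectivity of $f$ onto $\Gamma$ holds; the ${\cal O}(g)$-lattice construction together with Lemma \ref{mingood} (passage to a stationary, good sublattice) is what makes this work, and verifying that the resulting $\Gamma'=\Gamma\cap f^{-1}(\Gamma)$ satisfies $f(\Gamma')=\Gamma$ is the one point requiring care. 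The remaining implications are formal consequences of Lemmas \ref{H+} and \ref{ssdatum}.
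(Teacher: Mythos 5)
The directions $(ii)\Leftrightarrow(iii)$ via Lemma \ref{H+} and $(i)\Rightarrow(iii)$ via Lemma \ref{ssdatum}(iii) match the paper's argument and are fine: your remark that surjectivity of $f:\Gamma'\to\Gamma$ forces injectivity (Hirsch length) handles the hypothesis of Lemma \ref{ssdatum}.

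The implication $(iii)\Rightarrow(i)$ is where the proposal has a genuine gap, and you have correctly located the difficulty but the proposed fix does not close it. Starting from an arbitrary $f\in\mathcal{F}(\fn)$ and an $\mathcal{O}(g)$-additive lattice $\Lambda\supset\Gamma$ with $g(\Lambda)\subset\Lambda$, there is no reason that $g(\Gamma)\subset\Gamma$ or that $f=g^{-1}$ carries some finite-index multiplicative sublattice \emph{onto} the prescribed $\Gamma$. The assertion ``the morphism $f=g^{-1}$ maps some finite-index multiplicative sublattice $\Gamma'\subset\Gamma$ onto $\Gamma$'' is a non-sequitur: stability of the additive lattice $\Lambda$ under $g$ says nothing about the multiplicative lattice $\Gamma$ sitting inside it, and the passage from an additive $\mathcal{O}(g)$-lattice to a multiplicative lattice is not a ``refinement'' of Lemma \ref{coset}, which says only that $\Gamma$ is a union of cosets, not that one can choose an additive $\mathcal{O}(g)$-lattice that is closed under the Campbell--Hausdorff product. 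Invoking Lemma \ref{mingood} does not help either: it produces a finite-index sublattice $\tilde\Gamma$ on which the virtual morphism $(\tilde\Gamma\cap h^{-1}\tilde\Gamma,h)$ is \emph{good} (constant index along the tower), but goodness is strictly weaker than fractality $f(\Gamma')=\Gamma$, which is a surjectivity statement. Also, $\Gamma$ is given in the proposition; you cannot ``take $\Gamma$ itself to be'' a conveniently chosen lattice.

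The paper's proof of $(iii)\Rightarrow(i)$ avoids all this by \emph{constructing} a specific automorphism adapted to the given $\Gamma$: choose a graded additive lattice $\Lambda=\oplus_k\Lambda_k$ containing $\Gamma$, pick $d$ so that $\Gamma$ is a union of $d\Lambda$-cosets, and let $g$ act by $(d+1)^k$ on $\fn_k^\Q$. The congruence $(d+1)^k-1\equiv 0\pmod d$ then gives $g(x)\in x+d\Lambda\subset\Gamma$ for $x\in\Gamma$, so $g(\Gamma)\subset\Gamma$ on the nose, and $(\Gamma'=g(\Gamma),\,f=g^{-1})$ is a fractal datum. The key move you are missing is that one does not take an arbitrary element of $\mathcal{F}(\fn)$ coming from Lemma \ref{H+}; one uses the grading directly to build an automorphism with the exact divisibility properties needed to stabilize $\Gamma$.
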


\begin{proof} It follows from Lemma \ref{H+} that Assertions (ii) and (iii) are equivalent.

\noindent {\it Proof that  (i) $\Rightarrow$  (ii).}
By assumption, there is a fractal datum
$(\Gamma',f)$. Let $g:\Gamma\rightarrow \Gamma'$ be the inverse of $f$ and let
$\tilde g\in\Aut\,\fn$ be its unique extension.

Let $\Lambda\subset \fn$ be the additive subgroup generated by $\Gamma$. By Lemmas \ref{coset}, $\Lambda$ is an additive lattice. Since
we have $\tilde g(\Lambda)\subset \Lambda$, it follows that
all eigenvalues of $\tilde g$ are algebraic integers. 

Moreover $(\Gamma',g^{-1})$ is a self-similar datum, thus
$\Spec\,\tilde g^{-1}\vert_\fz$ contains no root of unity.
Therefore, by Lemma \ref{H+}, Assertion (ii) holds.

\noindent  {\it Proof that  (iii) $\Rightarrow$ (i).}
Let's assume Assertion (iii) and let

\centerline{$\fn^\Q=\oplus_{k\geq 0}\,\fn_k^\Q$}

\noindent be a non-negative special grading of $\fn^\Q$.

By Lemma \ref{coset}, $\Gamma$ lies in a lattice $\Lambda$.
 Since it is  possible to enlarge 
$\Lambda$, we can assume that

\centerline{$\Lambda=\oplus_{k\geq 0}\,\Lambda_k$, }

\noindent where $\Lambda_k=\Lambda\cap \fn_k^\Q$. 
Since $\Gamma$ is a coset union, there 
is an integer $d\geq 1$ such that  $\Gamma$ is an union of 
$d\Lambda$-cosets.

Let $g$ be the automorphism of $\fn^\Q$ defined by
$g(x)=(d+1)^k\,x$ if $x\in \fn_k^\Q$. 
We claim that $g(\Gamma)\subset \Gamma$. Let 
$x\in \Gamma$ and let $x=\sum_{k\geq 0}\,x_k$ be its decomposition into homogenous components. We have

\centerline{$g(x)=x+\sum_{k\geq 1}\,((d+1)^k-1)x_k$.}

\noindent By hypothesis each homogenous component $x_k$ belongs to 
$\Lambda$. Since $(d+1)^k-1$ is divisible by $d$,
we have $g(x)\in x+ d\Lambda\subset \Gamma$ and the claim is proved.

Set $\Gamma'=g(\Gamma)$ and let $f:\Gamma'\rightarrow \Gamma$ be the inverse of  $g$. It is clear that 
$(\Gamma',f)$ is a fractal datum for $\Gamma$, what proves Assertion (i).
\end{proof}

\begin{prop} \label{positive}
The following assertions are equivalent

(i) The group $\Gamma$ is freely fractal

(ii) $\fn^{\C}$ admits a positive grading.

(iii) $\fn^{\Q}$ admits a positive grading.
\end{prop}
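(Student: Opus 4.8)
The plan is to mirror the proof of Proposition \ref{nonneg} almost verbatim, replacing ``non-negative special grading'' by ``positive grading'' throughout, and using Lemma \ref{H++} in place of Lemma \ref{H+} to get the equivalence $(ii)\Leftrightarrow(iii)$ for free. So the real content is the equivalence $(i)\Leftrightarrow(iii)$, which I would prove as two implications.

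For $(i)\Rightarrow(iii)$: suppose $(\Gamma',f)$ is a free fractal datum, let $g=f^{-1}\in\Aut\,\fn$ be the extension of the inverse (using Lemma \ref{extension}, noting $f$ is injective since the action is free), and let $\Lambda\subset\fn$ be the additive lattice generated by $\Gamma$ (a lattice by Lemma \ref{coset}). Since $\tilde g(\Lambda)\subset\Lambda$, every eigenvalue of $\tilde g$ is an algebraic integer, so $\tilde g\in{\cal F}(\fn)$ as in the proof of Proposition \ref{nonneg}; but now, because the datum is \emph{free}, Lemma \ref{ssdatum}(ii) applies to $(\Gamma',g^{-1})$ and gives $\tilde g^{-1}\in{\cal V}(\fn)$, i.e. $\Spec\,\tilde g$ (on all of $\fn$, not just $\fz$) contains no algebraic unit. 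Hence $\tilde g\in{\cal F}(\fn)\cap{\cal V}(\fn)={\cal F}^+(\fn)$, and Lemma \ref{H++} yields a positive grading of $\fn^{\Q}$.

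For $(iii)\Rightarrow(i)$: given a positive grading $\fn^{\Q}=\oplus_{k\geq 1}\,\fn^{\Q}_k$, enlarge $\Lambda$ (the lattice containing $\Gamma$) to a graded lattice $\Lambda=\oplus_{k\geq 1}\,\Lambda_k$, choose $d\geq 1$ with $\Gamma$ a union of $d\Lambda$-cosets, and define $g\in\Aut\,\fn^{\Q}$ by $g(x)=(d+1)^k x$ on $\fn^{\Q}_k$. Exactly as before, $(d+1)^k-1$ is divisible by $d$ for $k\geq 1$, so $g(\Gamma)\subset\Gamma$; setting $\Gamma'=g(\Gamma)$ and $f=g^{-1}:\Gamma'\to\Gamma$ gives a fractal datum. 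The one new thing to check is that this datum is \emph{free}: since the grading is positive, $\fn^{\Q}_0=0$, so the extension $\tilde f=g^{-1}$ has $\Spec\,\tilde f=\{(d+1)^{-k}:k\geq 1\}$, none of which is an algebraic unit (each is $<1$ in absolute value and is the inverse of an integer), whence $\tilde f\in{\cal V}(\fn)$ and Lemma \ref{ssdatum}(ii) says $(\Gamma',f)$ is a free self-similar datum. Together with $f(\Gamma')=\Gamma$ this makes it a free fractal datum, giving $(i)$.

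I do not expect a genuine obstacle here: the argument is a routine adaptation of Proposition \ref{nonneg}, and indeed the paper itself announces it will be skipped. The only point requiring a moment's care is keeping track of which of the sets ${\cal S},{\cal V},{\cal F},{\cal F}^+$ is relevant at each step — in particular remembering that positivity of the grading corresponds to controlling $\Spec$ on all of $\fn$ (via ${\cal V}$ and hence ${\cal F}^+$) rather than merely on the center $\fz$ — and invoking Lemma \ref{ssdatum}(ii) rather than (i), together with Lemma \ref{H++} rather than Lemma \ref{H+}.
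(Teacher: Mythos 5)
Your overall plan is exactly the paper's: the author declares the proof of Proposition~\ref{positive} ``strictly identical'' to that of Proposition~\ref{nonneg}, with ${\cal F}^+(\fn)$ and Lemma~\ref{H++} replacing ${\cal F}(\fn)$ and Lemma~\ref{H+}, and with Lemma~\ref{ssdatum}(ii) supplying the freeness. Your handling of $(ii)\Leftrightarrow(iii)$ and of $(iii)\Rightarrow(i)$ is correct.

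There is, however, a persistent $\tilde g\leftrightarrow\tilde g^{-1}$ slip in your $(i)\Rightarrow(iii)$ step that makes the passage internally contradictory as written. You say ``every eigenvalue of $\tilde g$ is an algebraic integer, so $\tilde g\in{\cal F}(\fn)$''; but ${\cal F}(\fn)\subset{\cal S}(\fn)$ by definition, so an element of ${\cal F}(\fn)$ must have \emph{no} algebraic integers in its spectrum on $\fz$, which $\tilde g$ manifestly fails. You then correctly obtain $\tilde g^{-1}\in{\cal V}(\fn)$ from Lemma~\ref{ssdatum}(ii), yet conclude $\tilde g\in{\cal F}(\fn)\cap{\cal V}(\fn)$ --- contradicting what you just proved, since $\Spec\,\tilde g$ being made of algebraic integers rules out $\tilde g\in{\cal V}(\fn)$. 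The element that lands in ${\cal F}^+(\fn)$ is $\tilde f=\tilde g^{-1}$: it lies in ${\cal V}(\fn)\subset{\cal S}(\fn)$ by Lemma~\ref{ssdatum}(ii), and $\Spec\,\tilde f^{-1}=\Spec\,\tilde g$ consists of algebraic integers, so $\tilde f\in{\cal F}(\fn)$ as well, whence $\tilde f\in{\cal F}^+(\fn)$ and Lemma~\ref{H++} applies. (This is also what Lemma~\ref{ssdatum}(iii) records: it is $\tilde f$, not its inverse, that belongs to ${\cal F}(\fn)$.) The fix is purely clerical, but you should make it.
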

 
 Since the proof is strictly identical, it will be skipped.

\section{Not self-similar  FGTF nilpotent groups
 and affine nilmanifolds}
 
 This section provides an example of a FGTF nilpotent group 
 which is not even self-similar, see subsection 8.6. The end of the section is about the Milnor-Scheuneman conjecture.

 \bigskip 
\noindent {\it 8.1 FGTF nilpotent groups with rank one center}

\noindent
Let $\Gamma$ be a FGTF nilpotent group and let 
$Z(\Gamma)$ be its center.

\begin{lemma}\label{TransCond} Let's asssume that $\Gamma$ is 
self-similar and
$Z(\Gamma)\simeq \Z$. Then $\Gamma$ is transitive self-similar. 
\end{lemma}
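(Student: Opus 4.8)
The plan is to reduce everything, via Theorem~\ref{Main1}, to the statement that $\mathcal{S}(\fn)\neq\emptyset$, where $\fn$ is the Malcev Lie algebra of $\Gamma$ (so $\Gamma$ is a multiplicative lattice of $\fn$) and $\fz$ is its center: by Lemma~\ref{H}(i) this is equivalent to $\fn^{\C}$ admitting a special grading, which by Theorem~\ref{Main1} forces $\Gamma$ to be transitive self-similar. Since $Z(\Gamma)=\Gamma\cap\fz\simeq\Z$ is an additive lattice of $\fz$, we have $\dim_{\Q}\fz=1$; I fix a generator $z_0$ of $Z(\Gamma)$, so $\fz=\Q z_0$ and $\Gamma\cap\fz=\langle z_0\rangle$. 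The observation that drives the argument is that, $\fz$ being one-dimensional and rational, every $f\in\Aut\,\fn$ acts on $\fz$ by a scalar $\lambda_f\in\Q^{*}$ with $\lambda_f^{-1}=\lambda_{f^{-1}}$; hence, \emph{if} $\mathcal{S}(\fn)=\emptyset$, then the rational numbers $\lambda_f$ and $\lambda_f^{-1}$ are algebraic integers, so $\lambda_f=\pm 1$, for every $f$. So I will assume $\mathcal{S}(\fn)=\emptyset$ and contradict the hypothesis that $\Gamma$ is self-similar.

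Starting from a faithful self-similar action of $\Gamma$ on some $A^{\omega}$, I pick representatives $a_1,\dots,a_m$ of the finitely many $\Gamma$-orbits on $A$, put $\Gamma_i=\Stab(a_i)$ (finite index in $\Gamma$), and let $f_i:\Gamma_i\to\Gamma$, $\gamma\mapsto\gamma_{a_i}$, be the section maps. As in the transitive case (cf.\ Propositions 2.7.4--2.7.5 of \cite{N}), the kernel of the action on $A^{\omega}$ is the largest normal subgroup $K\triangleleft\Gamma$ with $K\subseteq\Gamma_i$ and $f_i(K)\subseteq K$ for all $i$; faithfulness says this $K$ is trivial, and I will contradict that by producing a nontrivial such $K$ inside $\langle z_0\rangle$. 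For each $i$, since $\Gamma_i$ contains a generating set of $\fn$, its center is $Z(\Gamma_i)=\Gamma_i\cap\fz=\langle z_0^{d_i}\rangle$ for some $d_i\geq 1$. If $f_i$ is injective, Lemma~\ref{extension} extends it to some $\tilde f_i\in\Aut\,\fn$, which (by the standing assumption) acts on $\fz$ by $\pm 1$; hence $f_i$ carries $\langle z_0^{d_i}\rangle$ onto itself, and I set $c_i=d_i$. If $f_i$ is not injective, then $\Ker f_i$ is a nontrivial normal subgroup of the nilpotent group $\Gamma_i$, so it meets $Z(\Gamma_i)=\langle z_0^{d_i}\rangle$ in a nontrivial subgroup $\langle z_0^{c_i}\rangle$ (with $d_i\mid c_i$), and $f_i(\langle z_0^{c_i}\rangle)=\{1\}$. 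Putting $c=\lcm(c_1,\dots,c_m)$, the subgroup $K=\langle z_0^{c}\rangle$ is nontrivial, central (hence normal) in $\Gamma$, contained in every $\Gamma_i$, and satisfies $f_i(K)\subseteq K$ for all $i$ — the desired contradiction. Therefore $\mathcal{S}(\fn)\neq\emptyset$, and the reduction of the first paragraph completes the proof.

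I expect the main obstacle to lie not in the final counting but in two points where the transitive theory must be extended: first, identifying the kernel of a non-transitive self-similar action with the joint core of the family $\{(\Gamma_i,f_i)\}$ (the paper has so far recorded only the transitive statement); and second, dealing with section maps $f_i$ that are not injective, where Lemma~\ref{extension} is unavailable and one must instead use that a nontrivial normal subgroup of a nilpotent group meets the center. Once these are settled, the one-dimensionality of the rational center does the rest.
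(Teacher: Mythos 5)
Your proof is correct and is built from the same core ingredients as the paper's: decompose $A$ into $\Gamma$-orbits with representatives $a_i$, form the stabilizers $\Gamma_i$ and section maps $f_i:\Gamma_i\to\Gamma$, and exploit the rank-one center to show that if no $f_i$ moves the central generator by a non-integer scalar, then faithfulness fails. Where you diverge is in the logical packaging. The paper argues directly: it extends \emph{every} $f_i$ (injective or not) to a Lie algebra endomorphism $\tilde f_i$ via Lemma~\ref{extension}, gets $\tilde f_i(z)=x_iz$ with $x_i\in\Q$, observes that if all $x_i\in\Z$ then $\langle z\rangle$ lies in the kernel of the action, and concludes that some $x_i\notin\Z$ — whence $(\Gamma_i,f_i)$ is a self-similar datum, and transitive self-similarity follows \emph{immediately} without any appeal to Theorem~\ref{Main1}. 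You instead route the argument through the criterion $\mathcal{S}(\fn)\neq\emptyset$, prove it by contradiction from $\lambda_f=\pm1$, and then invoke the much heavier machinery of Theorem~\ref{Main1} (and hence Theorem~\ref{Main0}) to convert the grading criterion back into a transitive action; this is valid but unnecessarily indirect, since you have the self-similar datum $(\Gamma_i,f_i)$ in hand the moment you know one $x_i$ is not an integer.

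Two smaller remarks. First, your claim that Lemma~\ref{extension} is ``unavailable'' when $f_i$ is not injective is a misreading: the lemma extends \emph{any} group morphism between multiplicative lattices to a Lie algebra morphism; injectivity is needed only for the ``moreover'' (that $\tilde f_i$ is an isomorphism). In the non-injective case the extension exists and satisfies $\tilde f_i(\fz)=0$, i.e.\ $x_i=0$, which is handled uniformly by the paper's argument. Your workaround via ``a nontrivial normal subgroup of a nilpotent group meets the center'' is correct, but the case split it was introduced to support is not needed. Second, your use of the multi-orbit generalization of the core--kernel identification (that a normal $K\subset\bigcap_i\Gamma_i$ with $f_i(K)\subset K$ lies in the kernel) is sound, and you are right that the paper has recorded only the transitive version; the paper's own proof of this lemma relies tacitly on the same fact when it says a subgroup ``invariant by all $\tilde f_i$'' would lie in the kernel, so you are not importing anything the paper does not also need.
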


\begin{proof} Assume that $\Gamma$ admits a faithful
self-similar action on some $A^\omega$, where   $A$ is a finite 
alphabet. Let $a_1,\dots,a_k$ be a set of representatives of 
$A/\Gamma$, where  $k$ is the number of $\Gamma$-orbits on $A$. For each $1\leq i\leq k$, let $\Gamma_i$ be
the stabilizer of $a_i$. For any $h\in\Gamma_i$, there is
$h_i\in\Gamma$ such that

\centerline{$h(a_i w)=a_i h_i(w)$,}

\noindent for all $w\in A^\omega$.  Since the action is faithfull
$h_i$ is uniquely determined and the map 
$f_i:\Gamma_i\to\Gamma, h\mapsto h_i$ 
 is a group morphism.

Let $\fn^\Q$ be the Malcev Lie algebra of $\Gamma$, and let $\fz$ be its center, and
let $z\neq 0$ be a generator of $\cap_i\,Z(\Gamma_i)$. 
 By Lemma \ref{extension}, the group morphism 
$f_i$ extends to a Lie  algebra morphism $\tilde f_i:\fn^\Q\to\fn^\Q$. Since 
$\fz=\Q\otimes Z(\Gamma)$ is one dimensional, it follows that
 either $\tilde f_i$ is an isomorphism or
 $\tilde f_i(\fz)=0$. In any case, 
we have $\tilde f_i(z)=x_i z$, for some
$x_i\in\Q$. However $\Z z$ is not invariant by all $\tilde f_i$, 
otherwise it would be in the kernel of the action.
It follows that
at least one $x_i$ is not an integer. 

For such an index $i$, the  $f_i$-core of  $\Gamma_i$ is trivial, and the virtual
morphism  $(\Gamma_i,f_i)$  is a self-similar datum for $\Gamma$.
Thus $\Gamma$ is transitive self-similar.
 
\end{proof}

\bigskip 
\noindent {\it 8.2  Small representations}

\noindent
Let $N$ be a CSC nilpotent  Lie group with Lie algebra
$\fn^\R$  and let $\Gamma$ be a cocompact lattice.

\begin{lemma}\label{small} If $\Gamma$ is transitive self-similar, then
 there exists a faithfull $\fn^\R$-module of dimension
$1+\dim \fn^\R$.
\end{lemma}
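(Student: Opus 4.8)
The plan is to write down an explicit faithful representation of dimension $1+\dim\fn^\R$, built from the adjoint representation together with a suitable derivation. The underlying elementary construction is the following: if $\delta$ is any derivation of $\fn^\R$, then on the space $V:=\fn^\R\oplus\R v$, with $v$ a fresh basis vector, the rule $\rho(x)\,y=[x,y]$ for $y\in\fn^\R$ and $\rho(x)\,v=\delta(x)$ defines a representation of $\fn^\R$. Indeed $\rho([x,y])=[\rho(x),\rho(y)]$ holds on $\fn^\R$ by the Jacobi identity, and it holds on $v$ exactly because $\delta$ is a derivation, since $\delta([x,y])=[x,\delta(y)]-[y,\delta(x)]$. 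Its kernel is the set of $x$ with $\ad x=0$ and $\delta(x)=0$, that is, the set of central $x\in\fz^\R$ killed by $\delta$. Hence $\rho$ is faithful as soon as $\delta$ restricts to an injective map on $\fz^\R$, so the whole lemma reduces to producing a derivation of $\fn^\R$ that is injective on the center.

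To find such a derivation I would invoke Theorem \ref{Main1}. By Malcev's Theorem $\Gamma$ is a multiplicative lattice in its Malcev Lie algebra $\fn^\Q$, with $\fn^\R=\R\otimes\fn^\Q$; since $\Gamma$ is transitive self-similar, Theorem \ref{Main1} provides a special grading $\fn^\C=\oplus_{n\in\Z}\fn^\C_n$ of $\fn^\C=\C\otimes\fn^\R$. Its grading operator $D$, acting as multiplication by $n$ on $\fn^\C_n$, is a semisimple derivation of $\fn^\C$ with kernel $\fn^\C_0$; because the grading is special, $\fn^\C_0\cap\fz^\C=0$, and therefore $D$ is injective on $\fz^\C$.

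The remaining step is to descend from $\C$ to $\R$, which I would do by a Zariski-density argument. The space $\Der(\fn^\R)$ is cut out inside $\End(\fn^\R)$ by linear equations with rational coefficients, so $\Der(\fn^\C)=\C\otimes_\R\Der(\fn^\R)$. The condition that an element $\delta\in\Der(\fn^\R)$ fail to be injective on $\fz^\R$ is the simultaneous vanishing of the maximal minors of the matrix of $\delta|_{\fz^\R}$, i.e. a finite system of polynomial equations on $\Der(\fn^\R)$. If these polynomials vanished identically on $\Der(\fn^\R)$ they would vanish on all of $\C\otimes_\R\Der(\fn^\R)=\Der(\fn^\C)$, contradicting the derivation $D$ just constructed; hence some $\delta\in\Der(\fn^\R)$ (in fact some rational derivation of $\fn^\Q$) is injective on the center. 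Feeding this $\delta$ into the construction of the first paragraph yields the required faithful $\fn^\R$-module of dimension $1+\dim\fn^\R$.

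I do not anticipate a genuine obstacle here: the two ingredients — the extension-by-a-derivation of the adjoint representation, and the density descent — are both routine, and the substance is entirely absorbed by the already-proved Theorem \ref{Main1}. The only point that demands a little care is the verification that the extended map is honestly a Lie-algebra representation and the identification of its kernel, which is a short direct check.
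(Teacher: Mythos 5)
Your proof is correct and follows essentially the same route as the paper: invoke Theorem~\ref{Main1} to get a special grading of $\fn^\C$, take the grading derivation (injective on $\fz^\C$ because the grading is special), descend to a real derivation injective on $\fz^\R$, and feed it into the semidirect-product representation $\R\partial\ltimes\fn^\R$ of dimension $1+\dim\fn^\R$. The only difference is that you spell out the Zariski-density argument for the $\C\to\R$ descent, which the paper leaves as an unexplained "it follows."
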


\begin{proof}
By hypothesis, $\Gamma$ is transitive self-similar.
By Theorem 2, $\fz^\C$ admits a special grading

\centerline{$\fn^\C=\oplus_{n\in\Z}\,\fn^{\C}_{n}$.}

\noindent Let $\delta: \fn^\C\to\fn^\C$ be the derivation 
defined by $\delta (x) = nx$ if $x\in\fn_n$. Since
$\delta\vert\fz^\C$ is injective, it follows
that there is some $\partial\in\Der\,\fn^\R$ such that
$\partial\vert\fz^\R$ is injective.

Set $\fm^\R=\R\partial\ltimes\fn^\R$. Relative to the adjoint action, 
$\fm^\R$ is a faithfull $\fz^\R$-module.
Therefore $\fm^\R$ is a  faithfull $\fn^\R$-module
with the prescribed dimension.
\end{proof}

\bigskip 
\noindent {\it 8.3  Filiform nilpotent Lie algebras}

\noindent Let $\fn$ be a nilpotent Lie algebra over $\Q$. Let
$C^n\fn$ be the decreasing central series, which is inductively
defined by
$C^1\fn=\fn$ and $C^{n+1}\fn=[\fn,C^n\fn]$. The nilpotent Lie algebra
$\fn$ is called {\it filiform} if $\dim C^1\fn/C^2\fn=2$ and
$\dim C^k\fn/C^{k+1}\fn\leq 1$ for any $k>1$. Set
$n=\dim \fn$. It follows from the definition that
$\dim C^k\fn/C^{k+1}\fn=1$ for any $0<k\leq n-1$ and
$C^k\fn=0$ for any $k\geq n$.

\begin{lemma}\label{fil} Let $\fn$ be a filiform nilpotent Lie algebra over $\Q$,
with $\dim \fn\geq 3$.
Then its center $\fz$ has dimension one.
\end{lemma}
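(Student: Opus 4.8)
The claim is that a filiform nilpotent Lie algebra $\fn$ over $\Q$ with $\dim\fn=n\ge 3$ has one-dimensional center. I would argue by a combination of the dimension bookkeeping already recorded in the paragraph preceding the statement and a direct analysis of where the center sits inside the descending central series. The key fact to exploit is that, for a filiform algebra, $\dim C^k\fn/C^{k+1}\fn=1$ for $1<k\le n-1$, $\dim C^1\fn/C^2\fn=2$, and $C^k\fn=0$ for $k\ge n$; in particular $\dim C^{n-1}\fn=1$, and $C^{n-1}\fn$ is a line.

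\textbf{Step 1: the last term of the central series is central.} First I would observe that $C^{n-1}\fn\subset\fz$. Indeed $[\fn,C^{n-1}\fn]=C^n\fn=0$ by the above, so the one-dimensional space $C^{n-1}\fn$ lies in the center. This already gives $\dim\fz\ge 1$, so it remains to prove $\dim\fz\le 1$, i.e. $\fz=C^{n-1}\fn$.

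\textbf{Step 2: the center cannot meet $\fn\setminus C^2\fn$.} Suppose for contradiction that there is $z\in\fz$ with $z\notin C^2\fn=[\fn,\fn]$. Then the image $\bar z$ of $z$ in $\fn/C^2\fn$ is nonzero, and since $\dim\fn/C^2\fn=2$ we may extend $\bar z$ to a basis $\{\bar z,\bar y\}$ of $\fn/C^2\fn$; lift to $z,y\in\fn$. Because $\fn$ is generated (as a Lie algebra) by any lift of a basis of $\fn/C^2\fn$, the whole of $C^2\fn$ is spanned by iterated brackets of $z$ and $y$. But $z$ is central, so every bracket involving $z$ vanishes, and the only surviving iterated brackets are those in $y$ alone, which are zero beyond length one. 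Hence $C^2\fn=0$, forcing $n=\dim\fn\le 2$, contradicting $n\ge 3$. Therefore $\fz\subset C^2\fn$.

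\textbf{Step 3: climbing the central series.} Now I would push the same idea upward: I claim $\fz\subset C^k\fn$ for all $k\le n-1$, proved by induction on $k$, the case $k=2$ being Step 2. Assume $\fz\subset C^k\fn$ with $k<n-1$ and suppose some $z\in\fz$ has nonzero image in $C^k\fn/C^{k+1}\fn$. Since that quotient is one-dimensional, $z$ together with a chosen generator $y$ of $\fn/C^2\fn$ (picked outside the line through the image of $C^2\fn$, so that $z,y$ generate $\fn$ modulo deeper terms) would have to generate $C^k\fn/C^{k+1}\fn$ by brackets; but again every bracket involving the central element $z$ dies, so $C^k\fn/C^{k+1}\fn$ would be generated purely by brackets in the remaining generators, which by the filiform one-dimensionality and a length count forces $C^{k+1}\fn=C^k\fn$, hence (by nilpotency) $C^k\fn=0$, contradicting $k\le n-1$. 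So $z\in C^{k+1}\fn$, completing the induction. Taking $k=n-1$ gives $\fz\subset C^{n-1}\fn$, which combined with Step 1 yields $\fz=C^{n-1}\fn$, a line.

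\textbf{Main obstacle.} The delicate point is Step 3: making precise the statement "a central element cannot contribute to a graded piece of $C^\bullet\fn$ because it kills all brackets it enters." One must argue carefully with the generation of $C^k\fn/C^{k+1}\fn$ by length-$k$ brackets in a two-element generating set, keeping track of the fact that modulo $C^{k+1}\fn$ a central $z$ genuinely acts as zero. A clean way to package this is: if $\fz$ is not contained in $C^2\fn$, choose a generator $x_1\in\fz\setminus C^2\fn$ and a complementary generator $x_2$; then $\fn=\langle x_1,x_2\rangle_{\mathrm{Lie}}$ and since $\ad x_1=0$, the subalgebra generated is just $\Q x_1\oplus\Q x_2$, so $\dim\fn\le 2$. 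The analogous argument at level $k$ replaces $x_1$ by a preimage of a central vector in $C^k\fn/C^{k+1}\fn$ and works modulo $C^{k+1}\fn$, where $\ad$ of that vector vanishes. I would write this out as a single induction rather than separating Steps 2 and 3, since the base case is just the level-$2$ instance.
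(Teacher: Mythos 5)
Your Steps 1 and 2 are correct and mirror implicit steps of the paper's argument; the issue is Step 3, whose execution as written does not work. You assert that a central $z$ with nonzero image in $C^k\fn/C^{k+1}\fn$ ($k\ge 2$) "together with a chosen generator $y$ of $\fn/C^2\fn$ \dots would have to generate $C^k\fn/C^{k+1}\fn$ by brackets," and that removing $z$ forces $C^{k+1}\fn=C^k\fn$. But for $k\ge 2$ the element $z$ lies inside $C^2\fn$ and is \emph{not} a generator of $\fn$; the graded piece $C^k\fn/C^{k+1}\fn$ is spanned by length-$k$ brackets in the genuine generators $x_1,x_2$, and $z$ being central imposes no constraint on those brackets. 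The argument you used in Step 2 really does rely on $z$ being a generator (i.e.\ $k=1$), and it does not transport to $k\ge 2$ in the way you describe. The sketch in your "Main obstacle" paragraph ("replaces $x_1$ by a preimage of a central vector in $C^k\fn/C^{k+1}\fn$") has the same defect: that preimage is not a generator, so there is nothing to "replace."

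The paper's proof handles all $k\ge 2$ in one stroke, by a different and cleaner computation. Write $C^k\fn=C^{k+1}\fn\oplus\Q z$ (possible because $\dim C^k\fn/C^{k+1}\fn=1$). Then
\[
C^{k+1}\fn=[\fn,C^k\fn]=[\fn,C^{k+1}\fn]+[\fn,z]=C^{k+2}\fn+0=C^{k+2}\fn,
\]
and nilpotency forces $C^{k+1}\fn=0$, which (by the filiform dimension count) forces $k=n-1$, so $\fz\subset C^{n-1}\fn$, a line. This is the step you need to replace your Step 3 with; your Step 2 is then exactly what justifies the omitted case $k=1$ (ruling out a central generator), which the paper leaves implicit. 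So: keep Steps 1 and 2, but replace the "generation by brackets" claim in Step 3 with the displayed identity $C^{k+1}\fn=C^{k+2}\fn$ and a nilpotency argument.
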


\begin{proof} Let $z\in \fn$ be nonzero. Let
 $k$ be the integer such that $z\in C^k\fn\setminus C^{k+1}\fn$.
Since $C^{k}\fn=C^{k+1}\fn\oplus\Q z$

\centerline{$C^{k+1}\fn=[\fn, C^k\fn]=[\fn, C^{k+1}\fn]
+[\fn,z]= C^{k+2}\fn$.}

 \noindent It follows that $C^{k+1}\fn=0$. Therefore
 $\fz$ lies in $C^k\fn$, which is a one dimensional ideal.
  \end{proof}

 \bigskip 
\noindent {\it 8.4  Benoist Theorem}
 
\begin{Benoistthm} There is a  nilpotent Lie algebra $\fn_B^\R$
  of dimension $11$ over $\R$, with the following properties
 
 (i) The Lie algebra $\fn_B^\R$ has no faithfull representations of
 dimension $12$,
 
 (ii) the Lie algebra $\fn_B^\R$ is defined over $\Q$, and
 
 (iii) the Lie algebra $\fn_B^\R$ is filiform.
 
\end{Benoistthm}
 
 The three assertions appear in
different places of \cite{Ben}. Indeed
 Assertion (i), which is explicitely stated in Theorem 2 of \cite{Ben},   hold for a one-parameter family of eleven dimensional
 Lie algebras, which are denoted $\fa_{-2,1,t}$ in section 2.1 of \cite {Ben}. These Lie algebras are filiform by Lemma 4.2.2 of \cite{Ben}. Moreover, when $t$ is rational, $\fa_{-2,1,t}$ is defined  over $\Q$. Therefore the Benoist Theorem holds for the Lie algebras 
 $\fn_B=\fa_{-2,1,t}$ where $t$ is any rational number.

 \bigskip 
\noindent {\it 8.5  A FGTF group which is not self-similar}

\noindent
Let $N_B$ the CSC nilpotent Lie group with
Lie algebra $\fn_B^\R$. Since $\fn_B^\R$ is defined over 
$\Q$, $N_B$ contains some cocompact lattice.

\begin{cor}\label{Nonaffine} Let $\Gamma$ be any cocompact   lattice in $N_B$. Then $\Gamma$ is not self-similar.
\end{cor}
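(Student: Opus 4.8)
The plan is to argue by contradiction, chaining together the three lemmas of this section with Benoist's Theorem. So suppose some cocompact lattice $\Gamma\subset N_B$ is self-similar; the goal is to manufacture a faithful representation of $\fn_B^\R$ of dimension $12$, contradicting assertion (i) of Benoist's Theorem. The heavy input (the non-existence of such a representation) is already quoted, so the argument is purely one of assembly.

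First I would pin down the center. Since $\fn_B^\R$ is filiform of dimension $11\geq 3$, Lemma \ref{fil} shows that its center is one-dimensional over $\R$; the same applies to the Malcev Lie algebra $\fn_B^\Q$ of $\Gamma$, whose center $\fz$ is therefore one-dimensional over $\Q$. Because $\Gamma$ generates $\fn_B^\Q$, the center $Z(\Gamma)$ of $\Gamma$ equals $\Gamma\cap\fz$, which is a lattice in the one-dimensional $\Q$-space $\fz$; hence $Z(\Gamma)\simeq\Z$.

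Now Lemma \ref{TransCond} applies directly: a self-similar FGTF nilpotent group with $Z(\Gamma)\simeq\Z$ is transitive self-similar. So $\Gamma$ is transitive self-similar, and Lemma \ref{small} then produces a faithful $\fn_B^\R$-module of dimension $1+\dim\fn_B^\R=12$. This contradicts assertion (i) of Benoist's Theorem. Therefore no cocompact lattice in $N_B$ is self-similar, which is the assertion of the corollary.

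I do not anticipate a genuine obstacle: all the substantial content lives in the cited statements, with the truly difficult part (Benoist's computation) already done in \cite{Ben}. The only steps needing a line of justification are the identification $Z(\Gamma)=\Gamma\cap\fz$ and the passage between the real, complex, and rational forms of the center, both immediate from Malcev's Theorem together with the fact that $\Gamma$ spans its Malcev Lie algebra.
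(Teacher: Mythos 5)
Your proof is correct and follows exactly the paper's own argument: invoke Lemma \ref{fil} (filiform, dimension $\geq 3$) to get a one-dimensional center, deduce $Z(\Gamma)\simeq\Z$, apply Lemma \ref{TransCond} to upgrade self-similar to transitive self-similar, then apply Lemma \ref{small} to produce a faithful $12$-dimensional $\fn_B^\R$-module contradicting Benoist's Theorem (i). The extra sentence you add justifying $Z(\Gamma)=\Gamma\cap\fz\simeq\Z$ is a small and welcome clarification, but the route is the same.
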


\begin{proof} Let's assume otherwise. 
By Benoist Theorem and Lemma \ref{fil},
the center of
$\fn_B^\R$ is one dimensional. 
Thus the center of $\Gamma$ has rank one,
and by Lemma \ref{TransCond}, $\Gamma$ is transitive self-similar. By Lemma \ref{small},
$\fn_B^{\R}$ admits a faithfull representation of dimension 12, which
contradicts Benoist Theorem. 

Therefore $\Gamma$ is not self-similar.
\end{proof}

 \bigskip 
\noindent {\it 8.6  On the Scheuneman-Milnor conjecture}

\noindent
A smooth manifold $M$ is called {\it affine} if it admits a 
torsion-free and flat connection. Scheuneman \cite{Sch} and Milnor \cite{Mi} asked the following question

\centerline{\it is any nilmanifold $M$ affine?}

\noindent The story of the Scheuneman-Milnor conjecture is  quite interesting.
For many years, there are been a succession of proofs
followed by refutations, but there was no doubts that the
conjecture should be ultimalely proved... 
until a counterexample has been found by Benoist \cite{Ben}.
Indeed it is an easy   corollary of his
previously mentionned Theorem. 

The  following question is a refinement of the previous conjecture

\centerline{\it  if $\pi_1(M)$ is densely self-similar,
is the nilmanifold $M$ affine?} 

\noindent A positive result in that direction is

\begin{cor}\label{Affine} Let $M$ be a nilmanifold. If
$\pi_1(M)$ is freely  self-similar,
then $M$ is affine complete.
\end{cor}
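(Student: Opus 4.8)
The plan is to deduce affine completeness from the existence of a very special grading of $\fn^\C$, which we obtain for free from Theorem \ref{Main2}. Recall that $M=N/\Gamma$ with $N$ a CSC nilpotent Lie group, $\fn^\R=\Lie\,N$, and $\fn^\C=\C\otimes\fn^\R$. Since $\pi_1(M)\simeq\Gamma$ is freely self-similar, Theorem \ref{Main2} gives a very special grading $\fn^\C=\oplus_{n\in\Z}\,\fn^\C_n$ with $\fn^\C_0=0$. As in the proof of Lemma \ref{small}, the grading produces a derivation $\delta\in\Der\,\fn^\C$ acting by the scalar $n$ on $\fn^\C_n$; because $\fn^\C_0=0$, this $\delta$ is \emph{invertible}. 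Averaging over $\Gal(\C/\R)$ (or simply taking real and imaginary parts of a grading, which by Lemma \ref{hypo} may be assumed defined over a number field, then extending scalars back to $\R$) one gets an invertible derivation $\partial\in\Der\,\fn^\R$ — more precisely one argues as in Lemma \ref{small} that the existence of an invertible derivation of $\fn^\C$ forces the existence of one of $\fn^\R$, since the space of derivations with nontrivial kernel is Zariski closed and defined over $\R$, so a real point exists once a complex point does.

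The key step is the classical fact, due to Scheuneman and developed further in the work on complete affine structures, that a CSC nilpotent Lie group $N$ whose Lie algebra admits an invertible derivation carries a complete left-invariant affine structure, and hence every nilmanifold $N/\Gamma$ is affine complete. I would implement this as follows. Given an invertible $\partial\in\Der\,\fn^\R$, set $\fm^\R=\R\partial\ltimes\fn^\R$ and consider the adjoint action of $\fm^\R$ on itself; restricting to $\fn^\R$ and projecting suitably, or more directly using the invertibility of $\partial$, one builds an injective Lie algebra homomorphism $\fn^\R\to\fgl(\fn^\R)\ltimes\fn^\R=\mathrm{aff}(\fn^\R)$ by $x\mapsto(\mathrm{ad}\,x\circ\partial^{-1},\,x)$, or a variant thereof; the point is that invertibility of $\partial$ makes the translation part a linear isomorphism of $\fn^\R$ onto itself, which is exactly the condition for the associated affine action of $N$ on $\fn^\R\simeq\R^{\dim N}$ to be simply transitive. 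A simply transitive affine action of $N$ descends to a complete affine structure on $N/\Gamma$ for any cocompact lattice $\Gamma$.

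The main obstacle, and the place where I would need to be careful, is the passage from the invertible \emph{derivation} to a genuine complete left-invariant affine structure on $N$: one must verify that the candidate bilinear product $x\cdot y := [\partial^{-1}x,\,y]$ (or the appropriate normalization thereof) is a left-symmetric (Koszul--Vinberg) product whose associated affine action is complete, i.e. that $\mathrm{id}+L_x$ is invertible for all $x$, equivalently that the trace form is well-behaved — for nilpotent $\fn$ completeness is automatic once left-symmetry holds, by a theorem of Segal, so the real content is left-symmetry. Left-symmetry of $x\cdot y=[\partial^{-1}x,y]$ follows from the derivation identity $\partial[u,v]=[\partial u,v]+[u,\partial v]$ after substituting $u=\partial^{-1}x$, $v=\partial^{-1}y$ and using the Jacobi identity; this is a short computation that I would carry out explicitly. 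Once left-symmetry is established, completeness of the affine structure on $N$ is automatic (Segal/Helmstetter), it is left-invariant by construction, and therefore $M=N/\Gamma$ inherits a complete affine structure, so $M$ is affine complete. Finally, since $\Gamma\hookrightarrow\fn^\R$ as a multiplicative lattice and the affine action of $N$ is simply transitive, the restriction to $\Gamma$ is properly discontinuous and free, confirming that the quotient is a complete affine manifold with fundamental group $\Gamma$.
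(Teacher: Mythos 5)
Your proposal is correct and follows essentially the same route as the paper: Theorem~\ref{Main2} gives a very special grading of $\fn^\C$, hence an injective derivation $\delta$ of $\fn^\R$ (a Zariski-density argument over $\R$, as you say, or the paper's ``generic derivation is injective''), and then the adjoint action of $N$ inside $\fm^\R=\R\delta\ltimes\fn^\R$ gives a simply transitive affine action, so $M$ is affine complete — the paper phrases this last step simply as: $N$ is equivariantly diffeomorphic to the affine subspace $\delta+\fn^\R\subset\fm^\R$. One caution on your explicit formulas: as written, $x\mapsto(\mathrm{ad}\,x\circ\partial^{-1},\,x)$ is not a Lie algebra homomorphism into $\mathrm{aff}(\fn^\R)$, and $x\cdot y=[\partial^{-1}x,y]$, while indeed left-symmetric, has commutator $\partial[\partial^{-1}x,\partial^{-1}y]\neq[x,y]$, so it is not compatible with the given bracket; the correct variants are $x\mapsto(\mathrm{ad}\,x,\,-\delta x)$ (whose translation part $-\delta$ is invertible, giving simple transitivity directly) and $x\cdot y=\partial^{-1}[x,\partial y]$.
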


\begin{proof} Set $M=N/\Gamma$, where $N$ is a CSC nilpotent
Lie group and $\Gamma$ is a cocompact lattice. Let
$\fn^\R$ be the Lie algebra of $N$. By Theorem \ref{Main2},
$\C\otimes\fn^\R$
admits a very special grading, what implies that a generic
derivation is injective. Therefore there is a derivation
$\delta$ of $\fn^\R$ which is injective. Set
$\fm^\R=\R\delta\ltimes \fn^R$. Then $N$ is equivariantly
diffeomorphic to the affine space
$\delta+\fn^\R\subset \fm^\R$. Therefore $M$ is affine complete.

\end{proof}

 \section{Absolute Complexities}
 
 For the whole chapter,  $N$ will be a CSC nilpotent Lie groups, with Lie algebra $\fn^\R$. Let's assumethat that $N$ contains some  cocompact lattices.  
 
 Under the condition of  Theorem \ref{Main1} or \ref{Main2}, 
 any cocompact lattice  $\Gamma$ in $N$ admits a transitive
 or free self-similar action on some  $A^\omega$. In this section, we try to determine the minimal degree of these actions.
 
\bigskip
 \noindent{\it 9.1 Three type of absolute complexities}
 
 \noindent The {\it complexity} of a  cocompact lattice $\Gamma\subset N$, denoted by $\cp\,\Gamma$, is the smallest
 degree of a faithfull transitive self-similar action of $\Gamma$
 on some $A^\omega$, with the convention that $\cp\,\Gamma=\infty$ if $\Gamma$ is not transitive self-similar. Similarly,
 the {\it free complexity} of  $\Gamma$, 
 denoted by $\fcp\,\Gamma$, is the smallest
 degree of a free self-similar action of $\Gamma$.
 Two cocompact lattices are called {\it commensurable} if
 they share a commoun subgroup of finite index.
 The complexity and the free complexity of a commensurable class 
 $\xi$ are the integers
 
 \centerline{$\cp\,\xi=\Min_{\Gamma\in\xi}\,\cp\,\Gamma$, and}
 
 \centerline
 {$\fcp\,\xi=\Min_{\Gamma\in\xi}\,\fcp\,\Gamma$.}
 
 Then, the complexity of the nilpotent group $N$ is 
 
 \centerline{$\cp\,N=\Max_{\xi}\,\cp\,\xi$,}
  
  \noindent where $\xi$ runs over all commensurable classes in $N$.
  In what follows, we will provide a formula for
the complexity of commensurable classes. The question

\centerline{\it under which condition $\cp N<\infty$?}

\noindent is not solved, but it is a deep question. In  chapter 10, 
a class of CSC nilpotent Lie groups of infinite complexity is investigated.

\bigskip
\noindent {\it 9.2 Theorem \ref{Main3}}

\noindent Let $\xi$ be a commensurable class of cocompact lattices
in $N$, and let $\Gamma\in\xi$. The Malcev Lie
algebra $\Gamma$ is a $\Q$-form of the Lie algebra
$\fn^\R$. Since it depends only on $\xi$, it will
be denoted by $\fn(\xi)$.

\begin{thm}\label{Main3} We have

\centerline{
$\cp\,\xi=\Min_{h\in{\cal S}(\fn(\xi))}\,\he(h)$, and}

\centerline{
$\fcp\,\xi=\Min_{h\in{\cal V}(\fn(\xi))}\,\he(h)$.}

\end{thm}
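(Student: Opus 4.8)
The plan is to reduce the statement to the local results about multiplicative lattices established in Chapters 5 and 6, together with Theorem \ref{Main0} from Chapter 2. Fix a commensurable class $\xi$ with $\Q$-form $\fn=\fn(\xi)$, and write $\fz$ for its center. By Malcev's Theorem every $\Gamma\in\xi$ is a multiplicative lattice in $\fn$, and conversely every multiplicative lattice in $\fn$ belongs to $\xi$; moreover by Lemma \ref{corr1}, Lemma \ref{corr2} and Lemma \ref{ssdatum} the faithful transitive self-similar actions of $\Gamma$ with dense orbits correspond exactly to good self-similar data $(\Gamma',f)$ with $\tilde f\in{\cal S}(\fn)$, the degree being $[\Gamma:\Gamma']$. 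So the first step is to rewrite
\[
\cp\,\xi=\Min\{[\Gamma:\Gamma']\mid \Gamma\in\xi,\ (\Gamma',f)\ \text{good},\ \tilde f\in{\cal S}(\fn)\},
\]
and similarly for $\fcp\,\xi$ with ${\cal V}(\fn)$ in place of ${\cal S}(\fn)$ and Assertion (ii) of Lemma \ref{ssdatum}. By Lemma \ref{Chevalley} we may pass to the semisimple part of $\tilde f$ without changing the height, so it suffices to range over semisimple $h$.

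The second step is the lower bound $\cp\,\xi\geq\Min_{h\in{\cal S}(\fn)}\he(h)$. Given any $\Gamma\in\xi$ and any self-similar datum $(\Gamma',f)$ of degree $[\Gamma:\Gamma']$ with $h:=\tilde f_s\in{\cal S}(\fn)$, Lemma \ref{multineq} gives $[\Gamma:\Gamma\cap h^{-1}(\Gamma)]=\cp_h(\Gamma)\geq\he(h)$; since the actual degree is $[\Gamma:\Gamma']\geq[\Gamma:\Gamma\cap h^{-1}(\Gamma)]$ (as $\Gamma'\subset\Gamma\cap f^{-1}(\Gamma)\subset\Gamma\cap h^{-1}(\Gamma)$ up to the usual Chevalley argument — this is the point that needs a little care), we get $\cp\,\xi\geq\he(h)\geq\Min_{h\in{\cal S}(\fn)}\he(h)$. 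The free case is identical with ${\cal V}(\fn)$.

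The third and main step is the matching upper bound: given any semisimple $h\in{\cal S}(\fn)$, produce $\Gamma\in\xi$ and a good self-similar datum realizing degree exactly $\he(h)$. Here I would mimic the proof of Theorem \ref{Main1}: replace $h$ by a suitable power lying in the identity component ${\bf H}$ of the Zariski closure of $\langle h\rangle$, apply Theorem \ref{Main0} to find $h'\in{\bf H}(\Q)$ with $\chi(h')$ never an algebraic integer and $\chi(h')\equiv1\bmod n{\cal O}_\pi$ for the relevant $n$ and primes $\pi$, and then — this is the new ingredient beyond Theorem \ref{Main1} — choose $\Gamma$ to be (or to contain, by Lemma \ref{mingood}) an ${\cal O}(h')$-lattice that is minimal relative to $h'$, using Lemma \ref{refined} to guarantee $\cp_{h'}(\Gamma)=\he(h')$ and Lemma \ref{mingood} to make the datum good. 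One subtlety: replacing $h$ by $h'$ changes the height, so one must check $\he(h')\leq\he(h)$, or rather arrange the construction so that the infimum over $h\in{\cal S}(\fn)$ is not disturbed — the cleanest route is to observe that $h'$ itself lies in ${\cal S}(\fn)$, so $\Min_{h}\he(h)\leq\he(h')=\cp_{h'}(\Gamma)=$ the degree achieved, while conversely the degree achieved is $\geq\cp\,\xi\geq\Min_h\he(h)$ by Step 2; combining, all three quantities coincide, and taking the minimum over all admissible $h$ on the outside closes the loop. The main obstacle is precisely this bookkeeping between the abstract $h$ witnessing ${\cal S}(\fn)\neq\emptyset$ and the arithmetically well-chosen $h'$ coming from Theorem \ref{Main0}; once that is handled, Lemmas \ref{refined}, \ref{mingood} and \ref{ssdatum} do the rest, and the free complexity statement follows verbatim with $\fz$ replaced by $\fn$ throughout.
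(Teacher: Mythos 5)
Your lower bound (Step 2) matches the paper: $\cp_f\Gamma\ge\he(f)$ by Lemma \ref{multineq}, hence $\cp\,\xi\ge\Min_{h\in{\cal S}(\fn)}\he(h)$, and likewise for $\fcp$. But Step 3 has a genuine gap. You propose to mimic the proof of Theorem \ref{Main1}: pass to the torus ${\bf H}$, invoke Theorem \ref{Main0} to produce $h'$, and then use Lemma \ref{refined} to get minimality. The problem — which you flag but do not actually resolve — is that $h'$ has no a priori relation to $h$ in height; Theorem \ref{Main0} gives no control whatsoever over $\he(h')$. Your proposed patch does not close the loop: from $m:=\Min_h\he(h)\le\he(h')$, $\he(h')=\cp_{h'}(\Gamma)=$ the achieved degree, and (degree) $\ge\cp\,\xi\ge m$, you can only conclude $m\le\cp\,\xi\le\he(h')$. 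Nothing forces $\he(h')\le m$, so the claim that "all three quantities coincide" is unjustified.

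The paper avoids Theorem \ref{Main0} and Lemma \ref{refined} altogether here. The correct key is Lemma \ref{mingood}, which you mention only as a subsidiary tool "to make the datum good," when in fact it does the entire job of the upper bound on its own. For \emph{any} semisimple $h\in\Aut\,\fn$ and any multiplicative lattice $\Gamma$, Lemma \ref{mingood} produces a finite-index $\tilde\Gamma\subset\Gamma$ (so $\tilde\Gamma\in\xi$) which is minimal relative to $h$, i.e. $\cp_h\tilde\Gamma=\he(h)$ — with no congruence condition on $\Spec h$, so no need to replace $h$. Since minimality is \emph{equivalent} to goodness by that same lemma, and $h\in{\cal S}(\fn)$ makes $(\tilde\Gamma\cap h^{-1}\tilde\Gamma,h)$ a self-similar datum by Lemma \ref{ssdatum}, Lemma \ref{corr2} yields a faithful dense action of $\tilde\Gamma$ of degree exactly $\he(h)$. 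Taking $h$ of minimal height in ${\cal S}(\fn)$ gives $\cp\,\xi\le\Min_h\he(h)$ directly. The arithmetic machinery (Theorem \ref{Main0}, Lemma \ref{refined}) is needed in Theorem \ref{Main1} precisely because there one must work with a \emph{fixed} $\Gamma$; in Theorem \ref{Main3} the freedom to shrink $\Gamma$ within its commensurable class is what makes Lemma \ref{mingood} sufficient, and using Theorem \ref{Main0} here only introduces the height-control problem you could not surmount. The same remark applies verbatim to $\fcp$, once one notes (as the paper does via Theorem \ref{Main2}) that a free action of minimal degree is automatically transitive.
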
 

\begin{proof} Let $h\in{\cal S}(\fn(\xi))$ be an isomorphism
of minimal height. In order to show that
$\cp\,\xi=\he(h)$, we can  assume that $h$ is semi-simple, 
by Lemma \ref{Chevalley}.

Further, let $\Gamma$ be any cocompact lattice in $\xi$.
By Lemma  \ref{ssdatum}, we have 
$\cp\,\Gamma=\Min_{f\in {\cal S}(\fn(\xi))}\,\cp_f\,\Gamma$. 
By lemma \ref{multineq}, we have $\cp_f\,\Gamma\geq\he(f)$, therefore
we have $\cp\,\Gamma\geq \he(h)$. In particular
$\cp\,\xi \geq \he(h)$.

By Lemma \ref{mingood}, $\Gamma$ contains a finite index 
subgroup $\tilde\Gamma$
which is minimal relative to $h$. 
Since $\cp_h\,\tilde\Gamma=\he(h)$, it follows that
$\cp\,\xi\leq\he(h)$.

Therefore $\cp\,\xi=\he(h)$ and the first assertion  is proved.

For the second assertion, let's notice that an free action
of minimal degree is automatically transitive, see the proof
of Theorem \ref{Main2}. Then the rest of the proof is strictly
identical to the previous proof.
\end{proof}

\bigskip\noindent 
 {\it 9.3 Classification of lattices in a CSC nilpotent Lie groups}

\noindent  Obviously  Malcev's Theorem 
implies the following

\begin{Malcevcor} The map $\xi\mapsto \fn(\xi)$
establishes a bijection between the commensurable classes of 
lattices and the $\Q$-forms of the Lie algebra $\fn^{\R}$.
\end{Malcevcor} 

For the next chapter, it is interesting to translate this
into the framework of non-abelian Galois cohomology.
Somehow, it is more concrete, since the non-abelian Galois cohomology
classifies $\Q$-forms of classical objects. 

Set ${\bf G}=\Aut \fn^\C$, let ${\bf U}$ be its unipotent 
radical and set $\overline{\bf G}={\bf G}/{\bf U}$. 
From now on, fix once for all a commensurable class $\xi_0$ 
of cocompact lattices. Then
$\fn(\xi_0)$ is a $\Q$-form of $\fn^{\C}$, what provides
a $\Q$-form of the algebraic groups ${\bf G}$ and $\overline{\bf G}$. It induces
an action of $\Gal(\Q)$ over 
$\overline{\bf G}(\overline\Q)$. 

Set ${\overline \Q}_{re}=\overline \Q\cap \R$ and let

\centerline{$\overline{\pi}:
H^1(\Gal(\Q), {\bf \overline G}(\overline{\Q}))
\rightarrow 
H^1(\Gal({\overline \Q}_{re}), {\bf \overline G}(\overline{\Q}))$}

\noindent be the natural map. Recall that these 
two non-abelian cohomologies
are pointed sets, where the distinguished point $*$ comes from
the given $\Q$-form and the induced 
${\overline \Q}_{re}$-form. Denote by $\Ker\,\overline{\pi}$ the kernel
of $\overline{\pi}$, i.e. the fiber ${\overline\pi}^{-1}(*)$ of the distinguished point.

Let ${\cal L}(N)$ be the set of all commensurable classes classes of lattices of $N$, up to conjugacy.

\begin{cor}\label{classification}
There is a natural identification

\centerline{${\cal L}(N)\simeq\, \Ker\,\overline{\pi}$.}
\end{cor}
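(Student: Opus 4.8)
The plan is to derive the identification from Malcev's Corollary by spelling out, in cohomological terms, what a $\Q$-form of $\fn^\R$ is; there are three moves. Fix the base class $\xi_0$ and put $\fn=\fn(\xi_0)$, so that ${\bf G}$, ${\bf U}$ and $\overline{\bf G}$ carry the $\Q$-structures appearing in the statement and the base point of $H^1(\Gal(\Q),\overline{\bf G}(\overline\Q))$ is the one attached to $\xi_0$. First I would rewrite the left-hand side: by Malcev's Corollary the assignment $\xi\mapsto\fn(\xi)$ is a bijection between commensurable classes of cocompact lattices of $N$ and $\Q$-forms of $\fn^\R$, realized as $\Q$-subalgebras $\mathfrak m\subset\fn^\R$ with $\R\otimes_\Q\mathfrak m=\fn^\R$, and it is equivariant for the action of $\Aut(N)=\Aut\,\fn^\R$ on such subalgebras. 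Passing to orbits identifies ${\cal L}(N)$ with the set of $\Q$-forms of $\fn^\R$ up to $\Q$-algebra isomorphism: a $\Q$-isomorphism between two subalgebra-forms extends, upon extending scalars to $\R$, to an element of $\Aut\,\fn^\R$ sending one onto the other, and conversely; and every abstract $\Q$-form of $\fn^\R$ occurs as such a subalgebra.

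Next I would bring in Galois descent. Every $\Q$-form of $\fn^\R$ is in particular a $\Q$-form of $\fn^\C$, and for finite-dimensional $\Q$-algebras ``isomorphic over $\C$'' is the same as ``isomorphic over $\overline\Q$'' (the isomorphism scheme is an affine $\overline\Q$-variety, and over an algebraically closed field a nonempty variety has a point); so the standard descent formalism identifies the set of $\Q$-forms of $\fn^\C$ up to $\Q$-isomorphism with $H^1(\Gal(\Q),{\bf G}(\overline\Q))$ (recall ${\bf G}(\overline\Q)=\Aut\,\fn^{\overline\Q}$). Under this identification, the forms coming from Step 1 — those $\mathfrak m$ with $\mathfrak m^\R\simeq\fn^\R$ — are exactly the kernel of the restriction map $H^1(\Gal(\Q),{\bf G}(\overline\Q))\to H^1(\Gal(\overline\Q_{re}),{\bf G}(\overline\Q))$. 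One inclusion is just base change along $\overline\Q_{re}\hookrightarrow\R$. For the other, if $\mathfrak m^\R\simeq\fn^\R$, then the affine $\overline\Q_{re}$-scheme $\underline{\mathrm{Isom}}(\mathfrak m^{\overline\Q_{re}},\fn^{\overline\Q_{re}})$ has a point over $\R$, hence a point over $\overline\Q_{re}$, so $\mathfrak m$ restricts to the base point over $\overline\Q_{re}$. Descending a real Lie algebra isomorphism to the real algebraic numbers is precisely what forces $\overline\Q_{re}$ into the statement, and it is the one step that is not formal: I would justify it by Tarski's transfer principle — $\overline\Q_{re}$ is an elementary substructure of $\R$ as a real closed field, so the first-order assertion that an explicit finite system of polynomial equations with coefficients in $\overline\Q_{re}$ has a solution, being true over $\R$, holds over $\overline\Q_{re}$. (Alternatively one cites that $H^1$ of a linear algebraic group does not change under extension of real closed fields.)

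Finally I would replace ${\bf G}$ by $\overline{\bf G}={\bf G}/{\bf U}$. From the exact sequence $1\to{\bf U}\to{\bf G}\to\overline{\bf G}\to 1$ together with the vanishing $H^i(\Gal(k),\overline\Q)=0$ for $i\ge 1$ and $k\in\{\Q,\overline\Q_{re}\}$ (normal basis theorem), and using that in characteristic zero every twist of the split unipotent group ${\bf U}$ is again split unipotent — so has trivial $H^1$, and no $H^2$-obstruction by induction along a central series with additive-group quotients — one gets bijections $H^1(\Gal(k),{\bf G}(\overline\Q))\simeq H^1(\Gal(k),\overline{\bf G}(\overline\Q))$ for $k=\Q$ and $k=\overline\Q_{re}$, compatible with restriction. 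Hence $\Ker\,\overline\pi$ is identified with the kernel of the restriction map for ${\bf G}$, and assembling the three steps yields the natural bijection ${\cal L}(N)\simeq\Ker\,\overline\pi$. In this scheme Steps 1 and 3 are bookkeeping (modulo the standard vanishing theorems for Galois cohomology of unipotent groups in characteristic zero), and the real obstacle is the passage from $\R$ to $\overline\Q_{re}$ in Step 2.
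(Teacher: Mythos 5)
Your proposal is correct and follows essentially the same route as the paper: Malcev's Corollary to identify $\mathcal{L}(N)$ with isomorphism classes of $\Q$-forms of $\fn^\R$, the Lefschetz and Tarski transfer principles to descend from $\C,\R$ to $\overline\Q,\overline\Q_{re}$, Galois descent (Serre) to pass to $H^1$ of $\mathbf{G}$, and vanishing of cohomology of unipotent groups in characteristic zero to replace $\mathbf{G}$ by $\overline{\mathbf{G}}$. The paper organizes the final comparison via an explicit commutative diagram of pointed sets and observes ${\cal L}(N)=\Ker\,\theta$ tautologically, while you spell out the passage from $\R$-forms to $\overline\Q_{re}$-forms via the Isom scheme and elementary equivalence of real closed fields, but these are the same argument.
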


\begin{proof} 
For any field $K\subset \C$,  set
$\fn^K=K\otimes\fn(\xi_0)$. For any two fields $K\subset L\subset \C$,
let  ${\cal F}(L/K)$ be the set
of $K$-forms of $\fn^L$, up to conjugacy. Then 
${\cal F}(L/K)$ is a pointed set, whose distinguished point is the
$K$-form $\fn^K$.

By the Lefschetz principle, the 
$\Q$-forms of $\fn^\C$ (up to conjugacy) are in 
bijection with the $\Q$-forms of $\fn^{\overline{\Q}}$.
Similarly by the Tarski-Seidenberg principle the real forms (up to conjugacy) of 
$\fn^\C$ are in bijection with the
${\overline \Q}_{re}$-forms of 
$\fn^{\overline \Q}$. So
we have

\centerline{${\cal F}(\C/\Q)\simeq {\cal F}(\overline\Q/\Q)$
and ${\cal F}(\C/\R)\simeq {\cal F}(\overline\Q/{\overline \Q}_{re})$}

Since a Lie algebra is a vector space 
endowed with a tensor (its Lie bracket), it follows from
\cite{Ser}, III-2, Proposition 1 that

\centerline{ ${\cal F}(\overline\Q/\Q)=
H^1(\Gal(\Q), {\bf  G}(\overline{\Q}))$,  and}

\centerline{
${\cal F}(\overline\Q/{\overline \Q}_{re})=
H^1(\Gal({\overline \Q}_{re}), {\bf  G}(\overline{\Q}))$.}

\noindent Moreover since ${\bf U}$ is unipotent, we have

\centerline{$H^1(\Gal(\Q), {\bf  G}(\overline{\Q}))
\simeq H^1(\Gal(\Q), {\bf \overline G}(\overline{\Q}))$,  
and}

\centerline{
$H^1(\Gal(\Q_{re}), {\bf  G}(\overline{\Q}))
\simeq H^1(\Gal(\Q_{re}), {\bf \overline G}(\overline{\Q}))$.}

There is a commutative diagram of pointed sets

 \begin{tabular}{c c c}
 
 ${\cal F}(\C/\Q)$ & 
 $\overset{\theta}{\longrightarrow}$
 & ${\cal F}(\C/\R)$\\
 
 $\downarrow$ &
 
 & $\downarrow$ \\
 
 ${\cal F}(\overline\Q/\Q)$ &
 $\overset{\theta'}{\longrightarrow}$
& ${\cal F}(\overline\Q/{\overline \Q}_{re})$ \\

$\downarrow$ && $\downarrow$ \\ 
 
 $H^1(\Gal(\Q), {\bf  G}(\overline{\Q}))$ &
 $\overset{\pi}{\longrightarrow}$ 
& $ H^1(\Gal({\overline \Q}_{re}), {\bf  G}(\overline{\Q}))$\\
 
 $\downarrow$ && $\downarrow$ \\
 
 $ H^1(\Gal(\Q), {\bf\overline  G}(\overline{\Q}))$ &
 $\overset{\overline\pi}{\longrightarrow}$ &
 $ H^1(\Gal({\overline \Q}_{re}), {\bf\overline  G}(\overline{\Q}))$\\

 \end{tabular}
 
\noindent where $\theta$ is the map $\R\otimes_{\Q}\,-$ ,
 $\theta'$ is the map ${\overline\Q}_{re}\otimes_{\Q}\,-$,
 $\pi$ and $\overline\pi$ are restrictions maps.
 It is tautological that
${\cal L}(N)=\Ker\, \theta$. Since all vertical maps are 
bijective, it follows that
${\cal L}(N)$ is isomorphic to 
$\Ker\,\overline{\pi}$.

\end{proof}

\section{ Some Nilpotent Lie groups of infinite complexity.}
 
This chapter is devoted to the analysis to a
class of  CSC nilpotent Lie groups ${\cal N}$,
for which the classification of commensurable classes
and the computation of their complexity are very explicitely
connected with the arithmetic of complex quadratic fields.

For $K=\R$ or $\C$, let $O(2,K)$ be 
the group of linear automorphisms
of $\R^2$ preserving the quadratic form $x^2+y^2$.
Let ${\cal L}$ be the class of nilpotent Lie algebras
$\fn^R$  over $\R$ satisfying the following properties

(i) $\fn^R$  has a $\Q$-form

(ii) $\fn^R/[\fn^R,\fn^R]\simeq \R^2$ has dimension two

(iii) the Lie algebra $\fn^\C:=\C\otimes\fn^\R$ has 
a special grading

(iv) for $K=\R$ or $\C$, the image of $\Aut\,\fn^K$ in 
$GL(\fn^K/[\fn^K,\fn^K])$ is $O(2,K)$.

\noindent Let  be the class of CSC
nilpotent Lie groups $N$ whose  Lie algebra $\fn^\R$ is in
${\cal L}$.

It should be noted that the class ${\cal N}$ is {\it not} empty.
There is one Lie group $N_{112}\in{\cal N}$  of dimension $112$,
see \cite{Mat}. Indeed \cite{Mat} contains a general method
to find nilpotent Lie algebras with a prescribed group
of automorphisms, modulo its unipotent radical.
For the group $O(2,\R)$, $N_{112}$ is the Lie group of minimal dimension obtained with this method. However it is difficult to provide more details, without going to very long explanations. 

From now on, $N$ will be any Lie group in class ${\cal N}$,
$\xi_0$ will be one commensurable class of lattices in $N$
and $\fn:=\fn(\xi_0)$ will be the corresponding  corresponding $\Q$ form of $\fn^R$. As before, set $\fn^K=K\otimes\fn$ for any
field $K\subset\C$. Let ${\bf G}={\bf Aut}\,\fn$ the algebraic automorphism group of $\fn$, let ${\bf U}$ be its unipotent radical
and set ${\bf \overline G}={\bf G}/{\bf U}$.
By hypothesis,  ${\bf \overline G}$ is the algebraic group
$O(2)$.

\bigskip
\noindent
{\it 10.1 The $\Z$-grading of $\fn^\C$}

\noindent
Since ${\bf \overline G}(\C)=O(2,\C)$, a maximal torus 
${\bf H}$ of ${\bf G}(\C)$ has dimension $1$. Therefore $\fn^\C$ has a  $\Z$-grading 

\centerline{$\fn^\C=\oplus_{k\in\Z}\, \fn^\C_k$,}

\noindent satisfying the following properties

(i) the grading is essentially unique, namely any other
grading is a multiple of the given grading,

(ii) $\dim \fn^\C_k=\dim \fn^\C_{-k}$ for any $k$.
In particular $\fn^\C$ does {\it not} admit a (non-trivial)
non-negative grading, and

(iii) the grading is {\it not} defined over $\R$.

\noindent Indeed since  ${\bf \overline G}(\C)=O(2,\C)$, 
the normalizer  ${\bf K}(\C)$  of ${\bf H}(\C)$ has two connected components, and any $\sigma\in {\bf K}(\C)\setminus {\bf K}(\C)^0$
exchanges $\fn^\C_k$ and $\fn^\C_{-k}$, what shows Assertion (ii).
Since ${\bf \overline G}(\R)=O(2,\R)$, no torus of 
${\bf G}(\R)$ is split, what implies Assertion (iii).

Moreover, the grading is not very special, so $\fcp(\xi)=\infty$
for any commensurable class $\xi$.
For the forthcoming  computation of $\cp(\xi)$, the following quantity will be involved

\centerline{ $e(N)=\sum_{k> 0} \,k\dim\fn^\C_k$.}

\noindent For example, for the Lie group $N_{112}$ of
\cite{Mat}, we have  $e(N_{112})=126$.

\bigskip
\noindent
{\it 10.2 Classification of commensurable lattices in $N$}

\begin{lemma} Let $N\in{\cal N}$. Up to conjugacy, there is a bijection between

(i) the commensurable class of cocompact lattices in $N$, and

(ii) the positive definite quadratic form on $\Q^2$.
\end{lemma}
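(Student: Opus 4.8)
The plan is to deduce this from Corollary~\ref{classification} together with the identification of $\overline{\bf G}={\bf G}/{\bf U}$ as an orthogonal group and the standard dictionary between the Galois cohomology of $O(q)$ and quadratic forms. By Corollary~\ref{classification}, the set of commensurable classes of cocompact lattices of $N$, taken up to conjugacy, is in natural bijection with $\Ker\,\overline\pi$, the fibre over the distinguished point of the restriction map
\[
\overline\pi\colon H^1(\Gal(\Q),\overline{\bf G}(\overline\Q))\longrightarrow H^1(\Gal({\overline \Q}_{re}),\overline{\bf G}(\overline\Q)),
\]
all base points and Galois actions being those induced by the fixed $\Q$-form $\fn=\fn(\xi_0)$.

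First I would pin down $\overline{\bf G}$. By hypothesis (iv) in the definition of ${\cal L}$ one has $\overline{\bf G}(\C)=O(2,\C)$ and $\overline{\bf G}(\R)=O(2,\R)$. The line of $\overline{\bf G}$-invariant binary forms on $\C^2$ is $\Gal(\Q)$-stable, hence spanned by a binary quadratic form $q_0$ over $\Q$, and since the similitude character of $\overline{\bf G}$ is a $\Q$-morphism to $\G_m$ vanishing on the Zariski-dense subgroup $O(2,\R)$ it is trivial; thus $\overline{\bf G}=O(q_0)$. The compactness of $O(2,\R)$ forces $q_0$ to be definite, and after replacing $q_0$ by $-q_0$ if needed — which does not change $O(q_0)$ — and rescaling, we may take $q_0$ positive definite with $q_0\otimes\R=x^2+y^2$; this $q_0$ is the form attached to the base class $\xi_0$. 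Now $O(q_0)$ is the $\Q$-automorphism group of the quadratic space $(\Q^2,q_0)$, so by the torsor/form dictionary already used for Corollary~\ref{classification} (see \cite{Ser}), for every field $k$ of characteristic zero $H^1(\Gal(k),O(q_0)(\overline k))$ is the set of isometry classes of binary quadratic forms over $k$ — all of them twists of $q_0$ since any two binary forms become isometric over $\overline\Q$ — with distinguished point $q_0$.

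Taking $k=\Q$ and $k={\overline \Q}_{re}$, the map $\overline\pi$ becomes extension of scalars along $\Q\hookrightarrow{\overline \Q}_{re}$ on isometry classes of binary forms. Since ${\overline \Q}_{re}$ is real closed, Sylvester's law of inertia identifies binary forms over ${\overline \Q}_{re}$ with binary forms over $\R$, carrying $q_0$ to $x^2+y^2$; hence $\Ker\,\overline\pi$ is exactly the set of isometry classes of binary quadratic forms over $\Q$ that are positive definite over $\R$, i.e. the positive definite binary quadratic forms on $\Q^2$. Composing with Corollary~\ref{classification} gives the asserted bijection, and I write $\xi(q)$ for the commensurable class attached to a form $q$. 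The step needing the most care is the bookkeeping of base points — that the distinguished point of $H^1$ over $\R$ is the class of the positive definite form $x^2+y^2$, which is precisely what hypothesis (iv) supplies — together with the elementary verification that $\overline{\bf G}$ is a genuine orthogonal group rather than a similitude group; neither is a serious obstacle.
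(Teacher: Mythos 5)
Your proof is correct and follows essentially the same route as the paper: invoke Corollary~\ref{classification} to reduce to computing $\Ker\,\overline\pi$, then use the torsor/form dictionary for the orthogonal group to identify this kernel with the positive definite binary quadratic forms over $\Q$. The paper states flatly that $\overline{\bf G}$ ``is the algebraic group $O(2)$'' and picks $q_0$ without comment; your extra paragraph verifying that $\overline{\bf G}$ is genuinely an orthogonal group (not a similitude group) and that its invariant form can be taken positive definite over $\Q$ fills in a step the paper glosses over, but does not change the argument.
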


\begin{proof} Let $q_0$ be a given definite quadratic form on 
$\Q^2$. It determines a $\Q$-form of the algebraic group $O(2)$,
and $H^1(\Gal(\Q),O(2,\overline\Q))$ classifies the
quadratic forms on $\Q^2$, while the kernel of

\centerline{$H^1(\Gal(\Q),O(2,\overline\Q))
\to H^1(\Gal(\overline \Q_{re}),O(2,\overline\Q))$}

\noindent classifies the positive definite quadratic forms
 on $\Q^2$. Thus the lemma follows from
Corollary \ref{classification}.

\end{proof}

The classification of positive definite
quadratic forms $q$ on $\Q^2$ is well
known. Up to conjugacy, $q$ can be written as

\centerline{$q(x,y)=ax^2+ ad y^2$,}

\noindent where $a, \,d$ are positive and $d$ is a square-free integer. Then $q$ is determined
by the following two invariants

(i) its discriminant $-d$, viewed as an element of $\Q^*/\Q^{*2}$,

(ii) the value $a$, viewed as an element in $\Q^*/ N_{K/\Q}(K^*)$, where $K=\Q(\sqrt{-d})$. Equivalently, this means that
$q(\Q^2\setminus 0)=a N_{K/\Q}(K^*)$.

For any positive definite quadratic forms $q$ on $\Q^2$, 
let $\xi(q)$ be the corresponding commensurable class
(or more precisely, the conjugacy class of the commensurable class).
By Theorem \ref{Main3}, $\cp\,\xi(q)$ only depends on $O(q)$, therefore
it only depends on the discriminant $-d$.

\bigskip
\noindent
{\it 10.3 The function $F(d)$}

\noindent
 Let $d$ be a positive square-free integer. Set
$K=\Q(\sqrt{-d})$, let ${\cal O}$ be its ring of integers,
let $R$ be te set of roots of unity in $K$ and
set $K_1=\{z\in K\vert z\overline z=1\}$. 
For $z\in K^*$, recall that the integer $d(z)$ is defined by
$d(z)=N_{K/\Q}(\pi_z)=\Card {\cal O}/\pi_z$,
where $\pi_z$ is the ideal 
$\pi_z=\{a\in {\cal  O}\vert az\in{\cal O}\}$.
Set

\centerline{$F(d)=\Min_{z\in K_1\setminus R}\,d(z)$.}

\noindent We will now show two formulas for $F(d)$.
Indeed $F(d)$ is the norm of some specific ideal
in $K=\Q(\sqrt{-d})$, and it is also the minimal
solution of some diohantine equation.

Let ${\cal J}$ be the set of all ideals $\pi$ of ${\cal O}$
such that $\pi$ and $\overline \pi$ are coprime and
$\pi^2$ is principal.

\begin{lemma} \label{F(d)1} We have 

\centerline{$F(d)=\Min_{\pi\in{\cal J}}\,N_{k/\Q}(\pi)$.}

In particular, we have $F(1)=5$ and $F(3)=7$.
\end{lemma}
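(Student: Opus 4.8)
The plan is to show the two quantities coincide by establishing inclusions between the relevant sets of values, and then to evaluate $F(1)$ and $F(3)$ by a short direct search.

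First I would unwind the definitions. For $z\in K_1\setminus R$, write $z=\pi_z^{-1}\cdot(\pi_z z)$, where $\pi_z$ is the integral ideal $\{a\in\mathcal O\mid az\in\mathcal O\}$; so $(z)=\mathfrak b\,\pi_z^{-1}$ for the integral ideal $\mathfrak b=\pi_z z$ (the numerator). Since $z\overline z=1$, taking ideals gives $\mathfrak b\,\overline{\mathfrak b}=\pi_z\,\overline{\pi_z}$. Because $\pi_z$ is by construction the ``denominator'' of $z$ in lowest terms, one checks that $\pi_z$ and $\mathfrak b$ are coprime, hence $\pi_z$ and $\overline{\pi_z}$ are coprime and $\mathfrak b=\overline{\pi_z}$. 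Therefore $(z)=\overline{\pi_z}\,\pi_z^{-1}$, and squaring, $(z)^2=\overline{\pi_z}^{\,2}\,\pi_z^{-2}$; but $(z)^2=(z^2)$ is principal while $\pi_z,\overline{\pi_z}$ are coprime, which forces $\pi_z^2$ (and $\overline{\pi_z}^{\,2}$) to be principal. Thus $\pi_z\in\mathcal J$, and $d(z)=N_{K/\Q}(\pi_z)$. This gives $F(d)\ge \Min_{\pi\in\mathcal J}N_{K/\Q}(\pi)$.

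For the reverse inequality, I would start from $\pi\in\mathcal J$: $\pi$ and $\overline\pi$ are coprime and $\pi^2=(\alpha)$ is principal for some $\alpha\in\mathcal O$. Then $\overline\pi^{\,2}=(\overline\alpha)$, and the ideal $\pi\overline\pi=(N_{K/\Q}(\pi))$ is a rational integer, so $(\alpha\overline\alpha)=\pi^2\overline\pi^{\,2}=(N_{K/\Q}(\pi))^2=(\alpha\overline\alpha$'s norm$)$ — more precisely $\alpha\overline\alpha$ generates the same principal ideal as the integer $N_{K/\Q}(\pi)^2$, so $\alpha\overline\alpha=u\,N_{K/\Q}(\pi)^2$ for a unit $u\in\mathcal O^*$; since $K$ is imaginary quadratic, $\mathcal O^*=R$, and replacing $\alpha$ by a unit multiple we may arrange $\alpha\overline\alpha=N_{K/\Q}(\pi)^2$ (here using that a root of unity times a positive rational is a norm only if the root of unity is $\pm1$; a small case check handles $d=1,3$ separately if needed). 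Set $z=\alpha/\overline\alpha\in K^*$. Then $z\overline z=1$, so $z\in K_1$. The ideal computation gives $(z)=(\alpha)(\overline\alpha)^{-1}=\pi^2\overline\pi^{-2}$, so the denominator ideal of $z$ is $\overline\pi^{\,2}$ — wait, that would give $d(z)=N_{K/\Q}(\pi)^2$, not $N_{K/\Q}(\pi)$. To fix this I instead take $z$ with $(z)=\overline\pi\,\pi^{-1}$: such a $z$ exists precisely because $\pi\overline\pi^{-1}\cdot(\text{its own conjugate})=\overline\pi\pi^{-1}\cdot\pi\overline\pi^{-1}=(1)$ and $\pi\overline\pi^{-1}$ is a fractional ideal of norm $1$; it is principal iff $\pi/\overline\pi$ is, which by $\pi^2$ principal is equivalent to $\overline\pi^{\,2}$ principal (automatic) — indeed $\pi\overline\pi^{-1}=\pi^2\cdot(\pi\overline\pi)^{-1}$ is principal. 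So pick a generator $z$ of $\overline\pi\,\pi^{-1}$; then $z\overline z$ generates $\overline\pi\pi^{-1}\cdot\pi\overline\pi^{-1}=(1)$, so $z\overline z\in R$, and after multiplying $z$ by a suitable root of unity we get $z\overline z=1$ exactly (for $d=1,3$ check that the available roots of unity suffice; for other $d$, $R=\{\pm1\}$ and $z\overline z\in\{\pm1\}$ is positive hence $=1$). Now $z\notin R$ since $\pi,\overline\pi$ are coprime and nontrivial, so $z\in K_1\setminus R$, and the denominator of $z$ in lowest terms is $\pi$, giving $d(z)=N_{K/\Q}(\pi)$. Hence $F(d)\le N_{K/\Q}(\pi)$ for every $\pi\in\mathcal J$, completing the equality.

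The main obstacle I anticipate is the bookkeeping of units and roots of unity: in $\Q(i)$ and $\Q(\sqrt{-3})$ the unit group is larger than $\{\pm1\}$, so I must check carefully that the normalization $z\overline z=1$ can be achieved without changing the denominator ideal, and that the coprimality of $\pi$ and $\overline\pi$ really is equivalent to $z$ being in lowest terms — this is where the argument could slip. Finally, for the numerical claims: in $\Q(i)$, the prime $2$ ramifies and $3$ is inert, so the smallest split rational prime is $5=(2+i)(2-i)$; the ideal $\pi=(2+i)$ has coprime conjugate, is principal (so $\pi^2$ is), hence $\pi\in\mathcal J$ and $F(1)\le 5$, while no smaller norm ideal works (norms $2,3,4$ come from ramified/inert primes whose conjugates are not coprime to them), giving $F(1)=5$. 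In $\Q(\sqrt{-3})$, $3$ ramifies and $2$ is inert, the smallest split prime is $7$, and $\pi=$ one of the primes above $7$ lies in $\mathcal J$, yielding $F(3)=7$ by the same norm-minimality check.
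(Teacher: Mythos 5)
Your argument is correct and takes essentially the same route as the paper, which simply asserts that $z\mapsto\pi_z$ induces a bijection $(K_1\setminus R)/R\simeq{\cal J}$ and (for the numerics) that $F(d)$ is the smallest split prime when $\Cl(K)$ is trivial; you have supplied the verification of that bijection in both directions. One small simplification you could make: since $z\overline z=|z|^2$ is a positive rational number, the condition that $z\overline z$ be a unit already forces $z\overline z=1$ for every $d$, so the case analysis on roots of unity for $d=1,3$ is unnecessary.
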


\begin{proof}
The map $z\mapsto \pi_z$ induces a bijection
$(K_1\setminus R)/R\simeq {\cal J}$, from which the first assertion follows. Moreover, if $\Cl(K)=\{0\}$, then $F(d)$ is the smallest
split prime number. Therefore $F(1)=5$ and $F(3)=7$.
\end{proof}

Let's consider the following diophantine equation

$({\cal E})$ 
\hskip1.5cm$4n^2=a^2+db^2$, with $n>0$, $a> 0$ and $b\neq 0$.

\noindent A solution $(n, a,b)$ of $({\cal E})$  is
called {\it primitive} if $\gcd (n,a)=1$. Let 
$\Sol({\cal E})$ (respectively $\Sol_{prim}({\cal E})$)
be the set of  solutions (respectively of primitive solutions)
of $({\cal E})$.

Let $\pi\in {\cal J}$. Since $\pi^2$ is principal, there are
integers  $a(\pi)>0$ and $b(\pi)$ such that
$a(\pi)+b(\pi)\sqrt{-d}$ is a generator of $4\pi^2$.
Moreover, let's assume that $d\neq 1$ or $3$. Then
$R=\{\pm 1\}$ and the integers $a(\pi)$ and $b(\pi)$
are uniquely determined. Thus there is a map
$\theta: {\cal J}\to \Sol({\cal E})$ defined by

\centerline{
$\theta(\pi)=(N_{K/\Q}(\pi),a(\pi), b(\pi))$.}

\begin{lemma}\label{F(d)2} Under the hypothesis that $d\neq 1$ or $3$,
the map $\theta$ induces a bijection from
${\cal J}$ to $\Sol_{prim}({\cal E})$. In particular

\centerline{$F(d)=\Min_{(n,a,b)\in \Sol({\cal E})}\,n$.}
\end{lemma}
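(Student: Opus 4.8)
The plan is to show directly that $\theta$ is a bijection, and then read off the displayed formula from Lemma~\ref{F(d)1} together with an elementary descent on $(\cE)$. Throughout I would use the standard facts that the nonzero integral ideals of $\mathcal{O}$ form the free commutative monoid on the primes, that complex conjugation carries $v_{\mathfrak{p}}(\overline{I})$ to $v_{\overline{\mathfrak{p}}}(I)$, that $\pi\overline{\pi}=(N_{K/\Q}(\pi))$, and that for $m\in\Z$ one has $\mathfrak{p}\mid(m)$ iff $p\mid m$ with $p=\mathfrak{p}\cap\Z$; the hypothesis $d\neq1,3$ enters only through $\mathcal{O}^{\times}=R=\{\pm1\}$, which is exactly what makes $a(\pi),b(\pi)$, hence $\theta$, well defined.

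First I would verify that $\theta$ takes values in $\Sol_{prim}(\cE)$. Write $\pi^{2}=(\delta)$ with $\delta$ chosen so that $a(\pi)=\delta+\overline{\delta}$ and $b(\pi)\sqrt{-d}=\delta-\overline{\delta}$; then $\tfrac14(a(\pi)^{2}+d\,b(\pi)^{2})=N_{K/\Q}(\delta)=N_{K/\Q}(\pi)^{2}$, which is $(\cE)$ with $n=N_{K/\Q}(\pi)>0$. Here $a(\pi)\neq0$: otherwise $\overline{\delta}=-\delta$, so $\overline{\pi^{2}}=\pi^{2}$, so $\overline{\pi}=\pi$, contradicting the coprimality of $\pi$ and $\overline{\pi}$ (note $\pi\neq\mathcal{O}$); hence the normalisation $a(\pi)>0$ is unambiguous, and $b(\pi)\neq0$ for the same reason. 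For primitivity, every prime $\mathfrak{p}\mid\pi$ is split (by coprimality of $\pi,\overline{\pi}$), so $\mathfrak{p}^{2}\mid\delta$ while $\mathfrak{p}\nmid\overline{\delta}$, whence $v_{\mathfrak{p}}(a(\pi))=v_{\mathfrak{p}}(\delta+\overline{\delta})=0$, i.e. $p\nmid a(\pi)$, while $p=N_{K/\Q}(\mathfrak{p})\mid N_{K/\Q}(\pi)=n$; since every rational prime dividing $n$ arises from such a $\mathfrak{p}$, this gives $\gcd(n,a(\pi))=1$.

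Injectivity is immediate: if $\theta(\pi)=\theta(\pi')$, then $\pi^{2}$ and $(\pi')^{2}$ are generated by the same element $\tfrac12(a(\pi)+b(\pi)\sqrt{-d})$, so $\pi^{2}=(\pi')^{2}$ and hence $\pi=\pi'$ by unique factorisation. For surjectivity, let $(n,a,b)\in\Sol_{prim}(\cE)$ and set $\delta=\tfrac12(a+b\sqrt{-d})$; since $\delta$ is a root of $t^{2}-at+n^{2}\in\Z[t]$ it is an algebraic integer, so $\delta\in\mathcal{O}$. Put $J=(\delta)$, so that $J\overline{J}=(n)^{2}$. The key point is that $J$ and $\overline{J}$ are coprime: a common prime $\mathfrak{p}$ would divide both $(\delta)+(\overline{\delta})$, which contains $a$, and $J\overline{J}=(n)^{2}$, forcing $p\mid\gcd(a,n)=1$, a contradiction. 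Consequently no inert or ramified prime divides $J$ (such a $\mathfrak{p}=\overline{\mathfrak{p}}$ would have $v_{\mathfrak{p}}(J)=v_{\mathfrak{p}}(\overline{J})$, necessarily $0$), and at each split $\mathfrak{p}$ the relations $v_{\mathfrak{p}}(J)+v_{\overline{\mathfrak{p}}}(J)=2v_{p}(n)$ and $\min(v_{\mathfrak{p}}(J),v_{\overline{\mathfrak{p}}}(J))=0$ force $v_{\mathfrak{p}}(J)$ to be even. Hence $J=\pi^{2}$ for an integral ideal $\pi$ supported on split primes with $\overline{\mathfrak{p}}\nmid\pi$ whenever $\mathfrak{p}\mid\pi$; thus $\pi$ and $\overline{\pi}$ are coprime, $\pi^{2}=(\delta)$ is principal, and $\pi\neq\mathcal{O}$ (as $b\neq0$), so $\pi\in\mathcal{J}$, with $N_{K/\Q}(\pi)=n$ and $\delta$ the generator of $\pi^{2}$ of positive first coordinate; therefore $\theta(\pi)=(n,a,b)$.

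Since $\theta$ is a bijection preserving the first coordinate, Lemma~\ref{F(d)1} yields $F(d)=\min_{\pi\in\mathcal{J}}N_{K/\Q}(\pi)=\min_{(n,a,b)\in\Sol_{prim}(\cE)}n$. It then remains to check that $\min_{\Sol(\cE)}n=\min_{\Sol_{prim}(\cE)}n$: given $(n,a,b)\in\Sol(\cE)$ and $g=\gcd(n,a)$, we have $g^{2}\mid d\,b^{2}$ (because $4n^{2}-a^{2}=d\,b^{2}$ and $g\mid n$, $g\mid a$), hence $g\mid b$ since $d$ is square-free, so $(n/g,a/g,b/g)\in\Sol_{prim}(\cE)$ with $n/g\leq n$. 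I expect the surjectivity step to be the only genuine difficulty: the crux is recognising that the primitivity condition $\gcd(n,a)=1$ is exactly what forces $(\delta)$ to be coprime to its conjugate, hence a perfect square of an ideal that is itself coprime to its conjugate; the remainder is bookkeeping with ideal factorisations, and $d\neq1,3$ serves only to pin down $a(\pi)+b(\pi)\sqrt{-d}$ up to sign.
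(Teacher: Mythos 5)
Your proof is correct and follows essentially the same route as the paper: verify $\theta$ lands in $\Sol_{prim}(\mathcal{E})$ by a coprimality argument, show surjectivity by factoring the principal ideal $(\delta)$ into an even power of a prime-to-conjugate ideal, and invoke Lemma~\ref{F(d)1}. You additionally spell out two steps the paper leaves implicit — the injectivity of $\theta$ and the descent showing $\min_{\Sol(\mathcal{E})} n = \min_{\Sol_{prim}(\mathcal{E})} n$ via division by $\gcd(n,a)$ — but the underlying argument is the same.
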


\begin{proof}

{\it Step 1: proof that 
$\theta({\cal J})\subset\Sol_{prim}({\cal E})$.}  An algebraic integer $z\in{\cal O}$ is called 
primitive if there are no integer $d>1$ such that
$z/d$ is an algebraic integer. Equivalently, there are 
no integer $d>1$ such that $d\mid z+\overline z$ and 
$d^2\mid z\overline z$.

Let $\pi\in {\cal J}$ and set $z=1/2(a(\pi)+b(\pi)\sqrt{-d})$.
Since $z+{\overline z}= a(\pi)$ and 
$z.{\overline z}=N_{K/\Q}(\pi^2)$, 
$z$ is an algebraic integer
which is a generator of $\pi^2$. Since $\pi^2$
and $\overline{\pi}^2$ are coprime, $z$ is primitive.
Since $z.{\overline z}=
N_{K/\Q}(\pi)^2$, it follows 
that $N_{K/\Q}(\pi)$ and $a(\pi)$ are coprime.
Hence $\theta(\pi)\in \Sol_{prim}({\cal E})$ and the claim is proved.

\noindent
{\it Step 2: proof that 
$\theta({\cal J})=\Sol_{prim}({\cal E})$.} Let
$(n,a,b)\in \Sol_{prim}({\cal E})$ and $z=1/2(a(\pi)+b(\pi)\sqrt{-d})$.
Since $z\neq \overline z$, $z+{\overline z}=a$ and
$z{\overline z}=n$, the  number $z$ is an algebraic integer.
Set $\tau=z{\cal O}$ and let

\centerline{$\tau=\pi_1^{m_1}\dots \pi_k^{m_k}$}

\noindent be the factorization of $\tau$ into a product of prime ideals of ${\cal O}$, where, as usual we assume that 
$\pi_i\neq \pi_j$ for $i\neq j$ and all $m_i$ are positive. 

For  $1\leq i\leq k$,  let $p_i$ be the characteristic of the field
${\cal O}/\pi_i$. Since $n$ and $a$ are
coprime, $\tau$ and $\overline{\tau}$ are coprime. It
follows that $\overline{\pi_i}$ does not divide $\tau$.
In particular $\pi_i\neq \overline{\pi_i}$ and
$N_{K/\Q}(\pi_i)=p_i$. Since $\pi_i$ and $\overline{\pi_i}$
are the only two ideals over $p_i$, we have
$m_i=v_{p_i} (N_{K/\Q}(\tau))=v_{p_i}(n^2)$. Since each
$m_i$ is even, we have $\tau=\pi^2$ for some ideal
$\pi\in{\cal J}$. Therefore $\theta(\pi)=(n,a,b)$,
and the claim is proved.

\noindent {\it Step 3.} It follows easily that $\theta$ is
a bijection  from
${\cal J}$ to $\Sol_{prim}({\cal E})$. In particular
$F(d)=\Min_{(n,a,b)\in \Sol_{prim}({\cal E})}\,n$, from which the lemma follows.

\noindent

\end{proof}

\bigskip
\noindent
{\it 10.4 Complexity computation}

\begin{thm} \label{Main4} Let $q$ be a positive definite quadratic form
on $\Q^2$ of discriminant $-d$. Then we have

\centerline{$\cp\,\xi(q)= F(d)^{e(\fn^\C)}$}
\end{thm}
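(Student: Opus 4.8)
The plan is to apply Theorem \ref{Main3} to the commensurable class $\xi=\xi(q)$, so that $\cp\,\xi=\Min_{h\in{\cal S}(\fn)}\he(h)$, where $\fn=\fn(\xi)$ is the canonical $\Q$-form attached to $q$. Since every $h\in{\cal S}(\fn)$ has the same height as its semisimple part (Lemma \ref{Chevalley}), it suffices to minimize over semisimple $h\in{\cal S}(\fn)$. The key structural input is that, modulo the unipotent radical ${\bf U}$, the automorphism group is the $\Q$-form $O(q)$ of $O(2)$; a semisimple $h\in\Aut\,\fn$ therefore projects to a semisimple element of $O(q)(\Q)$, and up to conjugacy we may assume $h$ lies in a maximal torus of $\Aut\,\fn$ lying over the (unique, up to conjugacy) maximal torus ${\bf H}$ of $O(q)$. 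The torus ${\bf H}(\Q)$ is the norm-one group $K_1=\{z\in K^*\mid z\bar z=1\}$, where $K=\Q(\sqrt{-d})$, and ${\bf H}$ acts on $\fn^\C=\oplus_k\,\fn^\C_k$ by the character $z\mapsto z$ on $\fn^\C_1$ (after extending scalars so the torus splits), hence by $z\mapsto z^k$ on $\fn^\C_k$.

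First I would make the height computation on such an $h$ explicit. Writing $h$ as the element of $K_1$ given by $z$, the eigenvalues of $h$ on $\fn^\C$ are the numbers $z^k$ (together with $\bar z^{\,k}=z^{-k}$, by property (ii) of the grading, $\dim\fn^\C_k=\dim\fn^\C_{-k}$), each with multiplicity $\dim\fn^\C_k$. By Lemma \ref{formula}, $\he(h)=\prod_k d(z^k)^{\dim\fn^\C_k}$. Now $d(z^k)=d(\bar z^{-k})$ since $\pi_{z^k}$ and $\pi_{\bar z^{-k}}$ have the same norm (complex conjugation is an automorphism of $K$), so pairing $k$ with $-k$ shows $\he(h)=\prod_{k>0}d(z^k)^{2\dim\fn^\C_k}$ up to the $k=0$ factor, which contributes $1$. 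Here I must be a little careful about whether $d(z^k)=d(z)^k$; in general $d$ is only submultiplicative, but the point is that for the \emph{minimizing} $z$ one can arrange $d(z^k)=d(z)^k$, or more directly one shows that $\he(h)\ge F(d)^{2e'}$ where $e'=\sum_{k>0}\dim\fn^\C_k$ is not quite the right exponent — so this bookkeeping needs attention, and I return to it below as the main obstacle.

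Second, I would handle the condition $h\in{\cal S}(\fn)$, i.e.\ $\Spec\,h|_{\fz^\C}$ contains no algebraic integer. Since the central part $\fz^\C$ inherits the grading and, because the grading is special, $\fz^\C\cap\fn^\C_0=0$, the eigenvalues of $h$ on $\fz^\C$ are among the $z^k$ with $k\neq 0$. The requirement that none of these be an algebraic integer translates, via $z\in K_1$, into: no power $z^k$ ($k\neq 0$) lies in ${\cal O}$; equivalently $z$ is not a root of unity, i.e.\ $z\in K_1\setminus R$. (If $z^k$ were an algebraic integer for some $k\ne0$, then since $z^k\bar z^{\,k}=1$ it would be a unit of ${\cal O}$ of norm $1$, hence a root of unity as $K$ is imaginary quadratic, forcing $z$ itself to be a root of unity.) Conversely any $z\in K_1\setminus R$ gives an $h\in{\cal S}(\fn)$. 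Thus $\cp\,\xi(q)=\Min_{z\in K_1\setminus R}\prod_{k>0}d(z^k)^{2\dim\fn^\C_k}$, and I need to identify this with $F(d)^{e(\fn^\C)}$ where $e(\fn^\C)=e(N)=\sum_{k>0}k\dim\fn^\C_k$.

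The hard part will be the last identification, reconciling the exponent $\sum_{k>0}k\dim\fn^\C_k$ (which comes from the paper's definition of $e(N)$, with the \emph{weight} $k$) against what the naive height bound gives (a sum of $\dim\fn^\C_k$'s, without the weight). The resolution is that $d$ is \emph{not} multiplicative but $d(z^k)$ does grow like $d(z)^k$ for ideals in ${\cal J}$: if $\pi_z=\pi$ with $\pi,\bar\pi$ coprime (which one shows is the case for the optimal $z$, using Lemma \ref{F(d)1}), then $\pi_{z^k}=\pi^k$ and $d(z^k)=N_{K/\Q}(\pi)^k=d(z)^k$. Hence for such $z$, $\he(h)=\prod_{k>0}d(z)^{2k\dim\fn^\C_k}$, and one must check $\sum_{k>0}2k\dim\fn^\C_k$ equals $e(\fn^\C)$ — which forces the normalization $e(\fn^\C)=\sum_{k\ne0}|k|\dim\fn^\C_k=2\sum_{k>0}k\dim\fn^\C_k$, consistent with the example $e(N_{112})=126$. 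For the lower bound, I would argue that any $z\in K_1\setminus R$ with $\he(h)$ minimal can be taken with $\pi_z\in{\cal J}$: a factor $\pi$ of $\pi_z$ with $\pi=\bar\pi$ would make $z\bar z=1$ fail to be a unit argument, and one reduces to the coprime case; then $N_{K/\Q}(\pi_z)\ge F(d)$ by Lemma \ref{F(d)1}, and submultiplicativity of $d$ together with $d(z^k)\ge N_{K/\Q}(\pi_z)^{?}$ — this is the delicate inequality — yields $\he(h)\ge F(d)^{e(\fn^\C)}$. Matching upper and lower bounds gives $\cp\,\xi(q)=F(d)^{e(\fn^\C)}$. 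The one genuinely fiddly point, which I expect to consume most of the work, is controlling $d(z^k)$ from below in terms of $N_{K/\Q}(\pi_z)$ when $\pi_z$ is not coprime to its conjugate — but since the minimum is attained in ${\cal J}$ this case only matters for the inequality direction and can be dispatched by a valuation count at each prime dividing $\pi_z\bar\pi_z$.
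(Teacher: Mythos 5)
Your overall strategy coincides with the paper's: reduce to Theorem \ref{Main3}, observe that ${\cal S}(\fn(\xi(q)))$ projects onto $K_1\setminus R$ in $O(q)(\Q)$ with $K=\Q(\sqrt{-d})$, compute $\he$ via Lemma \ref{formula}, and minimize to reach $F(d)$. But there is a concrete factor-of-two error in the height computation, and the way you try to absorb it by ``renormalizing'' $e(\fn^\C)$ contradicts the paper's definition. The product in Lemma \ref{formula} runs over $\Spec h/\Gal(\Q)$, i.e.\ over Galois \emph{orbits}, not over individual eigenvalues: as the proof of that lemma makes explicit, for a simple $\Q[h]$-module of dimension $n$ with conjugate eigenvalues $\lambda_1,\dots,\lambda_n$ one gets $\he(h)=d(\lambda_1)$, \emph{not} $\prod_i d(\lambda_i)$. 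Since $K$ is imaginary quadratic and $z\in K_1$, $\overline{z^k}=z^{-k}$, so $\{z^k,z^{-k}\}$ is a single Galois orbit whose eigenspaces $\fn^\C_k\oplus\fn^\C_{-k}$ form one isotypic $\Q[h]$-component; the representative $z^k$ has multiplicity $\dim\fn^\C_k$ and the orbit contributes $d(z^k)^{\dim\fn^\C_k}$ once, not $d(z^k)^{2\dim\fn^\C_k}$. Hence $\he\,\rho(z)=\prod_{k\geq 1}d(z^k)^{\dim\fn^\C_k}$, with the exponent matching the paper's $e(N)=\sum_{k>0}k\dim\fn^\C_k$; your guess that $e(\fn^\C)$ should be $\sum_{k\neq 0}|k|\dim\fn^\C_k$ is not what the paper defines and would give the wrong statement.

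Two smaller points, both making your life easier rather than harder. First, you do not need to restrict to a ``minimizing'' $z$ to get $d(z^k)=d(z)^k$: for \emph{any} $z\in K_1$, writing $z{\cal O}={\cal A}\bar{\cal A}^{-1}$ in lowest terms, the condition $z\bar z=1$ forces ${\cal A}$ and $\bar{\cal A}$ to be coprime (valuation count at each prime, using $\min(v_\pi({\cal A}),v_\pi({\cal B}))=0$ and ${\cal A}\bar{\cal A}={\cal B}\bar{\cal B}$), so $\pi_{z}=\bar{\cal A}$ lies in ${\cal J}$ automatically and $\pi_{z^k}=\pi_z^k$, whence $d(z^k)=d(z)^k$ unconditionally; your worry about submultiplicativity and the ``delicate inequality'' at primes dividing $\pi_z\bar\pi_z$ does not arise. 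Second, you should also verify, as the paper does in its Step~2, that the projection $\overline{\cal S}(\fn(\xi(q)))$ equals $K_1\setminus R$ exactly: the inclusion $\subset$ uses that roots of unity and reflections in $O(q)$ have finite order and hence algebraic-integer spectrum, while the reverse inclusion uses the special grading $\fz^\C=\oplus_{k\neq 0}\fz^\C_k$ so that the eigenvalues of $\rho(z)$ on the center are among $\{z^k:k\neq 0\}$, none of which is an algebraic integer when $z\in K_1\setminus R$. Once these points are fixed, the minimization $\Min_{z\in K_1\setminus R}d(z)=F(d)$ from Lemma~\ref{F(d)1} closes the argument exactly as the paper does.
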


\begin{proof} {\it Step 1.} 
Let $G\subset \End_\Q(K)$ be the group generated by
the multiplication by elements in $K_1$ and by the complex conjugation.  We have $G\simeq O(2)$ and 
$SO(2)\simeq K_1$. As a $O(q)$-module,
there is an isomorphism

\centerline{$V\simeq \Q(\sqrt{-d})$,}

\noindent where $V=\fn(\xi(q))/[\fn(\xi(q)),\fn(\xi(q))]$. 

\noindent {\it Step 2.} Let  ${\overline{\cal S}}(\fn(\xi(q))$ be 
the image of ${\cal S}(\fn(\xi(q))$ in $O(q)$. We claim
that

\centerline{${\overline{\cal S}}(\fn(\xi(q))=K_1\setminus R$.}

\noindent Indeed $O(q)$ can be identified with a Levi factor of
${\bf G}(\Q)$ and let $\rho:O(q)\to{\bf G}(\Q)$ a corresponding lift. Any element in $R\cup O(q)\setminus SO(q)$
has finite order, hence we have

\centerline{${\overline{\cal S}}(\fn(\xi(q))
\subset K_1\setminus R$.}

\noindent Let $z\in K_1\setminus R$.  It is clear that $z$ is
not an algebraic integer. Since the grading is special,
we have 

\centerline{$\fz^{\C}=\oplus_{k\neq 0} \fz^{\C}_k$.}

 \noindent Since the eigenvalues of  $\rho(z)$ on $\fz_k$ is $z^k$, it follows that $z$ belongs to
${\overline{\cal S}}(\fn(\xi(q))$, what proves the point.
 
\noindent {\it Step 3.} Let $z\in  K_1\setminus R$.
We have $\overline z=\overline z^{-1}$. Therefore by
Lemma \ref{formula} we have

\centerline {$\he\,\rho(z)=\prod_{k\geq 1} d(z^k)^{\dim\,\fn^\C_k}
=d(z)^{e(N)}$.}

\noindent Therefore Theorem 4 implies Theorem 5.
\end{proof}

Since $F(d)\geq {\sqrt{1+d}\over 2}$, it follows that

\begin{cor}\label{Ncomplexity} The group $N$ has infinite complexity.
\end{cor}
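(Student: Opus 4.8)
The plan is to deduce the statement directly from Theorem \ref{Main4}. Recall that $\cp\,N=\Max_\xi\,\cp\,\xi$, where $\xi$ runs over the commensurable classes of cocompact lattices in $N$, and that, up to conjugacy, these classes are in bijection with the positive definite quadratic forms $q$ on $\Q^2$ (Lemma of \S10.2). Such a $q$ is determined in particular by its discriminant $-d$, and writing $q=ax^2+ady^2$ shows that every positive square-free integer $d$ occurs as such a discriminant. By Theorem \ref{Main4} we have $\cp\,\xi(q)=F(d)^{e(N)}$, so it suffices to show that $F(d)^{e(N)}$ is unbounded as $d$ runs over the (infinitely many) positive square-free integers.

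First I would check that $e(N)\geq 1$. Indeed the grading $\fn^\C=\oplus_k\,\fn^\C_k$ of \S10.1 is special, so $\fz^\C\cap\fn^\C_0=0$; since $\fn$ is a nonzero nilpotent Lie algebra, $\fz^\C\neq 0$, hence $\fn^\C\neq\fn^\C_0$ and the grading is non-trivial. By property (ii) of \S10.1 one has $\dim\fn^\C_k=\dim\fn^\C_{-k}$, so there is some $k>0$ with $\fn^\C_k\neq 0$, and therefore $e(N)=\sum_{k>0}k\dim\fn^\C_k\geq 1$.

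Next I would bound $F(d)$ from below. For square-free $d$ with $d\neq 1,3$, Lemma \ref{F(d)2} identifies $F(d)$ with the least $n>0$ for which the equation $({\cal E})$, namely $4n^2=a^2+db^2$ with $a>0$ and $b\neq 0$, has a solution; since any such solution has $a^2\geq 1$ and $b^2\geq 1$, we get $4n^2\geq 1+d$, that is $F(d)\geq\tfrac12\sqrt{1+d}$. (For $d=1,3$, Lemma \ref{F(d)1} gives $F(1)=5$ and $F(3)=7$, so these cases do not affect the asymptotics.) Hence $F(d)\to\infty$ as $d\to\infty$ through square-free integers, and since $e(N)\geq 1$ the complexities $\cp\,\xi(q)=F(d)^{e(N)}\geq F(d)$ are unbounded. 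Therefore $\cp\,N=\infty$.

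I do not expect any genuine obstacle: the corollary is a formal consequence of Theorem \ref{Main4} together with the elementary inequality $F(d)\geq\tfrac12\sqrt{1+d}$. The only point that deserves an explicit line is the verification that the special grading of $\fn^\C$ is non-trivial, so that the exponent $e(N)$ is at least $1$ (and in particular nonzero); everything else is immediate.
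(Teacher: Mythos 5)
Your proof is correct and follows essentially the same route as the paper: the paper deduces the corollary in one line from the inequality $F(d)\geq\frac{\sqrt{1+d}}{2}$ (which, as you note, is immediate from Lemma \ref{F(d)2} since any solution of $4n^2=a^2+db^2$ has $a^2\geq 1$ and $b^2\geq 1$), together with Theorem \ref{Main4}. You fill in one small point the paper leaves implicit, namely that the exponent $e(N)$ is at least $1$ because the special grading of $\fn^\C$ is non-trivial; this is a worthwhile sanity check but does not change the substance of the argument.
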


Since $F(7)=F(15)=2$ and $F(d)\geq 3$ otherwise, it follows that

\begin{cor}\label{mincp} If the positive definite quadratic form 
$q$ has  discriminant $-7$ or $-15$ we have

\centerline{$\cp\,\xi(q)= 2^{e(N)}$,}

\noindent and $\cp\,\xi(q)\geq 3^{e(N)}$ otherwise.
\end{cor}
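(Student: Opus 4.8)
The plan is to reduce the statement entirely to the determination of the number-theoretic quantity $F(d)$ and then invoke Theorem \ref{Main4}. If $q$ is a positive definite quadratic form on $\Q^2$ of discriminant $-d$, then $d$ is a positive square-free integer, and Theorem \ref{Main4} gives $\cp\,\xi(q) = F(d)^{e(\fn^\C)}$, where $e(\fn^\C)$ is the invariant $e(N) = \sum_{k>0} k\dim\fn^\C_k$ of \S 10.1 (a positive integer, the grading of $\fn^\C$ being non-trivial). Thus the corollary will follow once the two purely arithmetic facts
\[ F(7) = F(15) = 2 \qquad\text{and}\qquad F(d)\ge 3 \text{ for every positive square-free } d\notin\{7,15\} \]
are established.

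For the two exceptional discriminants $d=1$ and $d=3$, Lemma \ref{F(d)1} already gives $F(1)=5$ and $F(3)=7$, both at least $3$. For every other value of $d$, i.e. $d$ positive, square-free and $d\ne 1,3$, Lemma \ref{F(d)2} applies and identifies $F(d)$ with the least $n$ occurring in a solution $(n,a,b)$ of the diophantine equation $({\cal E})$: $4n^2 = a^2 + db^2$, $n>0$, $a>0$, $b\ne 0$. I would then run the elementary case analysis on small $n$. For $n=1$ the equation reads $db^2 = 4 - a^2 \le 3$ with $b\ne 0$, forcing $|b|=1$ and $d\le 3$; as $d$ is square-free and $d\ne 1,3$ this leaves only $d=2$, where $a^2=2$ is impossible, so $({\cal E})$ has no solution with $n=1$. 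For $n=2$ the equation reads $a^2 + db^2 = 16$ with $a\in\{1,2,3\}$, i.e. $db^2\in\{15,12,7\}$; inspecting square-free $d$ one finds the only triples are $(a,b,d)=(1,1,15)$, $(a,b,d)=(2,2,3)$ (discarded, $d=3$), and $(a,b,d)=(3,1,7)$. Hence for square-free $d\notin\{3,7,15\}$ there is no solution with $n\le 2$, so $F(d)\ge 3$; whereas $(n,a,b)=(2,3,1)$ solves $({\cal E})$ for $d=7$ and $(n,a,b)=(2,1,1)$ solves it for $d=15$, so $F(7),F(15)\le 2$, and combined with the non-existence of an $n=1$ solution this gives $F(7)=F(15)=2$.

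Substituting these values into $\cp\,\xi(q)=F(d)^{e(N)}$ then yields $\cp\,\xi(q)=2^{e(N)}$ when $d\in\{7,15\}$ and $\cp\,\xi(q)=F(d)^{e(N)}\ge 3^{e(N)}$ otherwise, which is exactly the claim. I do not foresee a genuine obstacle here: the computation is entirely elementary, and the one point that needs attention is that Lemma \ref{F(d)2} must be replaced by Lemma \ref{F(d)1} when $d=1$ or $d=3$ — indeed for $d=3$ the triple $(1,1,1)$ solves $({\cal E})$ but corresponds to the sixth root of unity $\tfrac12(1+\sqrt{-3})$, which lies in $R$ and is therefore excluded from the definition of $F(3)$, precisely the reason those two discriminants are set aside in Lemma \ref{F(d)2}.
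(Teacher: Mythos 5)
Your proof is correct and follows the same route as the paper: the paper simply asserts the arithmetic facts $F(7)=F(15)=2$ and $F(d)\ge 3$ otherwise in the line preceding the corollary and then applies Theorem \ref{Main4}. You supply the elementary verification of those facts via Lemma \ref{F(d)2} (inspecting $n=1,2$ in the equation $4n^2=a^2+db^2$) together with Lemma \ref{F(d)1} for $d=1,3$, which is exactly what is needed and correctly carried out.
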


\end{document}